\newcommand{\re}{{\text{\rm re}}}
\newcommand{\im}{{\text{\rm im}}}
\newcounter{thecounter}
\numberwithin{thecounter}{section}
\newtheorem{lemma}[thecounter]{Lemma}
\newtheorem{proposition}[thecounter]{Proposition}
\newtheorem{theorem}[thecounter]{Theorem}
\newtheorem{thm}[thecounter]{Theorem}
\newtheorem{corollary}[thecounter]{Corollary}
\newcommand{\R}{{\mathbb{R}}}
\newcommand{\C}{{\mathbb{C}}}
\newcommand{\Z}{{\mathbb{Z}}}
\newcommand{\N}{{\mathbb{N}}}
\DeclareMathOperator{\rank}{rank}
\DeclareMathOperator{\vspan}{span}
\DeclareMathOperator{\diag}{diag}
\def\a{\alpha}
\def\b{\beta}
\def\g{\gamma}
\def\d{\delta}
\newcommand{\realroots}{\Delta^\text{re}}
\begin{document}

\title{Commutator relations and structure constants for \\rank 2 Kac--Moody algebras}

\author{Lisa Carbone, Matt Kownacki, Scott H.\ Murray  and Sowmya Srinivasan}

\begin{abstract} We completely determine the structure constants between real root vectors in a rank 2 Kac--Moody algebra $\mathfrak{g}$. Our description is computationally efficient,  even in the rank 2  hyperbolic case where the coefficients of roots on the root lattice grow exponentially with height. Our approach is to extend Carter's method of finding structure constants from those on extraspecial pairs  to the rank 2 Kac--Moody case. We also determine all commutator relations involving only real root vectors in all rank 2 Kac-Moody algebras. 
The generalized Cartan matrix of $\mathfrak{g}$ is of the form
$H(a,b)= \left(\begin{smallmatrix} ~2 & -b\\
-a & ~2 \end{smallmatrix}\right)$ where $a,b\in\mathbb{Z}$ and $ab\geq 4$.  If $ab=4$, then $\mathfrak{g}$ is of affine type. If $ab>4$, then $\mathfrak{g}$ is of hyperbolic type.
Explicit knowledge of the root strings is needed, as well as a characterization of the pairs of real roots whose sums are real. We prove  that if $a$ and $b$ are both greater than one, then no sum of real roots can be a real root. We determine the root strings between real roots $\b,\g$ in $H(a,1)$, $a\geq 5$ and we determine the sets $(\mathbb{Z}_{\geq 0}\alpha+\mathbb{Z}_{\geq 0}\beta)\cap\Delta^{\re}(H(a,b))$. One of our tools is a characterization of the root subsystems generated by a subset of roots. We classify these subsystems in rank 2 Kac--Moody root systems.  We prove that every rank two infinite root system contains an infinite family of non-isomorphic symmetric rank 2 hyperbolic root subsystems $H(k,k)$ for certain $k\geq 3$, generated by either two short or two long simple roots. We also prove that a non-symmetric hyperbolic root systems $H(a,b)$ with $a\ne b$ and $ab>5$ also contains an infinite family of non-isomorphic non-symmetric rank 2 hyperbolic root subsystems $H(a\ell,b\ell)$, for certain positive integers $\ell$.

\end{abstract}
\thanks{This research made extensive use of the Magma computer algebra system.}
\thanks{2000 Mathematics subject classification. Primary  81R10; Secondary  17B67}
\thanks{The research of the first author is partly supported by  Simons Foundation Collaboration Grant, no. 422182.}

\maketitle

\section{Introduction}

An open question in the theory of Kac--Moody Lie algebras is to determine the infinite dimensional analog of a Chevalley basis. This task involves an explicit determination of commutator relations and structure constants, which is non-trivial, even in the smallest cases of rank 2. 

Furthermore, the question of determining the commutator relations between real root vectors $x_{\alpha}$ and $x_{\beta}$ corresponding to real roots $\alpha$ and $\beta$ may be reduced to the rank 2 root subsystem generated by $\alpha$ and $\beta$, where the commutator $[x_{\alpha},x_{\beta}]$ is trivial if the sum $\alpha+\beta$ is not a root and otherwise lies in the root space corresponding to $\alpha+\beta$. Thus an explicit knowledge of the commutator relations and structure constants for rank 2 Kac--Moody algebras is essential to the development of the notion of a Chevalley basis in a general Kac-Moody algebra $\mathfrak{g}$. This observation provides the setting for the current work.

Let $\Delta$ be the root system of a rank 2 Kac--Moody algebra $\mathfrak{g}$. Then $\Delta$ has generalized Cartan matrix 
$H(a,b):= \left(\begin{smallmatrix} ~2 & -b\\
-a & ~2 \end{smallmatrix}\right)$
for some $a,b\in\mathbb{Z}$. Let $S=\{\alpha_1,\alpha_2\}$ denote a basis of simple roots of $\Delta$.
If $a\neq b$, then $\Delta$ is non-symmetric and its basis consists of a long simple root and a short simple root, whereas if  $a= b$,  $\Delta$ is symmetric and its basis consists of two long simple roots. 

The generalized Cartan matrices $H(2,2)$ and $H(4,1)$ are of affine types $A_1^{(1)}$ and $A_2^{(2)}$ respectively. The remaining generalized Cartan matrices $H(a,b)$ with $ab> 4$ are all of hyperbolic type. 

The root system $\Delta$ contains two types of roots: real and imaginary. The real roots of $\Delta$ are of the form $w\alpha_i$ for some $w\in W$, where $W$ is the Weyl group of the root system. For all rank 2 Kac--Moody algebras $W\cong D_{\infty}$,  the infinite dihedral group. The additional imaginary roots will not play a significant role in this work. The real roots are supported on the branches of a hyperbola in $\R^{(1,1)}$, with a pair of branches for each root length (Figure 1).

In this paper, we completely determine the structure constants between real root vectors in rank 2 Kac--Moody algebras $\mathfrak{g}$. Our description is computationally efficient,  even in the rank 2  hyperbolic cases where the coefficients of roots grow exponentially with increasing height. 

In order to determine the Lie algebra commutators, it is  necessary to characterize the set  of real roots that are positive $\Z$--linear combinations $(\mathbb{Z}_{>0}\alpha+\mathbb{Z}_{>0}\beta)\cap\Delta^{\re}(H(a,b))$,  for  real roots $\alpha,\beta$. 
We obtain an explicit description of this set for all rank 2 Kac--Moody root systems.
A similar question was answered by Billig and Pianzola ([BP]) in less explicit form for arbitrary Kac--Moody root systems. 

Our main aim is to determine all structure constants involving only real roots. As in the finite dimensional case, we find 
systems of root vectors $\{x_{\a}\mid\a\in\Delta^\re\}$ with
$$[x_{\a},x_{\b}]= n_{\a,\b} x_{\a+\b} $$
where $n_{\a,\b}=s_{\a,\b} (p_{\a,\b}+1)$ for some $s_{\a,\b}\in\{+1,-1\}$, for all $\a,\b\in\Delta^\re$ with $\a+\b$ not an imaginary root.
The constants $p_{\a,\b}$ are determined by the root strings containing real roots, which we also determine. Our main theorem is the following (with notation as in Section~\ref{realroots}):
\begin{theorem}\label{T-sc} Let $\mathfrak{g}=\mathfrak{g}(\Delta)$ be an infinite dimensional rank 2 Kac--Moody algebra. Then the positive real roots can be enumerated as $$\{\alpha^{LL}_j ,\alpha^{LU}_j,
\alpha^{SU}_j, \alpha^{SL}_j  \mid j\in\Z,\; j\ge 0\},$$ so that:
\begin{enumerate}
\item\label{T-sc-triv} If $\mathfrak{g}$ has type $A_1^{(1)}=H(2)$, $H(m)$ for $m\geq 3$, or $H(a,b)$ for $ab>4$ with $a,b\neq 1$,  then the sum of two real roots is never a real root. Hence $n_{\a,\b}=0$ for all for $\a,\b\in\Delta^\re$ with $\a+\b\notin\Delta^\im$..
\item\label{T-sc-Ha1} Suppose $\mathfrak{g}$ has type $H(a,1)$ with $a\geq 5$. Let $s^{UL}:= s_{\a^{SU}_0,\a^{LL}_{0}}$, and let
\begin{align*}
s^U_j&:=s_{\a^{SU}_j,\a^{SU}_{j+1}}, & s^L_j:=s_{\a^{SL}_j,\a^{SL}_{j+1}}&&\text{for $j\ge0$,}
\end{align*}
be the signs of structure constants corresponding to short positive real roots whose sums are the long positive real roots $\a^{LU}_j$, 
$\a^{LL}_{j+1}$ respectively. These signs can each be chosen arbitrarily in $\{+1,-1\}$. 
This choice determines all structure constants $n_{\a,\b}$ for $\a,\b\in\Delta^\re$ with $\a+\b\in\Delta^\re$.

\item\label{T-sc-H41} Suppose $\mathfrak{g}$ has type $A_2^{(2)}=H(4,1)$. Let
\begin{align*}
s^U_{0,k}&:=s_{\a^{SU}_0,\a^{SU}_{2k+1}},& s^L_{0,k}&:=s_{\a^{SL}_0,\a^{SL}_{2k+1}} &&\text{for $k\ge0$,}\\
s^U_{1,k}&:=-s_{\a^{SU}_1,\a^{SU}_{2k}},    & s^L_{1,k}&:=-s_{\a^{SL}_1,\a^{SL}_{2k}} &&\text{for $k\ge1$,}\\
s^U_{2,k}&:=s_{\a^{SU}_2,\a^{SU}_{2k-1}},    & s^L_{2,k}&:=s_{\a^{SL}_2,\a^{SL}_{2k-1}} &&\text{for $k\ge2$.}
\end{align*}
These signs can each be chosen arbitrarily in $\{+1,-1\}$. 
This choice determines all structure constants $n_{\a,\b}$ for $\a,\b\in\Delta^\re$ with $\a+\b\in\Delta^\re$.
\end{enumerate}
\end{theorem}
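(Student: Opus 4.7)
The plan is to handle the three cases in turn, deducing (i) from a result already stated in the paper and treating (ii) and (iii) by an adaptation of Carter's extraspecial pair method to the rank 2 Kac--Moody setting.

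For part (i), I would argue directly from the earlier claim that no sum of two real roots is real when $a, b > 1$. If $\alpha + \beta \notin \Delta^\im$, then by that claim $\alpha + \beta \notin \Delta^\re$ either, so $\alpha + \beta \notin \Delta$ and the $\Z$-grading of $\mathfrak{g}$ forces $[x_\alpha, x_\beta] = 0$ and hence $n_{\alpha, \beta} = 0$. The type $H(2) = A_1^{(1)}$ is handled by the direct observation that the only sums of real roots that are roots are imaginary multiples of $\alpha_1 + \alpha_2$.

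For parts (ii) and (iii), the first step is to invoke the earlier description of $S(a, b)$ to determine exactly which pairs $(\alpha, \beta)$ of positive real roots satisfy $\alpha + \beta \in \Delta^\re$. For $H(a, 1)$ with $a \geq 5$, only pairs of short roots that sum to long roots contribute, and the root strings are short and explicitly enumerable. For $H(4, 1)$, the set $S(4, 1)$ is larger due to the affine-type structure, producing the more intricate enumeration in (iii). Next I would fix an ordering on $\Delta^\re \cap (\Z_{\geq 0} \alpha_1 + \Z_{\geq 0} \alpha_2)$ compatible with the indexing $\alpha^{LL}_j, \alpha^{LU}_j, \alpha^{SU}_j, \alpha^{SL}_j$, and for each positive real $\gamma = \alpha + \beta$ with $\alpha, \beta$ both positive real pick the extraspecial decomposition in which $\alpha$ is minimal. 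The signs listed in (ii) and (iii) are precisely those of the extraspecial pairs.

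The heart of the proof is then to show that (a) these signs may be chosen freely in $\{+1, -1\}$, and (b) every other structure constant is determined by them. For (b), signs propagate via skew-symmetry $n_{\beta, \alpha} = -n_{\alpha, \beta}$, a fixed convention for $n_{-\alpha, -\beta}$, and Jacobi identities: any non-extraspecial decomposition $\gamma = \alpha' + \beta'$ can be written with $\alpha' = \alpha + \delta$ for some $\delta \in \Delta^\re$, and applying Jacobi to $x_\alpha, x_\delta, x_{\beta'}$ expresses $n_{\alpha', \beta'}$ in terms of signs already fixed. For (a), I would verify consistency: two distinct chains of Jacobi identities leading to the same $n_{\alpha', \beta'}$ must agree. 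The explicit root-string information from earlier sections bounds the Jacobi quadruples $(\alpha, \beta, \gamma, \delta)$ with $\alpha + \beta + \gamma + \delta = 0$ that must be checked. The main obstacle will be the $A_2^{(2)}$ case: in $H(4, 1)$ several real roots admit multiple decompositions into sums of real roots, so the Jacobi relations impose more constraints, and the tripartite indexing $s^U_{0,k}, s^U_{1,k}, s^U_{2,k}$ reflects a residue structure modulo the affine period that must be tracked carefully in order to isolate which signs remain genuinely free and to rule out any hidden Jacobi relation that would collapse them further.
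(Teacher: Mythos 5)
Your part (i) matches the paper: it follows from Proposition~\ref{sumbne1} exactly as you describe. Your framework for (ii) — extraspecial pairs with respect to an ordering, propagation by skew-symmetry and the triple (Jacobi) relation of Lemma~\ref{lengths}, and the observation that in $H(a,1)$, $a\ge5$, each decomposable root has a unique special pair — is also essentially the paper's route, although note that \emph{freeness} of the extraspecial signs is not established by checking consistency of Jacobi chains; the paper proves it by an explicit base-change argument (flipping the signs of $x^{SU}_k$ for $k\le j$ and $x^{SL}_k$ for $k\ge -j-1$ flips exactly the one sign $s^U_j$), and you would need some such rescaling argument to show the signs are genuinely independent rather than merely not visibly contradictory.

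The genuine gap is in part (iii). Your plan is to relate the several decompositions of a given long root by Jacobi identities on quadruples $\a+\b+\g+\d=0$, bounding the quadruples using the root-string data. But in $H(4,1)$ the distinct decompositions $\a^{LU}_k=\a^{SU}_j+\a^{SU}_{2k+1-j}$ differ by multiples of the null root: $\a^{SU}_{j+1}-\a^{SU}_j=\a_1+2\a_2\in\Delta^{\im}$, and more generally every cross-sum $\a^{SU}_j-\a^{SU}_{2k-j'}$ is an integral multiple of $\a_1+2\a_2$. Hence the hypothesis of Lemma~\ref{lengths}\ref{len-quadruple} — that every pairwise sum of the quadruple be a \emph{real} root — fails for every relevant quadruple, and the intermediate brackets land in imaginary root spaces where there is no $n_{\a,\b}x_{\a+\b}$ formalism (and where the root spaces need not be one-dimensional). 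This is exactly the obstruction the paper flags when it says the finite-dimensional method ``does not generalize.'' The paper circumvents it by constructing $A_2^{(2)}$ explicitly as a twisted loop algebra $\widetilde{\mathcal{L}}(L^0)$, reading off one concrete system of structure constants, and then classifying all achievable sign systems as the image of the base-change map $\phi:\mathcal{R}\to\mathcal{S}$, with the free signs identified via $\ker\phi$. Without some substitute of this kind — an explicit model, or a direct computation of the brackets through the imaginary root spaces — your proposed Jacobi propagation cannot link the decompositions $(\a^{SU}_0,\a^{SU}_{2k+1})$, $(\a^{SU}_1,\a^{SU}_{2k})$, $(\a^{SU}_2,\a^{SU}_{2k-1})$ of $\a^{LU}_k$, which is precisely the content of the sign relations in statement (iii).
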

See Propositions~\ref{P-allsc-triv}, \ref{P-allsc-Ha1} and~\ref{P-allsc-H41} for explicit formulas for all structure constants involving only real roots.
Case \ref{T-sc-triv} is easy to see for the symmetric cases $A_1^{(1)}$ and $H(m)$, and  was
observed by Morita (\cite{Mor}) without proof for the non-symmetric cases $H(a,b)$.
The twisted affine case \ref{T-sc-H41} is subtle, since there exist distinct pairs of real roots $(\a,\b)$ and $(\a',\b')$ with $\a+\b=\g=\a'+\b'$ and  $\g$ is a real root.  The sign of one such pair will  determine the sign of the other, but the method of proof used for finite-dimensional semisimple algebras does not generalize.

In order to make our results precise, we use two different concepts of a subsystem generated by a subset  $\Gamma$ of real roots: namely a subsystem $\Phi(\Gamma)$,  corresponding to a reflection subgroup of the Weyl group and consisting entirely of real roots; and $\Delta(\Gamma)$ consisting of all roots that can be written as an integral linear combination of elements of $\Gamma$. Such a $\Delta(\Gamma)$ subsystem contains both real and imaginary roots and corresponds to a  subalgebra of the Kac--Moody algebra.

We have classified both kinds of subsystem inside a rank 2 infinite root system, and found that the two concepts of subsystem are equivalent in almost all cases:
\begin{thm}\label{infshortrts}
Let $\Delta$ be a  rank 2 infinite root system and let $\Gamma$ be a set of real roots which generate $\Delta$, that is, $\Delta(\Gamma)=\Delta$.
Then either $\Phi(\Gamma)$ is the set of all real roots in $\Delta$ or it is the set of all \emph{short} real roots in $\Delta$.
The second case occurs only if $a=1$ or $b=1$ and $\Phi(\Gamma)$ consists of short roots. In particular, if $a=b$ then $\Delta^\re(\Gamma)=\Phi(\Gamma)$.
\end{thm}

Our classification also gives us the following result, which holds for either concept of subsystem:
\begin{thm}\label{infsubs} 
If $\Delta$ is a rank 2 hyperbolic root system, then $\Delta$ contains symmetric rank 2 hyperbolic root subsystems of type $H(k,k)$ for infinitely many distinct $k\geq 3$. 
If $\Delta$ is non-symmetric of type $H(a,b)$, then it
also contains non-symmetric rank 2 hyperbolic root subsystems of type $H(a\ell,b\ell)$ for infinitely many distinct $\ell\geq 2$.
\end{thm}
Kim and Lee also obtained an embedding theorem for rank 2 symmetric Kac--Moody algebras using different methods ([KL], Theorem 5.5).

We also classify the rank 2 $\Phi$-subsystems as finite, affine or hyperbolic
systems:
\begin{thm}\label{infbytype}
Let $\Delta$ be a rank 2 root system and let $\Gamma$ be a nonempty set of real roots in $\Delta$.
\begin{enumerate}
\item If $\Delta$ is finite, then $\Phi(\Gamma)$ is finite.
\item If $\Delta$ is affine of type $\widetilde{A}_1$, then $\Phi(\Gamma)$ has finite type $A_1$ or affine type $\widetilde{A}_1$.
\item If $\Delta$ is affine of type $\widetilde{A}_2^{(2)}$, then $\Phi(\Gamma)$ has finite type $A_1$, or affine type $\widetilde{A}_1$ or $\widetilde{A}_2^{(2)}$.
\item If $\Delta$ is hyperbolic, then $\Phi(\Gamma)$ has finite type $A_1$ or hyperbolic type.
\end{enumerate}
\end{thm}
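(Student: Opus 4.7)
The plan is to reduce Theorem~\ref{infbytype} to a single Gram-determinant computation. Part (i) is immediate: a finite root system has a finite Weyl group, so any reflection subgroup and its orbit of real roots are finite, and thus $\Phi(\Gamma)$ is finite.

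For parts (ii)--(iv) let $W$ denote the Weyl group of $\Delta$ (infinite dihedral in all three cases), and set $W':=\langle r_\gamma:\gamma\in\Gamma\rangle$, so that $\Phi(\Gamma)$ is the set of real roots whose reflections lie in $W'$. A first step is to note that any reflection subgroup of $D_\infty$ has order $1$, $2$, or is itself infinite dihedral; this follows from the semidirect product structure $D_\infty\cong\Z\rtimes\Z/2$, which forces the product of two distinct reflections to have infinite order. Since $\Gamma$ is nonempty, $W'$ is nontrivial; if $|W'|=2$ then $\Phi(\Gamma)$ has type $A_1$, and otherwise $W'$ is infinite dihedral with a pair of canonical simple reflections $r_{\alpha'},r_{\beta'}$ for some linearly independent real roots $\alpha',\beta'$. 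In the latter case $\Phi(\Gamma)$ is a rank $2$ root system with Cartan matrix $H(a',b')$, and its type is controlled by $a'b'$: finite if $a'b'<4$, affine if $a'b'=4$, hyperbolic if $a'b'>4$.

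The main computation is to express $a'b'$ via the invariant symmetric form $(\cdot,\cdot)$ on the root lattice of $\Delta$. From the definition of the Cartan integers,
\[
a'b'=\frac{4(\alpha',\beta')^2}{(\alpha',\alpha')(\beta',\beta')}.
\]
Writing $\alpha'=p\alpha_1+q\alpha_2$ and $\beta'=r\alpha_1+s\alpha_2$ in the basis of simple roots of $\Delta$, the Gram matrix of $\{\alpha',\beta'\}$ equals $MGM^T$, where $M=\bigl(\begin{smallmatrix}p&q\\r&s\end{smallmatrix}\bigr)$ and $G$ is the Gram matrix of $\{\alpha_1,\alpha_2\}$. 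Taking determinants gives
\[
(\alpha',\alpha')(\beta',\beta')-(\alpha',\beta')^2=(ps-qr)^2\det G.
\]
Using the standard symmetrizer $D=\mathrm{diag}(a,b)$ yields $G=\bigl(\begin{smallmatrix}2a&-ab\\-ab&2b\end{smallmatrix}\bigr)$ and hence $\det G=ab(4-ab)$. Since $\alpha',\beta'$ are linearly independent, $(ps-qr)^2>0$, so the signs of $\det G$ and of $(\alpha',\alpha')(\beta',\beta')-(\alpha',\beta')^2$ agree. Consequently in the affine cases this common sign is zero, forcing $a'b'=4$, while in the hyperbolic case it is negative, forcing $a'b'>4$.

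To finish I enumerate the affine possibilities. In (ii), every real root of $\widetilde{A}_1=H(2,2)$ has the same length, so $(\alpha',\alpha')=(\beta',\beta')$ forces $a'=b'$, hence $(a',b')=(2,2)$ and $\Phi(\Gamma)\cong\widetilde{A}_1$. In (iii), $\widetilde{A}_2^{(2)}=H(4,1)$ has two real-root lengths: if $\alpha',\beta'$ share a length then once again $(a',b')=(2,2)$ and $\Phi(\Gamma)\cong\widetilde{A}_1$, while if their lengths differ then $\{a',b'\}=\{1,4\}$ and $\Phi(\Gamma)\cong\widetilde{A}_2^{(2)}$. The main technical obstacle will be the careful justification of the canonical simple roots $\alpha',\beta'$ of $\Phi(\Gamma)$ as a rank $2$ subsystem inside $\Delta$ (including the choice of signs so that the Cartan integers $\langle\alpha',\beta'^\vee\rangle$ and $\langle\beta',\alpha'^\vee\rangle$ are both nonpositive); once this Coxeter-theoretic setup is in place the Gram-determinant step is routine.
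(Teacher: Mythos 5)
Your argument is correct, and it takes a genuinely different route from the paper. The paper deduces this theorem from its explicit classification of $\Phi$-subsystems (Proposition~\ref{Iclass} and Theorem~\ref{subsys}): every rank-$2$ subsystem has Cartan matrix $H(\delta_d,\delta_d)$ or $H(a\epsilon_d,b\epsilon_d)$, and one then evaluates the recursions of Lemma~\ref{acp} to get $\delta_d=2$ and $\epsilon_d=1$ when $ab=4$ (yielding exactly $\widetilde{A}_1$ and $\widetilde{A}_2^{(2)}$), and $\delta_d\ge ab-2>2$, $\epsilon_d\ge1$ when $ab>4$ (yielding only hyperbolic types). You instead bypass the classification with a signature argument: the Gram determinant of any linearly independent pair of real roots is $(ps-qr)^2\det G$, so $a'b'-4$ automatically carries the sign forced by the ambient type, and the length comparison then separates the two affine subcases. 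Your route is more conceptual and shows directly that a rank-$2$ reflection subsystem inherits the definiteness type of the ambient invariant form (it would apply verbatim to rank-$2$ reflection subsystems of any symmetrizable system), but it leans on the Coxeter-theoretic fact that an infinite dihedral reflection subgroup of $W$ admits canonical simple roots with nonpositive mutual Cartan integers --- precisely the step you flag as the ``main technical obstacle.'' Within this paper that input is supplied concretely by Theorem~\ref{subsys} and Table~\ref{T-Phisubs}, so your Gram-determinant computation could be grafted onto it with no further work; conversely, your approach does not produce the explicit list of subsystems and their simple roots, which the paper needs elsewhere (e.g.\ for Theorems~\ref{infshortrts} and~\ref{infsubs}).
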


We mention the following related works: Feingold and Nicolai (\cite{FN}, Theorem 3.1)  gave a method for generating a subalgebra corresponding to a $\Delta(\Gamma)$--type root subsystem for a certain choice of real roots in any Kac--Moody algebra.
As in Section 4 of this paper, Casselman (\cite{Cas1}) reduced the study of structure constants for Kac--Moody algebras to rank 2 subsystems. 
Tumarkin (\cite{T}) gave a classification the sublattices of  hyperbolic root lattices of the same rank. However, he requires conditions on the possible angles between roots that exclude all but a finite number of rank 2 hyperbolic root systems. In contrast, for our intended application to Kac--Moody groups,  we require the explicit construction of the embedding of the simple roots of a subsystems into the ambient system, rather than just describing its root lattice.

The authors are very grateful to Chuck Weibel for his careful reading of the MSc thesis ([Sr]) of the fourth author. This research was greatly facilitated by experiments carried out in the computational algebra systems Magma (\cite{BCFS}) and Maple (\cite{M}); the diagrams were created in Maple.

\newpage
\section{Rank 2 Kac--Moody root systems}\label{realroots}
Let $A=H(a,b)$ be the $2\times 2$ generalized Cartan matrix\footnote{This is the transpose of the generalized Cartan matrix $A$ in [ACP].}
$$
A=H(a,b) = (a_{ij})_{i,j=1,2} =
\begin{pmatrix}
2 & -b \\
-a & 2
\end{pmatrix}
$$
for positive integers $a\geq b\geq 1$, with Kac--Moody
algebra $\mathfrak{g}=\mathfrak{g}(A)$, root system
$\Delta=\Delta(A)$, and Weyl group $W=W(A)$. Let $\frak h$ denote the Cartan subalgebra of $\frak g$. Let $\langle\cdot,\cdot\rangle: \mathfrak{h}^{\ast}\longrightarrow \mathfrak{h}$ denote the natural nondegenerate bilinear pairing between  $\mathfrak{h}$ and its dual $\mathfrak{h}^*$.

Let $S=\{\alpha_1,\alpha_2\}\subset\frak h$ and $S^\vee=\{\alpha^\vee_1,\alpha^\vee_2\}\subset \frak h^*$ denote bases of simple roots and simple coroots respectively, satisfying $\langle\alpha_j,\alpha_i^{\vee}\rangle=\alpha_j(\alpha_i^{\vee})=a_{ij}$.

Then $\mathfrak{g}$ admits a symmetric invariant bilinear form $(\hspace{2pt},\hspace{2pt})$ which is unique up to normalization
(\cite{Ka}, Section II) with
$$a_{ij}= \dfrac{2(\alpha_i,\alpha_j)}{(\alpha_i,\alpha_i)}.$$
Let $D = \diag(d_1,\cdots,d_\ell)$ with $d_i = \dfrac{2 }{(\alpha_i,\alpha_i)} $ so that the matrix of this form
$$B=B(a,b)= DA=(d_i a_{ij})_{i,j=1,2} =\left( \dfrac{2a_{ij} }{(\alpha_i,\alpha_i)} \right)_{i,j=1,2}=\left(\begin{array}{rr}  2a/b&-a\\-a&2\end{array}\right)$$
is a symmetrization of $A$.  Note that we have normalized so that $\a_2$ is the short simple root  with $(\a_2,\a_2)=2$ and $\a_1$ is the long simple root with $(\a_1,\a_1)=2a/b$.

We identify $\a_i^\vee$ with $\dfrac{2\a_i}{(\a_i,\a_i)}$. For any real root $\a$,  we identify $\a^\vee$ with $\dfrac{2\a}{(\a,\a)}$.

We have the simple root reflections
$$w_j(\alpha_i)=\alpha_i-a_{ij}\alpha_j$$
for $i=1,2$ 
with matrices with respect to $S$
$$[w_1]_S = \left(\begin{array}{rr} -1&b\\0&1\end{array}\right),\qquad
[w_2]_S = \left(\begin{array}{rr}  1&0\\a&-1\end{array}\right).$$
The Weyl group $W=W(A)$ is the group generated by the simple root reflections $w_1$ and $w_2$.

When $ab<4$, $A$ is positive definite and so $\Delta$ is finite.
When $ab=4$, $A$ is
positive semi-definite but not positive definite and so $\Delta$ is affine. When $ab>4$, $A$ is indefinite but every proper generalized Cartan submatrix is
positive definite, and so $A$ is  hyperbolic. Without loss of generality, we assume that $a\ge b$.

The set of real roots is $$\Delta^\re=W\a_1\cup W\a_2.$$
The set of imaginary roots is 
$$\Delta^\text{im}=\{\alpha \in \mathbb{Z}\a_1+\mathbb{Z}\a_2 \mid \a\ne0\text{ and }|\a|^2 \le 0\}.$$

A diagram of the hyperbolic root system $H(5,1)$ is  given in Figure 1.

\begin{figure}[h]
\begin{center}
{\includegraphics[width=2.9in]{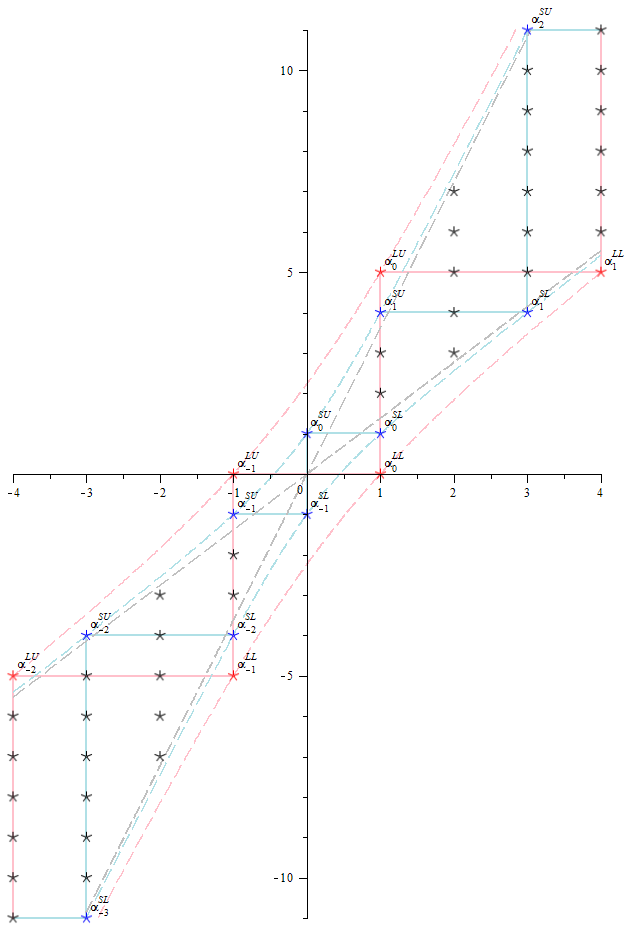}}
\caption{Root system of type $H(5,1)$}
\label{H51}
\end{center}
\end{figure}

Every root $\alpha\in\Delta$ has an expression of the form $\alpha= k_1\alpha_1+k_2\alpha_2$ where the $k_i$ are either all $\geq 0$, in which case $\alpha$ is called {\it positive}, or all $\leq 0$, in which case $\alpha$ is called {\it negative}. The positive roots are denoted $\Delta_+$, the negative roots $\Delta_-$.

Now $|\a_1|^2=2a/b$ and $|\a_2|^2=2$. 
So all real roots $x\alpha_1+y\alpha_2$ in the orbit $W\a_1$ satisfy
$$ {a}x^2 -abxy+by^2 = b,$$
and all real roots $x\alpha_1+y\alpha_2$ in the orbit $W\a_2$ satisfy
$$ {a}x^2 -abxy+by^2 = a.$$
These curves are displayed in Figures~\ref{H51}--\ref{Hyp} as blue (resp.\ red) dotted lines. These curves are elliptical for finite systems, straight lines for affine systems, and hyperbolas for hyperbolic systems.
If $\Delta$ is nonsymmetric ($a>b$) the roots in $W\a_1$ are called \emph{long} and the roots in $W\a_2$ are called \emph{short}.
If $\Delta$ is symmetric ($a=b$) then all roots are considered to be \emph{long}.
Note that (with the exception of $A_2$), the real roots fall into two distinct orbits under the action of $W$. The figures use red for the orbit of $\alpha_1$, blue for the orbit of $\alpha_2$, and black for the imaginary roots. The horizontal lines indicate the action of $w_1$ while the vertical lines indicate the action of $w_2$.

For $j\in\mathbb{Z}$, we define
\begin{align*}
\alpha^{LL}_j &:= (w_1w_2)^j\alpha_1,&\alpha^{LU}_j &:= (w_2w_1)^jw_2\alpha_1\\
\alpha^{SU}_j &:= (w_2w_1)^j\alpha_2, &\alpha^{SL}_j &:= (w_1w_2)^jw_1\alpha_2.
\end{align*}
All real roots are given by these four sequences. If $ab\ge4$, then these are all distinct, and  a root is positive  if and only if $j \geq 0$. 

The following lemma characterizes the real roots in terms of recursive sequences $\eta_j$ and $\gamma_j$. Values of these sequences for small $j$ are given in Table~\ref{smallm}.
\begin{table}[h]
{\tiny
$$\begin{array}{r|rr}
j & \gamma_j & \eta_j \\\hline
0 & 0 & 1 \\
1 & 1 & ab - 1 \\
2 & ab - 2 & a^2b^2 - 3ab + 1 \\
3 & a^2b^2 - 4ab + 3 & a^3b^3 - 5a^2b^2 + 6ab - 1 \\
4 & a^3b^3 - 6a^2b^2 + 10ab - 4 & a^4b^4 - 7a^3b^3 + 15a^2b^2 - 10ab + 1 \\
5 & a^4b^4 - 8a^3b^3 + 21a^2b^2 - 20ab + 5 & a^5b^5 - 9a^4b^4 +
28a^3b^3 - 35a^2b^2 + 15ab - 1 \\
6 & a^5b^5 - 10a^4b^4 + 36a^3b^3 - 56a^2b^2 + 35ab - 6 & a^6b^6 -
11a^5b^5 + 45a^4b^4 - 84a^3b^3 + 70a^2b^2 - 21ab + 1 \\
7 & a^6b^6 - 12a^5b^5 + 55a^4b^4 - 120a^3b^3 + 126a^2b^2 - 56ab + 7 &
a^7b^7 - 13a^6b^6 + 66a^5b^5 - 165a^4b^4 + 210a^3b^3 - 126a^2b^2 +
28ab - 1 \\
8 & a^7b^7 - 14a^6b^6 + 78a^5b^5 - 220a^4b^4 + 330a^3b^3 - 252a^2b^2
+ 84ab - 8 & a^8b^8 - 15a^7b^7 + 91a^6b^6 - 286a^5b^5 + 495a^4b^4 -
462a^3b^3 + 210a^2b^2 - 36ab + 1 
\end{array}$$}
\caption{Values of $\eta_j$ and $\gamma_j$ for small $j$}
\label{smallm}
\end{table}
\begin{lemma}\label{acp} ([ACP], Lemmas 3.2 and 3.3)$\;$ For all integers $j$,
\begin{align*}
\alpha_j^{LL}&= \eta_{j}\alpha_1+a\gamma_{j}\alpha_2,& 
\alpha_j^{LU}&= \eta_{j}\alpha_1+a\gamma_{j+1}\alpha_2,\\
\alpha_j^{SU}&= b\gamma_j\alpha_1+\eta_{j}\alpha_2,& 
\alpha_j^{SL}&= b\gamma_{j+1}\alpha_1+\eta_{j}\alpha_2,
\end{align*}
where
\begin{enumerate}
\item $\gamma_0=0$, $\gamma_1=1$, $\eta_0=1$, $\eta_1=ab-1$;
\item $\eta_{j}=ab\gamma_{j}-\eta_{j-1}$;
\item $\gamma_{j}=\eta_{j-1}-\gamma_{j-1}$;
\item both sequences $X_j=\eta_j$ and $\gamma_j$ satisfy the recurrence relation
$$X_j=(ab-2)X_{j-1}-X_{j-2}.$$
\end{enumerate}
\end{lemma}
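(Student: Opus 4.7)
The plan is to establish the four coordinate formulas by induction on $|j|$, using the action of the Coxeter elements $w_1w_2$ and $w_2w_1$, and then to verify the common three-term recurrence in part (iv) as a routine algebraic consequence of (ii) and (iii).

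First, I would compute the matrices of the Coxeter elements in the basis $S$ from the matrices $[w_1]_S,[w_2]_S$ already recorded in the text:
\[
[w_1w_2]_S=\begin{pmatrix}ab-1 & -b\\ a & -1\end{pmatrix},\qquad
[w_2w_1]_S=\begin{pmatrix}-1 & b\\ -a & ab-1\end{pmatrix},
\]
which are inverses of each other. I would also note the initial vectors
\[
\alpha_0^{LL}=\alpha_1,\quad \alpha_0^{SU}=\alpha_2,\quad \alpha_0^{LU}=w_2\alpha_1=\alpha_1+a\alpha_2,\quad \alpha_0^{SL}=w_1\alpha_2=b\alpha_1+\alpha_2,
\]
so that each initial vector already matches the claimed formula, using $\eta_0=1$, $\gamma_0=0$, $\gamma_1=1$.

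Next comes the induction. For $\alpha_j^{LL}$ with $j\ge 0$, assume $\alpha_j^{LL}=\eta_j\alpha_1+a\gamma_j\alpha_2$; applying $[w_1w_2]_S$ to the coordinate column $(\eta_j,a\gamma_j)^T$ produces the new column $\bigl((ab-1)\eta_j-ab\gamma_j,\ a\eta_j-a\gamma_j\bigr)^T$. The second entry equals $a\gamma_{j+1}$ by (iii), and the first equals $\eta_{j+1}$ because (ii) together with (iii) gives $\eta_{j+1}=ab\gamma_{j+1}-\eta_j=ab(\eta_j-\gamma_j)-\eta_j=(ab-1)\eta_j-ab\gamma_j$. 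For $j<0$ the same step runs in the opposite direction by multiplying by $[w_2w_1]_S=[w_1w_2]_S^{-1}$ and invoking (ii)--(iii) solved for $\eta_{j-1}$, $\gamma_{j-1}$. The sequence $\alpha_j^{SU}$ is handled identically with the roles of $w_1$ and $w_2$ (and of $a$ and $b$) interchanged, and the two $U$--sequences arise by the same induction applied to the base vectors $\alpha_0^{LU}$ and $\alpha_0^{SL}$, with the index on $\gamma$ shifted by one. In fact the $S$--formulas follow from the $L$--formulas by the obvious involution $(a,\alpha_1)\leftrightarrow(b,\alpha_2)$ of the set-up, so only one of the two orbits requires a full inductive argument.

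For part (iv) I would eliminate $\eta$ from (ii)--(iii): substituting $\eta_{j-1}=\gamma_j+\gamma_{j-1}$ (which is (iii) rewritten) into $\gamma_{j+1}=\eta_j-\gamma_j=ab\gamma_j-\eta_{j-1}-\gamma_j$ yields $\gamma_{j+1}=(ab-2)\gamma_j-\gamma_{j-1}$. An identical elimination (or the fact that the two sequences differ only in initial conditions while obeying the same coupled linear system) shows $\eta$ satisfies the same recurrence, completing (iv). There is no real obstacle beyond bookkeeping: one has to be careful that the matrices $[w_i]_S$ displayed in the text correspond to $w_j(\alpha_i)=\alpha_i-a_{ji}\alpha_j$ (consistent with the transpose convention noted in the footnote), and that the induction is run in both directions so as to cover negative indices.
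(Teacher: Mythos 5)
Your proof is correct: the matrices of $w_1w_2$ and $w_2w_1$ are computed correctly from the displayed $[w_1]_S,[w_2]_S$, the base cases match, and the inductive step (in both directions, using that the two Coxeter matrices are mutually inverse) together with the elimination argument for (iv) checks out, as does the $(a,\alpha_1)\leftrightarrow(b,\alpha_2)$ symmetry reducing the short-root formulas to the long-root ones. The paper itself gives no proof of this lemma — it is quoted from [ACP], Lemmas 3.2 and 3.3 — and your induction on the orbits of the Coxeter elements is the natural (and presumably the cited) argument, so there is nothing further to compare.
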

Note that these are both generalized Fibonacci sequences provided that $ab>4$. In particular, $\gamma_j$ is the Lucas sequence with parameters $P=ab-2,Q=1$.

The following  useful  lemma gives negatives of roots:
\begin{lemma}\label{L-neg} For all $j\in\mathbb{Z}$, $\gamma_{-j}=-\gamma_{j}$ and $\eta_{-j}=-\eta_{j-1}$. Also
$$
-\alpha^{LL}_j = \alpha^{LU}_{-j-1}, \quad -\alpha^{LU}_j = \alpha^{LL}_{-j-1},\quad 
-\alpha^{SU}_j = \alpha^{SL}_{-j-1}, \quad -\alpha^{SL}_j = \alpha^{SU}_{-j-1}.
$$
\end{lemma}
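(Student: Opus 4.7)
The plan is to first prove the two scalar identities $\gamma_{-j}=-\gamma_j$ and $\eta_{-j}=-\eta_{j-1}$ purely from the recurrence in Lemma~\ref{acp}, and then substitute them into the explicit formulas for $\alpha_j^{LL},\alpha_j^{LU},\alpha_j^{SU},\alpha_j^{SL}$ to read off the four negation identities.

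Observe that the recurrence $X_j=(ab-2)X_{j-1}-X_{j-2}$ is palindromic: it can equally well be rewritten as $X_{j-2}=(ab-2)X_{j-1}-X_j$, so it determines the two sequences $\gamma_j$ and $\eta_j$ uniquely on all of $\Z$ from any two consecutive values. For $\gamma$, the initial values are $\gamma_0=0,\gamma_1=1$, and the recurrence backwards gives $\gamma_{-1}=(ab-2)\gamma_0-\gamma_1=-1$. For $\eta$, the initial values are $\eta_0=1,\eta_1=ab-1$, and backwards $\eta_{-1}=(ab-2)\eta_0-\eta_1=-1$. These base cases verify $\gamma_{-j}=-\gamma_j$ and $\eta_{-j}=-\eta_{j-1}$ for $j=0,1$.

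For the inductive step I would assume both identities hold for all indices of absolute value at most $n$ and derive them for $n+1$ using the recurrence. For $\gamma$:
\begin{align*}
\gamma_{-(n+1)} &= (ab-2)\gamma_{-n}-\gamma_{-(n-1)} = -(ab-2)\gamma_n+\gamma_{n-1} = -\gamma_{n+1}.
\end{align*}
For $\eta$, applying the recurrence and the inductive hypothesis in the same way gives
\begin{align*}
\eta_{-(n+1)} &= (ab-2)\eta_{-n}-\eta_{-(n-1)} = -(ab-2)\eta_{n-1}+\eta_{n-2} = -\eta_n,
\end{align*}
which is exactly $\eta_{-(n+1)}=-\eta_{(n+1)-1}$. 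So by induction both scalar identities hold for all $j\in\Z$.

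Given these, the four root identities follow directly by substitution into the formulas of Lemma~\ref{acp}. For instance, using $\eta_{-j-1}=-\eta_j$ and $\gamma_{-j}=-\gamma_j$,
\[
\alpha^{LU}_{-j-1}=\eta_{-j-1}\alpha_1+a\gamma_{-j}\alpha_2=-\eta_j\alpha_1-a\gamma_j\alpha_2=-\alpha^{LL}_j,
\]
and the other three identities are obtained by the same one-line substitution, swapping the roles of $\eta$ and $\gamma$ as appropriate for the $SU,SL$ pair. There is no real obstacle here: once the recurrence is observed to be symmetric in $j\mapsto -j$ after the shift, everything is a two-line induction plus direct substitution.
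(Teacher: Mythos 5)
Your proof is correct: the backward extension of the recurrence $X_j=(ab-2)X_{j-1}-X_{j-2}$, the two-sided induction for the scalar identities, and the substitution into the formulas of Lemma~\ref{acp} all check out. The paper states this lemma without proof, and your argument is exactly the routine verification the authors evidently had in mind.
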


\section{Sums of real roots}\label{rootsums}
Let $\Delta$ be an infinite rank 2 root system of type $H(a,b)$ with $a\ge b$ and $ab\ge4$.
In this section we determine all real roots $\alpha,\beta\in\Delta$ for which  $\alpha+\beta$ is also a real root.

We will split our analysis into two cases: that in which $a\geq b >1$, and that in which $a>b=1$. We find in the first case that the sum of two real roots is never a real root, and in the second case that there are certain $\beta\in\realroots$ so that $\beta\pm \alpha_i \in \realroots$.

\subsection{The case $a\ge b>1$}

\begin{lemma}\label{staircase} If $a\ge b>1$, then
\begin{eqnarray*}
0=b\gamma_0<\eta_0<b\gamma_1<\eta_1<b\gamma_2<\cdots,\\
0=a\gamma_0<\eta_0<a\gamma_1<\eta_1<a\gamma_2<\cdots.
\end{eqnarray*}
In fact the gaps between sequence elements are nondecreasing, that is, for $j\ge0$,
\begin{align*}
 \eta_{j+1}-b\gamma_{j+1}&\ge b\gamma_{j+1}-\eta_j\ge\eta_j-b\gamma_j,\\
 \eta_{j+1}-a\gamma_{j+1}&\ge a\gamma_{j+1}-\eta_j\ge\eta_j-a\gamma_{j}.
\end{align*}
\end{lemma}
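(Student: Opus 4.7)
The plan is to prove both chains of inequalities together with the nondecreasing-gaps statements by induction on $j$, using the recurrences (ii) $\eta_{j+1}=ab\gamma_{j+1}-\eta_j$ and (iii) $\gamma_{j+1}=\eta_j-\gamma_j$ from Lemma \ref{acp}, together with the hypothesis $a,b\ge 2$. The strict positivity of all the gaps (which gives the chain of strict inequalities) will come out of the same induction, once we establish the monotonicity of gaps.

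Concretely, first I would check the base case $j=0$ directly from $\gamma_0=0$, $\eta_0=1$, $\gamma_1=1$, $\eta_1=ab-1$: the three differences $\eta_0-b\gamma_0=1$, $b\gamma_1-\eta_0=b-1$, and $\eta_1-b\gamma_1=b(a-1)-1$ are all positive and nondecreasing because $b\ge 2$ and $a-1\ge 1$; the analogous check for the second chain is identical with $a$ in place of $b$.

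For the inductive step I would rewrite the two relevant gap inequalities by plugging in the recurrences. Using (ii),
\[
\eta_{j+1}-b\gamma_{j+1}=ab\gamma_{j+1}-\eta_j-b\gamma_{j+1}=b(a-1)\gamma_{j+1}-\eta_j,
\]
so the inequality $\eta_{j+1}-b\gamma_{j+1}\ge b\gamma_{j+1}-\eta_j$ reduces to $b(a-1)\gamma_{j+1}\ge b\gamma_{j+1}$, i.e.\ $a\ge 2$. Using (iii),
\[
b\gamma_{j+1}-\eta_j=b(\eta_j-\gamma_j)-\eta_j=(b-1)\eta_j-b\gamma_j,
\]
so the inequality $b\gamma_{j+1}-\eta_j\ge \eta_j-b\gamma_j$ reduces to $(b-1)\eta_j\ge \eta_j$, i.e.\ $b\ge 2$. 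The parallel identities
\[
\eta_{j+1}-a\gamma_{j+1}=a(b-1)\gamma_{j+1}-\eta_j,\qquad a\gamma_{j+1}-\eta_j=(a-1)\eta_j-a\gamma_j
\]
handle the second chain in exactly the same way, with the monotonicity following from $b\ge 2$ and $a\ge 2$ respectively. Since the inductive hypothesis guarantees that the previous gap is strictly positive and the new gap dominates it, all the asserted strict inequalities are propagated forward.

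The argument is essentially bookkeeping, so I do not anticipate a real obstacle. The only mild subtlety is that strict positivity of the new gaps needs to be carried through the induction; but it comes for free because the inductive hypothesis gives $\eta_j-b\gamma_j>0$ (resp.\ $\eta_j-a\gamma_j>0$), and the gap-monotonicity inequality then forces each subsequent gap to be at least that positive value. Thus the two displayed chains of strict inequalities follow simultaneously with the claimed nondecreasing behavior of the gaps.
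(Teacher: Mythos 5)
Your proof is correct and follows essentially the same route as the paper: both reduce the gap-monotonicity inequalities to $a\ge2$ and $b\ge2$ via the identities $\eta_{j+1}-b\gamma_{j+1}=(a-1)b\gamma_{j+1}-\eta_j$ and $b\gamma_{j+1}-\eta_j=(b-1)\eta_j-b\gamma_j$ (and their $a\leftrightarrow b$ counterparts) coming from Lemma~\ref{acp}(ii),(iii). You merely make explicit the base case and the propagation of strict positivity, which the paper leaves implicit.
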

\begin{proof}
 To see that the gaps in the sequences are nondecreasing, we apply Lemma~\ref{acp} as follows: 
\begin{align*}
\eta_{j+1}-b\gamma_{j+1}&= (a-1)b\gamma_{j+1}-\eta_j
\ge b\gamma_{j+1}-\eta_m= (b-1)\eta_j-b\gamma_{j}\ge \eta_j-b\gamma_{j}. 
\end{align*} 
The other result is similar.
\end{proof}

The inequalities in Lemma~\ref{staircase} show that the real roots have the "staircase pattern'' shown in Figure~\ref{bgt1}.
\begin{figure}[h]
\begin{center}
{\includegraphics[width=3.4in]{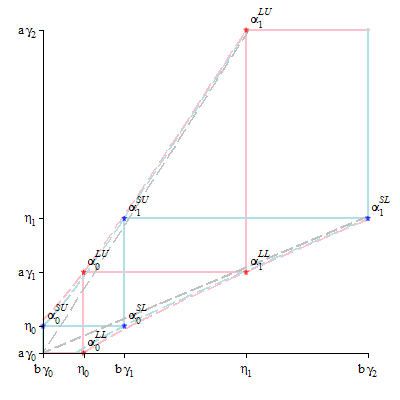}}
\caption{The positive real roots for $H(a,b)$ with $a\ge b>1$}
\label{bgt1}
\end{center}\end{figure}

\begin{proposition}\label{sumbne1}
If $a\ge b>1$ and $\a,\beta\in\Delta^\re$, then $\a+\b\notin\Delta^\re$.
\end{proposition}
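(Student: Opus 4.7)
The plan is to first reduce to the case that $\alpha,\beta,\gamma:=\alpha+\beta$ are all \emph{positive} real roots. Since $\Delta^{\re}$ is closed under negation (Lemma~\ref{L-neg}), any sum identity $\alpha+\beta=\gamma$ among real roots can be rearranged, e.g.\ as $\alpha=\gamma+(-\beta)$, to make all three summands positive. I then write $\alpha=x_1\alpha_1+y_1\alpha_2$, $\beta=x_2\alpha_1+y_2\alpha_2$, $\gamma=x_3\alpha_1+y_3\alpha_2$ with $x_i,y_i\in\Z_{\ge 0}$, and denote by $X:=\{0,\eta_0,b\gamma_1,\eta_1,b\gamma_2,\ldots\}$ and $Y:=\{0,\eta_0,a\gamma_1,\eta_1,a\gamma_2,\ldots\}$ the sets of admissible first and second coordinates of positive real roots.

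Next I will dispose of the boundary cases $\alpha\in\{\alpha_1,\alpha_2\}$ (and symmetrically for $\beta$). If $\alpha=\alpha_2=(0,1)$, then $\beta$ and $\gamma$ would be positive real roots with a common first coordinate whose second coordinates differ by exactly $1$. But Lemma~\ref{staircase} together with the identity $\eta_m=\gamma_m+\gamma_{m+1}$ implies that the two positive real roots sharing any given first coordinate have second coordinates differing by $a(\gamma_{j+1}-\gamma_j)\ge a$ or $\eta_j-\eta_{j-1}\ge ab-2$, each at least $2$. The case $\alpha=\alpha_1$ is entirely symmetric. So we may assume $x_i,y_i\ge 1$ for $i=1,2$.

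The core step is to show that $x_1+x_2\notin X$ whenever $x_1,x_2\in X$ with $x_1,x_2\ge 1$, contradicting $x_3=x_1+x_2\in X$. I would split into three subcases. \emph{(i) $a\ge b\ge 3$:} I would prove $X_{k+1}>2X_k$ for every $k\ge 1$; using $\eta_m=\gamma_m+\gamma_{m+1}$ this reduces to $\gamma_{m+1}>(2b-1)\gamma_m$ and $(b-2)\gamma_{m+1}>2\gamma_m$, both easy consequences of the estimate $\gamma_{m+1}/\gamma_m>ab-3\ge 6$ coming from the recurrence $\gamma_{m+1}=(ab-2)\gamma_m-\gamma_{m-1}$. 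Then $X_i+X_j\le 2X_{\max(i,j)}<X_{\max(i,j)+1}$ and $X_i+X_j>X_{\max(i,j)}$, so $X_i+X_j\notin X$. \emph{(ii) $b=2$, $a\ge 3$:} Here consecutive gaps in $X$ pair up equally and the bound $X_{k+1}>2X_k$ fails for odd $k\ge 3$; a parity case analysis on $\max(i,j)$ shows the only solution to $X_i+X_j\in X$ with $i,j\ge 1$ is the degenerate $X_1+X_1=X_2$, which forces $\alpha=\beta=\alpha^{LU}_0=(1,a)$ and $\gamma=(2,2a)$, contradicting the fact that positive real roots with first coordinate $2$ are precisely $(2,1)$ and $(2,2a-1)$. \emph{(iii) $a=b=2$ (affine $A_1^{(1)}$):} Lemma~\ref{acp} directly gives $|x-y|=1$ for every positive real root $(x,y)$, and then $|x_3-y_3|=|(x_1-y_1)+(x_2-y_2)|\in\{0,2\}$ rules out $\gamma$ being a real root.

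The main obstacle is subcase (ii): the clean inequality $X_{k+1}>2X_k$ breaks down, and one must carefully analyze the paired-gap structure of $X$ (again via $\eta_m=\gamma_m+\gamma_{m+1}$) to isolate the unique exceptional decomposition $X_1+X_1=X_2$ and then eliminate it using the second-coordinate constraint. Everything else is a routine consequence of Lemma~\ref{staircase} and the recurrences for $\gamma_j$ and $\eta_j$ in Lemma~\ref{acp}.
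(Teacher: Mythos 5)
Your proposal is correct, but it takes a genuinely different and considerably more laborious route than the paper. The paper's proof rests on a single reduction you never invoke: writing $\a=w\a_i$ and applying the linear, $\Delta^\re$-preserving map $w^{-1}$ to the identity $\a+\b=\g$, so that one may assume from the outset that $\a$ is (up to sign) a \emph{simple} root. After that reduction, the entire proof is exactly your ``boundary case'' step: two positive real roots differing by $\a_1$ (resp.\ $\a_2$) would have to share a second (resp.\ first) coordinate while their other coordinates differ by $1$, which the strict interleaving and nondecreasing gaps of Lemma~\ref{staircase} forbid, since the relevant differences $a(\gamma_{j+1}-\gamma_j)$, $b(\gamma_{j+1}-\gamma_j)$ and $\eta_{j}-\eta_{j-1}$ are all at least $2$ when $a\ge b>1$. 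Everything in your ``core step'' --- the growth estimate $X_{k+1}>2X_k$ for $a\ge b\ge 3$, the paired-gap analysis and the elimination of the exceptional decomposition $X_1+X_1=X_2$ when $b=2$, and the separate parity argument for $A_1^{(1)}$ --- is therefore unnecessary overhead created by not conjugating $\a$ into $\{\pm\a_1,\pm\a_2\}$ first. That said, your claims do check out: the inequalities follow from $\gamma_{m+1}>(ab-3)\gamma_m$ and $\eta_m=\gamma_m+\gamma_{m+1}$ (Lemma~\ref{acp}); for $b=2$ the quantity $\epsilon_m=\gamma_{m+1}-\gamma_m$ is odd and satisfies $\eta_{m-1}<\epsilon_m<\eta_m$ for $m\ge1$, so it lies in $X$ only when $m=0$, isolating the degenerate case exactly as you predict; and the $(2,2a)$ elimination is right. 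So what your approach buys is a self-contained statement about the coordinate sequences $X$ and $Y$ with no appeal to the Weyl action beyond closure under negation; what it costs is the delicate $b=2$ case analysis, which is precisely the kind of complication the one-line Weyl-group reduction is designed to avoid.
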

\begin{proof}
We can write $\a=w\a_i$ for $i=1$ or 2, and some $w\in W$. 
We may also replace $\beta$ by $w^{-1}\beta$. Thus we wish to  determine the $\beta\in\Delta^{\re}$ for which $\alpha_i+\beta\in\Delta^\re$.
Replacing $\beta$ by $-\beta$ if $\beta\in\Delta^\re_-$, we wish to determine the $\beta\in\Delta_+^{\re}$ for which $\alpha_i\pm\beta\in\Delta^\re$.
Thus we may take $\a=\pm\a_i$ and $\b\in\Delta^\re_+$.
From Figure~\ref{bgt1}, it is clear that $\b\pm\a_i\in\Delta^\re$ only when  one of the differences $\eta_{j+1}-\eta_j$ or
$\gamma_{j+1}-\gamma_j$ equals 1. By Lemma~\ref{staircase}, this cannot occur.
\end{proof}

\subsection{The case $a>b=1$}
This case  is considerably more intricate. 
We define real functions
$\Psi_\pm(x) := \frac12\left((x-2)\pm\sqrt{x(x-4)}\right)$,
for which $\psi_\pm=\Psi_\pm(ab)$ are the characteristic roots of the recurrence equation in
Lemma~\ref{acp}(iv).

The following lemma gives a bound on these parameters. 
\begin{lemma}\label{bound} If $ab>4$, then $\psi_+>2.61$ and $0<\psi_-< 0.45$. . 
\end{lemma}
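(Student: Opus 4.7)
The plan is a short direct estimate using monotonicity of $\Psi_+$ and the product identity $\Psi_+\Psi_-=1$.

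First, since $a,b$ are positive integers and $ab>4$, the smallest possible value of $ab$ is $5$. The function $\Psi_+(x)=\tfrac12\left((x-2)+\sqrt{x(x-4)}\right)$ is strictly increasing on $[4,\infty)$ because both $x-2$ and $x(x-4)$ are. Hence
$$\psi_+=\Psi_+(ab)\ge \Psi_+(5)=\tfrac12(3+\sqrt 5).$$
A crude numerical bound $\sqrt 5>2.236$ then gives $\psi_+\ge (3+2.236)/2 = 2.618>2.61$.

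For $\psi_-$, I would avoid bounding it directly and instead exploit the fact that $\psi_\pm$ are the characteristic roots of $X^2-(ab-2)X+1=0$ from Lemma~\ref{acp}(iv). Thus
$$\psi_+\psi_-=1,$$
which one can also verify directly from $\tfrac14\left((x-2)^2-x(x-4)\right)=1$. In particular $\psi_->0$ and $\psi_-=1/\psi_+$, so by the bound on $\psi_+$,
$$\psi_-=\frac1{\psi_+}\le \frac{2}{3+\sqrt 5}=\frac{3-\sqrt 5}{2}<\frac{3-2.236}{2}=0.382<0.45.$$

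There is no substantial obstacle here; the only thing to be careful about is to note explicitly that $ab>4$ together with $ab\in\Z$ forces $ab\ge5$ (which is why one can bound by the concrete value $(3\pm\sqrt 5)/2$), and to record the fact that $\Psi_+\Psi_-=1$ so that the bound on $\psi_-$ follows from the bound on $\psi_+$ rather than requiring a separate monotonicity argument.
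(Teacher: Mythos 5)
Your proof is correct. The bound on $\psi_+$ is obtained exactly as in the paper: $\Psi_+$ is increasing, $ab\ge5$, so $\psi_+\ge\Psi_+(5)=\tfrac12(3+\sqrt5)>2.61$. For $\psi_-$ you take a slightly different route: instead of arguing (as the paper does) that $\Psi_-$ is positive and decreasing for $x\ge5$ and evaluating $\Psi_-(5)$, you use the product identity $\Psi_+\Psi_-=1$, which holds because $\psi_\pm$ are the roots of $X^2-(ab-2)X+1=0$, so that $\psi_-=1/\psi_+\le\tfrac12(3-\sqrt5)<0.45$ follows immediately from the bound on $\psi_+$. This is a small but genuine simplification: it removes the need for a second monotonicity argument and makes the positivity of $\psi_-$ automatic, at the cost of invoking the recurrence's characteristic polynomial (or the one-line identity $(x-2)^2-x(x-4)=4$, which you also note). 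Both arguments are equally rigorous; your explicit remark that $ab\in\Z$ and $ab>4$ force $ab\ge5$ is a point the paper leaves implicit and is worth stating.
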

\begin{proof}
We can use the derivative of $\Psi_\pm(x)$ to show that
$\Psi_+$ is increasing for $x\ge5$ and $\Psi_-$ is positive and decreasing for $x\ge5$.
Hence $\psi_+\ge\Psi_+(5)>2.61$ and $0<\psi_-\le\Psi_-(5)<0.45.$
\end{proof}
Define
$$ \lambda := \frac{\psi_+}{\psi_+-1}, \qquad \mu:=\frac{1}{\sqrt{ab(ab-4)}}.$$
The following lemma is an easy consequence of Lemma~\ref{bound} and shows that the sequences $\eta_j$ and $\gamma_j$ are each within a small constant of being exponential with basis $\psi_+$.
\begin{lemma} If $ab>4$, then, for $j\ge0$,
\begin{align*}
\lambda\psi_+^j-1.62<\eta_j&<\lambda\psi_+^j,\\
\mu\psi_+^j-0.45<\gamma_j&<\mu\psi_+^j,
\end{align*}
where $\psi_+>2.61$, $1 < \lambda < 1.62$, and $0 < \mu < 0.45$.
\end{lemma}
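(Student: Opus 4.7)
The plan is to solve the linear recurrence of Lemma~\ref{acp}(iv) in closed form and then read off the bounds. Its characteristic polynomial $X^2-(ab-2)X+1$ has discriminant $(ab-2)^2-4=ab(ab-4)>0$ since $ab>4$, so its roots are the real numbers $\psi_\pm=\Psi_\pm(ab)$ defined in the excerpt; by Vieta, $\psi_+\psi_-=1$ and $\psi_+-\psi_-=\sqrt{ab(ab-4)}=1/\mu$. Every solution is therefore of the form $C_+\psi_+^j+C_-\psi_-^j$, and matching the initial data from Lemma~\ref{acp}(i) (using $\psi_-=1/\psi_+$ to simplify the coefficient $(\psi_++1)/(\psi_+-\psi_-)$ down to $\psi_+/(\psi_+-1)=\lambda$) yields
\begin{align*}
\gamma_j &= \mu(\psi_+^j-\psi_-^j),\\
\eta_j &= \lambda\psi_+^j - \frac{\psi_-^j}{\psi_+-1}.
\end{align*}

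With these formulas, each of the four inequalities reduces to a routine numerical estimate via Lemma~\ref{bound}. The upper bounds hold because $\psi_-^j>0$ makes the subtracted correction terms strictly positive for $j\ge 1$, while $\gamma_0=0<\mu=\mu\psi_+^0$ and $\eta_0=1<\lambda=\lambda\psi_+^0$ handle $j=0$ directly. For the lower bounds, the inequality $0<\psi_-<0.45<1$ from Lemma~\ref{bound} gives $\psi_-^j\le 1$ for $j\ge 0$; hence $\mu\psi_-^j\le\mu<0.45$ and $\psi_-^j/(\psi_+-1)\le 1/(\psi_+-1)<1/1.61<1.62$, as required. Finally, $\lambda=1+1/(\psi_+-1)>1$ is immediate, and since the worst case $ab=5$ gives $\psi_+=(3+\sqrt 5)/2$ (the squared golden ratio), one finds $\lambda\le(1+\sqrt 5)/2<1.62$ and $\mu\le 1/\sqrt 5<0.45$.

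There is no serious obstacle: once the closed forms are in place, the result is pure arithmetic. The only care needed is that the numerical constant $1.62$ barely exceeds the supremum $(1+\sqrt 5)/2\approx 1.61803$ of $\lambda$; all other estimates have ample slack, and the constants $0.45$, $1.62$, $2.61$ in the hypothesis of Lemma~\ref{bound} were clearly chosen to fit this tight case.
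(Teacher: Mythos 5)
Your proof is correct, and it follows exactly the route the paper intends (the paper omits the proof, merely calling the lemma an easy consequence of Lemma 3.4, but it explicitly defines $\psi_\pm$ as the characteristic roots of the recurrence in Lemma 2.1(iv), so the closed forms $\gamma_j=\mu(\psi_+^j-\psi_-^j)$ and $\eta_j=\lambda\psi_+^j-\psi_-^j/(\psi_+-1)$ are the intended mechanism). Your numerical checks, including the tight bound $\lambda\le(1+\sqrt5)/2<1.62$ at $ab=5$, are all accurate.
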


The following lemma now follows,  showing that the roots have the "staircase pattern'' shown in Figure~\ref{beq1}.
\begin{lemma}\label{staircase2} (\cite{CKMS},~\cite{Sr}) If $a>4$ and $b=1$, then
{\re}
\begin{align*}
0&=\gamma_0<\eta_0=\gamma_1<\gamma_2<\eta_1<\gamma_3<\eta_2<\cdots,\\
0&=a\gamma_0<\eta_0<\eta_1<a\gamma_1<\eta_2<a\gamma_2<\eta_3<a\gamma_3<\cdots.
\end{align*}
\end{lemma}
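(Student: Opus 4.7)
The plan is to reduce both staircase chains to a single ratio bound and prove it by a clean induction using the recurrences of Lemma~\ref{acp}, which for $b=1$ specialize to $\eta_j = a\gamma_j - \eta_{j-1}$ and $\gamma_{j+1} = \eta_j - \gamma_j$.

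First I would dispose of the ``short-hop'' comparisons in both chains for free. The identities
\begin{equation*}
\eta_j - \gamma_{j+1} = \gamma_j \quad\text{and}\quad a\gamma_j - \eta_j = \eta_{j-1}
\end{equation*}
are strictly positive for $j \geq 1$, giving $\gamma_{j+1} < \eta_j$ and $\eta_j < a\gamma_j$ immediately. Combined with the initial values $\gamma_0 = 0$, $\eta_0 = \gamma_1 = 1$, $\gamma_2 = a-2$, $\eta_1 = a-1$, this settles $\gamma_0 < \eta_0 = \gamma_1 < \gamma_2 < \eta_1$ in the first chain and $0 < \eta_0 < \eta_1 < a\gamma_1$ in the second (the last using $a \geq 5$).

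The substantive content is the ``long-hop'' comparisons $\eta_j < \gamma_{j+2}$ and $a\gamma_j < \eta_{j+1}$ for $j \geq 1$. Using $\gamma_{j+2} = (a-2)\eta_j - (a-1)\gamma_j$ and $\eta_{j+1} = (a-1)\eta_j - a\gamma_j$, the former becomes $\eta_j/\gamma_j > (a-1)/(a-3)$ and the latter becomes $\eta_j/\gamma_j > 2a/(a-1)$. For $a \geq 5$ the second bound is the stronger one, since $2a/(a-1) > (a-1)/(a-3)$ reduces to $a^2 - 4a - 1 > 0$. Hence it suffices to establish, for all $j \geq 1$,
\begin{equation*}
(\star)_j\colon\qquad \frac{\eta_j}{\gamma_j} > \frac{2a}{a-1}.
\end{equation*}
The base $(\star)_1$ reads $a-1 > 2a/(a-1)$, i.e., $(a-1)^2 > 2a$, valid for $a \geq 5$. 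For the inductive step, writing $r_j = \gamma_j/\eta_j$ and using the two recurrences yields
\begin{equation*}
\frac{\eta_{j+1}}{\gamma_{j+1}} \;=\; \frac{a\gamma_{j+1}-\eta_j}{\eta_j-\gamma_j} \;=\; a - \frac{1}{1-r_j}.
\end{equation*}
The hypothesis $(\star)_j$ gives $r_j < (a-1)/(2a)$, so $1/(1-r_j) < 2a/(a+1)$, hence $\eta_{j+1}/\gamma_{j+1} > a(a-1)/(a+1)$; then $(\star)_{j+1}$ follows from $a(a-1)/(a+1) \geq 2a/(a-1)$, which once more reduces to $a^2 - 4a - 1 \geq 0$.

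The only real obstacle is that the reduction, the base case, and the inductive step must all survive the single quadratic threshold $a^2 - 4a - 1 \geq 0$. For integer $a$ this is exactly the condition $a \geq 5$ in the hypothesis, so the proof closes uniformly. Notably this quadratic just fails at $a = 4$, consistent with $a = 4$, $b = 1$ being the affine case $A_2^{(2)}$, explicitly excluded from the statement, where the strict staircase in fact degenerates.
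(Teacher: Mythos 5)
Your proof is correct. All the algebra checks out: the short-hop inequalities $\eta_j-\gamma_{j+1}=\gamma_j$ and $a\gamma_j-\eta_j=\eta_{j-1}$ are immediate from Lemma~\ref{acp} with $b=1$; the identities $\gamma_{j+2}=(a-2)\eta_j-(a-1)\gamma_j$ and $\eta_{j+1}=(a-1)\eta_j-a\gamma_j$ are right; the reduction of both long-hop comparisons to the single ratio bound $(\star)_j$ is valid (with $2a/(a-1)$ indeed dominating $(a-1)/(a-3)$ precisely when $a^2-4a-1>0$); and the induction $\eta_{j+1}/\gamma_{j+1}=a-1/(1-r_j)$ closes at the same quadratic threshold, which holds for integers $a\ge5$ and fails at $a=4$. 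Positivity of $\gamma_j,\eta_j$ propagates along with $(\star)_j$, so the divisions are legitimate. However, your route is genuinely different from the paper's. The paper (following \cite{CKMS} and \cite{Sr}) derives this lemma from the preceding machinery: the characteristic roots $\psi_\pm=\Psi_\pm(ab)$ of the recurrence, the numerical bounds $\psi_+>2.61$, $1<\lambda<1.62$, $0<\mu<0.45$, and the resulting sandwich estimates $\lambda\psi_+^j-1.62<\eta_j<\lambda\psi_+^j$ and $\mu\psi_+^j-0.45<\gamma_j<\mu\psi_+^j$, from which each interleaving inequality follows by comparing explicit exponential lower and upper bounds. That approach yields quantitative information on how fast the gaps grow (useful elsewhere in the paper), but it leans on decimal estimates and on Lemmas~3.4--3.5. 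Your ratio induction is more self-contained and elementary --- it uses only the recurrences of Lemma~\ref{acp} --- and it isolates the exact algebraic threshold $a^2-4a-1\ge0$ separating the hyperbolic case from the affine degeneration at $a=4$, which the paper's numerical bounds obscure. Either proof would serve; yours is arguably the cleaner one for this specific lemma.
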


\begin{figure}
\begin{center}
{\includegraphics[width=3.4in]{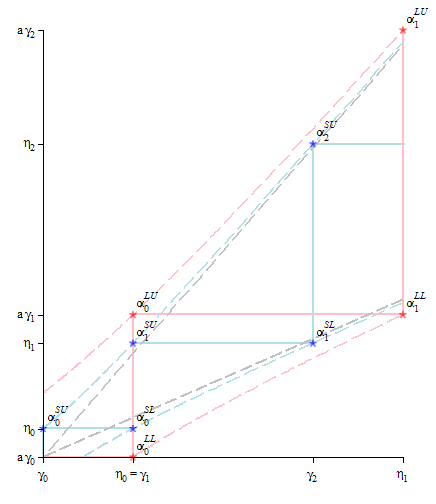}}
\caption{The positive real roots for $H(a,1)$ with $a>4$}
\label{beq1}
\end{center}\end{figure}

 We now use the above results to determine the $\beta\in\realroots_+$ for which $\beta\pm \alpha_i\in\realroots$, for $i=1,2$.

\begin{theorem}
\label{T-beq1}
If $a>4$, $b=1$ and $\beta\in\Delta_+^{\re}$ then
\begin{enumerate}
\item $\beta+\alpha_1\in\Delta^{\re}$ if and only if  $\beta=\alpha_2$;
\item $\beta-\alpha_1\in\Delta^{\re}$ if and only if  $\beta=\alpha_1+\alpha_2$;
\item $\beta+\alpha_2\in\Delta^{\re}$ if and only if  $\beta=\alpha_1$ or $\alpha_1+(a-1)\alpha_2$;
\item $\beta-\alpha_2\in\Delta^{\re}$ if and only if  $\beta=\alpha_1+\alpha_2$ or $\alpha_1+a\alpha_2$.
\end{enumerate}
\end{theorem}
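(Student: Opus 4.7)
The plan is to combine the explicit coordinate formulas from Lemma~\ref{acp} with the strict staircase orderings of Lemma~\ref{staircase2}. First I would reduce to the case in which both $\beta$ and $\beta \pm \alpha_i$ are positive real roots. Since a positive real root has both coordinates nonnegative, $\beta + \alpha_i$ cannot be negative; and the only way $\beta - \alpha_i$ could lie in $\Delta_-^\re$ is $\beta = \alpha_i$, which gives $\beta - \alpha_i = 0 \notin \Delta^\re$. Thus (i)--(iv) all reduce to the question: given $\beta \in \Delta_+^\re$, find a second positive real root $\beta'$ at $\alpha_1$-distance (for (i), (ii)) or $\alpha_2$-distance (for (iii), (iv)) from $\beta$.

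Next I would tabulate positive real roots by coordinate. By Lemma~\ref{acp}, short roots have $x$-coordinate in $\{\gamma_j\}$ and $y$-coordinate in $\{\eta_j\}$, while long roots have $x$-coordinate in $\{\eta_j\}$ and $y$-coordinate in $\{a\gamma_j\}$. Lemma~\ref{staircase2} shows that the sequences $\{\gamma_j\}$ and $\{\eta_j\}$ interlace with a single collision at $\gamma_1 = \eta_0 = 1$, and that $\{\eta_j\}$ and $\{a\gamma_j\}$ interlace with no collision. Consequently, each $y$-value hosts at most two positive real roots (both short or both long), while each $x$-value hosts at most two positive real roots, except the line $x = 1$, which exceptionally carries the four roots $\alpha_1,\, \alpha_1+\alpha_2,\, \alpha_1+(a-1)\alpha_2,\, \alpha_1+a\alpha_2$ with $y$-coordinates $0, 1, a-1, a$.

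For (i) and (ii), a pair sharing $y$ with $x$-difference $1$ must consist of two short roots with $x$-difference $\gamma_{j+1} - \gamma_j$ or two long roots with $x$-difference $\eta_j - \eta_{j-1}$. Using Lemma~\ref{acp}, the only unit gap is $\gamma_1 - \gamma_0 = 1$, since $\gamma_2 - \gamma_1 = a - 3 \geq 2$, $\eta_1 - \eta_0 = a - 2 \geq 3$, and both gap-sequences grow via the recurrence $X_j = (a-2)X_{j-1} - X_{j-2}$. This isolates the unique pair $\{\alpha_2,\, \alpha_1+\alpha_2\}$, yielding (i) and (ii).

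For (iii) and (iv), a pair sharing $x$ with $y$-difference $1$ must occur at $x = 1$, since at any other $x$-value the two roots there differ in $y$ by $\eta_j - \eta_{j-1} \geq a - 2$ (short case) or $a(\gamma_{j+1} - \gamma_j) \geq a$ (long case), both exceeding $1$. At $x = 1$, the four $y$-values $0, 1, a-1, a$ have consecutive gaps $1, a-2, 1$, yielding exactly the two pairs $\{\alpha_1,\, \alpha_1+\alpha_2\}$ and $\{\alpha_1+(a-1)\alpha_2,\, \alpha_1+a\alpha_2\}$ claimed in (iii) and (iv). The main subtlety is the single collision $\gamma_1 = \eta_0 = 1$, which is precisely what causes (iii) and (iv) to admit two answers rather than one, and which must be identified via the staircase inequalities rather than by generic growth estimates.
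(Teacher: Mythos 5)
Your proof is correct and rests on the same two ingredients as the paper's own argument: the explicit coordinates of Lemma~\ref{acp} and the interlacing/gap inequalities of Lemma~\ref{staircase2}, with the single collision $\gamma_1=\eta_0=1$ accounting for the exceptional line $x=1$. The paper proceeds by a case analysis on which of the four branches $\beta$ and $\beta\pm\alpha_i$ can lie on and rules out the resulting coordinate equations, whereas your reorganization by shared coordinate lines is a cleaner packaging of the same computation; in particular, your observation that $x=1$ carries exactly four positive real roots with $y$-values $0,1,a-1,a$ makes the two-answer phenomenon in (iii) and (iv) transparent.
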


\begin{proof} For (i),  note that if $\beta = \alpha_2$, then $\beta+\alpha_1 = \alpha_1+\alpha_2 = \alpha_0^{SL}$, by Table 1.  Thus $\beta+\alpha_1$ is  a real root. 

Conversely, let $\beta\in\realroots$ such that $\beta+\alpha_1 \in\realroots$. 
First suppose that $\beta=\alpha_j^{SU}$ for some $j$. By Lemma~\ref{acp}, we have 
$\beta_j = \gamma_j\alpha_1 + \eta_j\alpha_2.$ Then $$\beta+\alpha_1 = (\gamma_j+1)\alpha_1+\eta_j\alpha_2.$$
If $\beta+\alpha_1$ is a long root on a lower branch, then again by Lemma~\ref{acp} we have 
$\beta+\alpha_1 = \eta_k \alpha_1 + a\gamma_k\alpha_2$ for some $k$. 
Then we must have $\gamma_{j}+1 = \eta_k$ and $\eta_j = a\gamma_k$, but by Lemma~\ref{staircase}  there are no $j,k$ such that $\eta_j = a\gamma_k$. Similarly, there are no $j,k$ such that $\gamma_j+1 = \eta_k$ and 
$\eta_j = a\gamma_{k+1}$, so $\beta+\alpha_1$ cannot be a long root on an upper branch. 
If $\beta+\alpha_1$ is a short root on an upper branch, then Lemma~\ref{acp} implies that $\gamma_j+1 = \gamma_k $ and $\eta_j = \eta_k$, but again, 
no such $j,k$ can exist: if $\beta+\alpha_1$ 
is a short root on an upper branch, then we must have $j,k$ so that
$$ \gamma_j+1 = \gamma_{k+1} \,\,\, \text{and} \,\,\, \eta_j = \eta_k.$$
The second of these conditions implies that $j=k$. Then by the first of these conditions we have $\gamma_{j+1}-\gamma_j = 1$. Then $j$ must be $0$. So 
$$\beta+\alpha_1 = \alpha_0^{SL} = \alpha_1 + \alpha_2,$$
so $\beta$ is $\alpha_2$. Similarly we may check that if $\beta = \alpha_{j}^{LL}$, $\beta=\alpha_j^{LU}$, or $\beta=\alpha_j^{SL}$ for some $j>0$, then $\beta+\alpha_1 \not\in\realroots$. 

Following similar reasoning, we can check that the second, third and fourth claims hold. \end{proof}

By a straightforward case-by-case argument, we can now prove the following result about lengths of sums of roots. This is easily proved for $H(4,1)$, since in this case every short root $x\a_1+y\a_2$ is on the lines
$y=4x\pm1$, and every long root is on the lines $y=4x\pm2$.

\begin{thm} \label{sums}
Let $\Delta$ be an infinite rank 2 root system.
\begin{enumerate}
\item If $\alpha,\beta,\alpha+\beta\in\Delta^{\re}$ with $\alpha$ and $\beta$ short, then $\alpha+\beta$ is long.
\item If $\alpha,\beta,\alpha+\beta\in\Delta^{\re}$ with $\alpha$ short and $\beta$ long, then $\alpha+\beta$ is short.
\item If $\alpha,\beta\in\Delta^{\re}$ with $\alpha$ and $\beta$ long, then $\alpha+\beta\notin\Delta^{\re}$.
\end{enumerate}
\end{thm}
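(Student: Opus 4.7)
My plan is to dispatch all cases except $H(a,1)$ with $a \geq 4$ immediately via Proposition~\ref{sumbne1}, then use a Weyl-group reduction combined with Theorem~\ref{T-beq1} to handle $H(a,1)$ for $a \geq 5$ by inspection, and argue the remaining affine case $H(4,1)$ with a single linear functional. Proposition~\ref{sumbne1} applies to every infinite rank 2 root system with $a \geq b > 1$, including $H(2,2) = \widetilde{A}_1$ and every symmetric or nonsymmetric hyperbolic system with $b > 1$; in each such system no sum of two real roots is real, so the hypotheses of (i) and (ii) are vacuously satisfied and (iii) is immediate. The remaining cases are therefore the affine $H(4,1) = A_2^{(2)}$ and the hyperbolic $H(a,1)$ with $a \geq 5$.

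For $H(a,1)$ with $a \geq 5$, I reduce a general real sum $\alpha + \beta \in \Delta^{\re}$ to the simple-root case $\alpha = \alpha_i$ by writing $\alpha = w\alpha_i$ and applying $w^{-1}$; because $W$ preserves root lengths, the length pattern of the triple $(\alpha, \beta, \alpha + \beta)$ is unchanged. After replacing $\beta$ by $-\beta$ if necessary, I may further assume $\beta \in \Delta^{\re}_+$, putting me exactly in the setting of Theorem~\ref{T-beq1}. For each of the finitely many $\beta$ listed there I read off its label $\alpha_j^{\ast\ast}$ from Lemma~\ref{acp} and hence its length: the sequences $\alpha_j^{LL}, \alpha_j^{LU}$ lie in the long $W\alpha_1$-orbit while $\alpha_j^{SU}, \alpha_j^{SL}$ lie in the short $W\alpha_2$-orbit. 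The most informative case is (iii) of Theorem~\ref{T-beq1}, whose two solutions are the long $\beta = \alpha_1 = \alpha_0^{LL}$ and the short $\beta = \alpha_1 + (a-1)\alpha_2 = \alpha_1^{SU}$, with corresponding sums $\alpha_1 + \alpha_2 = \alpha_0^{SL}$ (short) and $\alpha_1 + a\alpha_2 = \alpha_0^{LU}$ (long), realizing the patterns short $+$ long $=$ short and short $+$ short $=$ long. A parallel check of parts (i), (ii), (iv) exhausts the possibilities and establishes (i) and (ii); since Theorem~\ref{T-beq1} never produces a long $\beta$ when $\alpha = \alpha_1$ is long, (iii) follows as well.

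For $H(4,1)$, which falls outside the hypothesis $a > 4$ of Theorem~\ref{T-beq1}, I use instead the linear functional $\ell(x\alpha_1 + y\alpha_2) := y - 2x$. The identity $|x\alpha_1 + y\alpha_2|^2 = 2(y-2x)^2$ specializes to $|\ell(\alpha)| = 2$ for long real roots and $|\ell(\alpha)| = 1$ for short real roots. Additivity of $\ell$ then gives $\ell(\alpha + \beta) \in \{-2, 0, 2\}$ for two shorts, $\{-3, -1, 1, 3\}$ for short $+$ long, and $\{-4, 0, 4\}$ for two longs; intersecting with the admissible real-root values $\{\pm 1, \pm 2\}$ forces the sum to be long, short, or nonexistent respectively, yielding (i), (ii), (iii) in one stroke. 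The overall argument is really just bookkeeping; the only step requiring genuine care is correctly classifying each $\beta$ appearing in Theorem~\ref{T-beq1} as short or long via Lemma~\ref{acp}, which I expect to be mechanical.
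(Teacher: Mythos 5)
Your proof is correct and follows essentially the same route the paper sketches: Proposition~\ref{sumbne1} disposes of all $a\ge b>1$ cases, the Weyl-group reduction plus the case list of Theorem~\ref{T-beq1} handles $H(a,1)$ for $a\ge5$, and a "which line does the root lie on" argument handles $H(4,1)$. Your functional $\ell(x\alpha_1+y\alpha_2)=y-2x$ (short roots on $y=2x\pm1$, long roots on $y=2x\pm2$) is in fact the correct form of the lines the paper alludes to, and the length classifications of the roots appearing in Theorem~\ref{T-beq1} all check out against Lemma~\ref{acp}.
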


We note that (i) and (iii) are not true in finite root systems of type $A_2$ or $G_2$. However there is a slightly weaker result that holds in any symmetrizable system:
\begin{thm}
Let $\Delta$ be a symmetrizable root system and suppose $\alpha,\beta,\alpha+\beta\in\Delta^{\re}$.
\begin{enumerate}
\item If $|\alpha|^2=|\beta|^2$, then $|\alpha+\beta|^2=a|\alpha|^2$ for some positive integer $a$.
\item If $|\alpha|^2\ne|\beta|^2$, then $|\alpha+\beta|^2=\min(|\alpha|^2,|\beta|^2)$.
\end{enumerate}
\end{thm}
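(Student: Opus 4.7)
Both parts rely on two basic facts valid in any symmetrizable root system: (a)~if $\g\in\Delta^\re$ and $\d$ is any root, then $\langle\d,\g^\vee\rangle=2(\d,\g)/|\g|^2\in\Z$; and (b)~every real root has strictly positive squared norm.

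For part~(i), write $L:=|\a|^2=|\b|^2$ and set $n:=\langle\a,\b^\vee\rangle\in\Z$, which is an integer by (a). Then $2(\a,\b)=nL$, so
$$|\a+\b|^2 \;=\; 2L + 2(\a,\b) \;=\; (n+2)L.$$
By (b) applied to $\a+\b$, this quantity is strictly positive, so $a:=n+2$ is a positive integer, as required.

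For part~(ii), I would assume without loss of generality that $p:=|\a|^2<q:=|\b|^2$ and again set $n:=\langle\a,\b^\vee\rangle\in\Z$, $r:=|\a+\b|^2$. A direct expansion gives $r=p+(n+1)q$. Positivity of $r$, together with $p<q$, already rules out $n\le-2$ (else $r\le p-q<0$), so $n\ge-1$. The key step is to exclude $n\ge 0$: apply (a) to the real root $\a+\b$ with $\d=\a$ to obtain
$$\langle\a,(\a+\b)^\vee\rangle \;=\; \frac{2(\a,\a+\b)}{r} \;=\; \frac{2p+nq}{r}\;\in\;\Z.$$
When $n\ge0$, both $2p+nq>0$ and $r-(2p+nq)=q-p>0$, so this ratio lies strictly in the open interval $(0,1)$ and cannot be an integer. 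This contradiction forces $n=-1$, and substituting back gives $r=p=\min(|\a|^2,|\b|^2)$.

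I expect the only genuinely subtle point to be the strict use of $q-p>0$, which is precisely the hypothesis $|\a|^2\ne|\b|^2$ of part~(ii); the argument would collapse at exactly this step if the two norms coincided. An attractive feature of this approach is that it is entirely uniform: no reduction to a rank~2 subsystem is needed, and no case analysis among the rank~2 possibilities ($B_2$, $G_2$, affine $A_2^{(2)}$, or the hyperbolic $H(a,1)$) is required. Everything follows from integrality of Cartan integers for real roots and positivity of their squared norms.
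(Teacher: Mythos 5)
Your proof is correct, but it takes a genuinely different route from the paper. The paper reduces to the rank~2 subsystem $\mathbb{Z}\{\a,\b\}\cap\Delta$ and then argues by cases: the finite types $A_2$, $B_2$, $G_2$ are checked directly, and the infinite rank~2 case is deduced from the preceding theorem on lengths of sums (which itself rests on the staircase lemmas and Theorem~\ref{T-beq1}). You instead give a uniform computation: expanding $|\a+\b|^2$ and using that $n=\langle\a,\b^\vee\rangle\in\Z$ and that real roots have positive norm yields $(n+2)|\a|^2>0$ in case (i), and in case (ii) the integrality of $\langle\a,(\a+\b)^\vee\rangle=(2p+nq)/r$ forces $n=-1$, since for $n\ge0$ that ratio would lie strictly between $0$ and $1$. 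Both of your ingredients are standard for symmetrizable Kac--Moody root systems, and since $\a$, $\b$, $\a+\b$ necessarily lie in the same connected component, the normalization of the invariant form causes no trouble. What each approach buys: the paper's proof is essentially free given the machinery already developed in Sections~2--3, whereas yours is self-contained, requires no classification of rank~2 subsystems, and in case (i) even identifies the integer as $a=\langle\a,\b^\vee\rangle+2$. The one point worth making explicit if you write this up is the justification of your fact (a), namely that $\langle\d,\g^\vee\rangle\in\Z$ for $\g$ real and $\d$ in the root lattice (e.g.\ via $\g^\vee=w\a_i^\vee$ and integrality of the Cartan matrix entries); with that stated, your argument stands as a cleaner proof than the one in the paper.
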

\begin{proof}
We only need to consider the rank 2 subsystem $\mathbb{Z}\{\a,\b\}\cap\Delta$. These results are easily shown to be true if the subsystem has finite type $A_2$, $B_2$, or $G_2$, and they follow from the previous theorem if the subsystem is infinite.
\end{proof}

We mention the following lemma which has many  applications.

\begin{lemma} Let $\frak g$ be a symmetrizable Kac--Moody algebra. Let 
$\a,\b\in\Delta^{\re}$ be real roots with $\a\neq \b$. If $\a+\b$ and $\a-\b$ are not roots, then no other integral linear combination of $\a$ and $\b$ is a root.
 \end{lemma}
 
\begin{proof} The $\a$-string through $\b$ consists only of $\b$ and the $\b$-string through $\a$ consists only of $\a$. Thus no other integral linear combination of $\a$ and $\b$ can be a root. \end{proof}

\section{Subsystems}\label{Rk2subsys}

Root systems can be used to construct Coxeter groups, Kac--Moody algebras and Kac--Moody groups. 
These structures lead to two different concepts of 
subsystem. In this section, we describe two distinct types of root subsystem and show  that the two concepts usually coincide, but not always. We also classify all subsystems of infinite rank 2 root systems.

 Suppose that $\Delta$ is a symmetrizable root system with simple roots $\Pi=\{\alpha_1,\dots ,\alpha_{\ell}\}$.
Let $W=W(\Delta)$ be the Weyl group and  let $\Delta^\re=W\Pi$ denote the real roots.

For  $\Gamma\subseteq\Delta^\re$,  the  {\it reflection subgroup} generated by $\Gamma$ is defined as
  $$W_\Gamma = \langle w_\alpha : \alpha \in \Gamma \rangle.$$
Then $W_\Gamma$ is also a Coxeter group
and its root system is
 $$\Phi(\Gamma)= W_\Gamma\cdot \Gamma.$$ 
We call $\Phi(\Gamma)$ a \emph{$\Phi$-subsystem} (also noted in [C], Proposition 7). Note that a $\Phi$-subsystem consists entirely of real roots.

Let $\alpha$ be any real root. Then there is a corresponding pair of root vectors $x_\alpha$ and $x_{-\alpha}$ in $\mathfrak{g}=\mathfrak{g}(\Delta)$ which generate a subalgebra
isomorphic to $\mathfrak{sl}_2$.
We denote this subalgebra by $\mathfrak{sl}_2(\alpha)$.
Now let $\Gamma\subseteq \Delta^{\re}$. We may define the fundamental Kac--Moody subalgebra $\mathfrak{g}_\Gamma$ corresponding to $\Gamma$ to be the subalgebra generated by 
$$\{ \mathfrak{h}, \mathfrak{sl}_2(\alpha) : \alpha \in \Gamma \}$$

 Then $\mathfrak{g}_\Gamma$ is a Kac--Moody algebra and its root system is
$$\Delta(\Gamma)= \mathbb{Z}\Gamma \cap \Delta,$$
that is the set of all roots in $\Delta$ that can be written as an integer linear combination of elements of $\Gamma$.
We call this a \emph{$\Delta$-subsystem}. The Kac--Moody subalgebra of \cite{FN}, Theorem 3.1 is of this type. We also define $\Delta^\re(\Gamma)=\mathbb{Z}\Gamma\cap\Delta^\re$.

\subsection{Subsystems corresponding to submatrices}
In this section we discuss one of the easiest ways to construct subsystems, and show that the notions of $\Delta$- and $\Phi$-subsystems coincide in this case.
Let $\mathfrak{g}$ be a Kac--Moody algebra with generalized Cartan matrix $A=(a_{ij})_{i,j\in I}$, $I=\{1,2,\dots ,\ell\}$, Cartan subalgebra $\mathfrak{h}$ of dimension $2\ell-\rank(A)$, simple roots $\Pi=\{\alpha_1,\dots,\alpha_{\ell}\}\subseteq \mathfrak{h}^{\ast}$ and simple 
coroots $\Pi^{\vee}=\{\alpha_1^{\vee},\dots,\alpha_{\ell}^{\vee}\} \subseteq\mathfrak{h}$. Let $Q$ denote the root lattice of $\mathfrak{g}$ and let $\mathfrak{g}=\mathfrak{h}\oplus\left(\bigoplus_{\alpha\in Q\backslash\{0\}}\mathfrak{g}^{\alpha}\right)$ denote the root space decomposition. Let $\Delta$ denote the set of all roots.

\medskip 
\noindent  Let $B=(a_{ij})_{i,j\in K}$ be a submatrix of $A$ for some $K\subseteq I$ with $|K|=\ell_0$. Let $\mathfrak{h}(B)$ be a subspace of $\mathfrak{h}$ of dimension $2\ell_0-\rank(A_0)$ containing $\Pi(B)^{\vee}=\{\alpha_i^{\vee} \mid i\in K\},$ and such that $\Pi(B)=\{\alpha_i|_{\mathfrak{h}(B)^{\ast}} \mid i \in K\}$ is linearly independent. 
Set $Q_0=\bigoplus_{i\in K} \mathbb{Z}\alpha_i$. Then 
$$\mathfrak{g}_0\cong \mathfrak{h}(B)\oplus\left(\bigoplus_{\alpha\in Q_0\backslash\{0\}} \mathfrak{g}^{\alpha}\right)$$
is the Kac--Moody algebra of $B$ with Cartan subalgebra $\mathfrak{h}(B)$, simple roots $\Pi(B)$ and simple coroots $\Pi(B)^{\vee}$ ([K], Exercise 1.2).
We identify $Q_0\subseteq Q\subset \mathfrak{h}^\ast$ with $\Z\Pi(B)\subset\mathfrak{h}(B)^\ast$ in the obvious way.







\begin{proposition} (Proposition 6, [Mo]) $\;$Let $A=(A_{ij})_{i,j\in I}$ be a generalized Cartan matrix with $I=\{1,2,\dots ,\ell\}$. Let 
$K\subset I$, $K\neq \varnothing$. Let $\Pi(A)=\{\alpha_1,\dots ,\alpha_{\ell}\}$ be the simple roots of the root system $\Delta(A)$. Let $B=(A_{ij})_{i,j\in K}$. Let $\Delta(B)$ denote the root system corresponding to $B$. Let 
$\Pi(B)=\Pi(A)\cap\Delta(B)$. Then $\Pi(B)\neq\varnothing$ and $\Delta(B)$  has the properties:
$$\Delta(B)=\Delta\cap \Z\Pi(B)$$
where $\Z\Pi(B)$ denotes all integral linear combinations of $\Pi(B)$
and
\begin{align*}
\Delta(B)^{\re}&=\Delta^{\re}\cap \Z\Pi(B),\\
\Delta(B)^{\im}&=\Delta^{\im}\cap \Z\Pi(B).
\end{align*}
\end{proposition}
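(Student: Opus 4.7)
The plan is to identify $\mathfrak{g}(B)$ with the subalgebra $\mathfrak{g}_0 \subseteq \mathfrak{g}(A)$ constructed in the paragraph preceding the proposition, and then compare the two root-space decompositions on the common sublattice $Q_0 = \bigoplus_{i \in K} \mathbb{Z}\alpha_i$.

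First I would show that $\Pi(B) = \{\alpha_i : i \in K\}$, which in particular gives $\Pi(B) \neq \varnothing$ since $K \neq \varnothing$. Under the identification $Q_0 \subseteq Q$ set up before the proposition, each $\alpha_i$ with $i \in K$ is simultaneously a simple root of $B$ and of $A$, hence lies in $\Pi(A) \cap \Delta(B) = \Pi(B)$. Conversely, any $\alpha_j \in \Pi(B)$ lies in $\Delta(B) \subseteq Q_0$, and the linear independence of $\Pi(A)$ in $Q$ forces $j \in K$. Thus $\mathbb{Z}\Pi(B) = Q_0$.

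Next I would establish the key identity of root spaces:
\[
\mathfrak{g}(A)^{\alpha} = \mathfrak{g}(B)^{\alpha} \quad \text{for all } \alpha \in Q_0 \setminus \{0\}.
\]
The inclusion $\supseteq$ follows from the inclusion $\mathfrak{g}(B) \hookrightarrow \mathfrak{g}(A)$, which in turn comes from matching Serre relations: the defining Serre relations for $\mathfrak{g}(B)$ are exactly those of $\mathfrak{g}(A)$ restricted to indices in $K$, so the obvious map of generators extends and is injective because the triangular decomposition of $\mathfrak{g}(B)$ remains faithful inside $\mathfrak{g}(A)$. For $\subseteq$, I would use the standard fact that every element of $\mathfrak{g}(A)^{\alpha}$ with $\alpha$ a positive root is a sum of iterated brackets $[e_{i_1},[e_{i_2},\dots,e_{i_m}]]$ with $\alpha_{i_1}+\cdots+\alpha_{i_m}=\alpha$; since $\Pi(A)$ is linearly independent and $\alpha \in Q_0$, each $i_j$ must lie in $K$, so every such bracket already belongs to the subalgebra generated by $\{e_i : i \in K\}$, i.e.\ to $\mathfrak{g}(B)$. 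The negative-root case is symmetric. This step is the main obstacle: the bookkeeping of iterated brackets inside a Kac--Moody algebra needs to be handled carefully, but the linear independence of the simple roots reduces the argument to a pure weight-counting observation.

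The first assertion follows immediately:
\[
\Delta(B) = \{\alpha \in Q_0 \setminus \{0\} : \mathfrak{g}(B)^{\alpha} \neq 0\} = \{\alpha \in Q_0 \setminus \{0\} : \mathfrak{g}(A)^{\alpha} \neq 0\} = \Delta \cap \mathbb{Z}\Pi(B).
\]
For the real/imaginary split I would invoke symmetrizability: the invariant bilinear form on $\mathfrak{h}(B)^{*}$ is the restriction of the one on $\mathfrak{h}^{*}$ (both arise from the same diagonal scaling matrix, read off on indices in $K$), so $(\alpha,\alpha)_B = (\alpha,\alpha)_A$ for every $\alpha \in Q_0$. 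Combined with the standard dichotomy $\Delta^{\re} = \{\alpha \in \Delta : (\alpha,\alpha) > 0\}$ and $\Delta^{\im} = \{\alpha \in \Delta : (\alpha,\alpha) \le 0\}$, this yields $\Delta(B)^{\re} = \Delta^{\re} \cap \mathbb{Z}\Pi(B)$ and $\Delta(B)^{\im} = \Delta^{\im} \cap \mathbb{Z}\Pi(B)$, completing the proof.
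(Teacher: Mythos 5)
The paper does not actually prove this proposition: it is quoted verbatim from Moody ([Mo], Proposition~6) and used as a black box, so there is no in-paper argument to compare yours against. On its own terms, your proof is correct and complete in the setting the paper works in. Your identification $\Pi(B)=\{\alpha_i : i\in K\}$ and the weight-counting argument for $\mathfrak{g}(A)^{\alpha}=\mathfrak{g}(B)^{\alpha}$ ($\alpha\in Q_0\setminus\{0\}$) are exactly right: $\mathfrak{n}_{\pm}$ is generated by the $e_i$ (resp.\ $f_i$), and linear independence of $\Pi(A)$ forces every index occurring in a bracket of weight $\alpha\in Q_0$ to lie in $K$. Note that the embedding $\mathfrak{g}(B)\hookrightarrow\mathfrak{g}(A)$ you derive from ``matching Serre relations'' is precisely what the paper already records as [K], Exercise~1.2 in the paragraph before the proposition, so you could simply cite that; deriving it from the Serre presentation silently invokes the Gabber--Kac theorem and hence symmetrizability. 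The same caveat applies to your real/imaginary split: the dichotomy $\Delta^{\re}=\{\alpha\in\Delta : (\alpha,\alpha)>0\}$, $\Delta^{\im}=\{\alpha\in\Delta : (\alpha,\alpha)\le 0\}$ is Kac's Proposition~5.2 and holds only for symmetrizable $A$ (and you should say that the restricted form is \emph{a} standard form for $B$, the sign of the norm being independent of the choice of symmetrizer). Since the proposition as quoted is stated for an arbitrary generalized Cartan matrix while Section~4 of the paper assumes symmetrizability throughout, your proof covers every case the paper needs, but it is not a proof of Moody's statement in full generality; Moody's original argument does not route through the bilinear form.
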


\begin{proposition}  Using the notation above, define $\Phi(B)=W_{\Pi(B)}(\Pi(B))$. Then 
$$\Delta(B)^{\re}=\Phi(B).$$
\end{proposition}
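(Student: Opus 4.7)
The plan is to prove the two inclusions $\Phi(B)\subseteq\Delta(B)^{\re}$ and $\Delta(B)^{\re}\subseteq\Phi(B)$ separately, using the preceding proposition's identification $\Delta(B)^{\re}=\Delta^{\re}\cap\mathbb{Z}\Pi(B)$ throughout.

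For the inclusion $\Phi(B)\subseteq\Delta(B)^{\re}$, I would first note that every $\alpha\in\Pi(B)\subseteq\Pi(A)$ is already a real root of $\Delta=\Delta(A)$. Next, I observe that each generator $w_\alpha$ of $W_{\Pi(B)}$ (with $\alpha\in\Pi(B)$) lies in $W(A)$ and hence permutes $\Delta^{\re}$; moreover, because $w_\alpha(\beta)=\beta-\langle\beta,\alpha^\vee\rangle\alpha$ has its correction term inside $\mathbb{Z}\Pi(B)$, these reflections also preserve the sublattice $\mathbb{Z}\Pi(B)$. Applying arbitrary words in these generators to $\Pi(B)$ therefore lands in $\Delta^{\re}\cap\mathbb{Z}\Pi(B)=\Delta(B)^{\re}$, giving $\Phi(B)\subseteq\Delta(B)^{\re}$.

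For the opposite inclusion, let $\alpha\in\Delta(B)^{\re}$. By the definition of real roots in the Kac--Moody algebra $\mathfrak{g}_0$ attached to $B$, we have $\alpha\in W(B)\cdot\Pi(B)$, where $W(B)$ is the Weyl group of $B$ acting on $\mathfrak{h}(B)^*$. The heart of the argument is to identify this action of $W(B)$ on $\mathbb{Z}\Pi(B)\subseteq\mathfrak{h}(B)^*$ with the restriction of $W_{\Pi(B)}\subseteq W(A)$ to the same lattice: for $i\in K$ and $j\in K$, the simple reflection $w_i^{(B)}\in W(B)$ acts by $w_i^{(B)}(\alpha_j)=\alpha_j-a_{ij}\alpha_i$, while the simple reflection $w_i\in W(A)$ acts by $w_i(\alpha_j)=\alpha_j-a_{ij}\alpha_i$, and the entries $a_{ij}$ with $i,j\in K$ are precisely the entries of the submatrix $B$. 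Hence $w_i^{(B)}\mapsto w_i|_{\mathbb{Z}\Pi(B)}$ extends to a surjection of Weyl groups acting on $\mathbb{Z}\Pi(B)$, and so $W(B)\cdot\Pi(B)=W_{\Pi(B)}\cdot\Pi(B)=\Phi(B)$. In particular $\alpha\in\Phi(B)$.

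The main obstacle is the compatibility step in the second paragraph: one must be careful that the reflection $w_i\in W(A)$, which a priori is an operator on the full Cartan $\mathfrak{h}^*$, really does coincide with $w_i^{(B)}$ when restricted to $\mathbb{Z}\Pi(B)$. This reduces to observing that the reflection formula only uses the pairings $\langle\alpha_j,\alpha_i^\vee\rangle$ for $j\in K$, which are exactly the columns of $B$ indexed by $K$, so no information outside of $B$ intervenes. Once this identification is in place, both inclusions follow immediately from the characterization of real roots as the Weyl orbit of the simple roots.
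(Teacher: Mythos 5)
Your proof is correct and follows essentially the same route as the paper: one inclusion via the observation that the reflections $w_\alpha$, $\alpha\in\Pi(B)$, preserve both $\Delta^{\re}$ and the lattice $\mathbb{Z}\Pi(B)$, and the other via the identification $\Delta(B)^{\re}=\Delta^{\re}\cap\mathbb{Z}\Pi(B)$ from the preceding proposition. The only difference is that you carefully justify the direction the paper dismisses as ``clear,'' by matching the $W(B)$-action on $\mathbb{Z}\Pi(B)$ with the restricted action of $W_{\Pi(B)}$ -- a worthwhile detail, correctly handled.
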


\begin{proof}  Our claim is that
$$\Delta^{\re}\cap \Z\Pi(B) = W_{\Pi(B)}(\Pi(B)).$$
The inclusion $\Delta(B)^{\re}\subseteq \Phi(B)$ is clear. To prove the reverse inclusion, let $\alpha\in \Phi(B)$. Then $\alpha\in \Delta^{\re}$ and $\alpha\in \Z\Pi(B)$ by definition. Hence $\Phi(B)\subseteq \Delta(B)^{\re}$.
\end{proof}

The following lemma establishes a useful property of $\Delta(B)$ subsystems.

\begin{lemma} $\Delta(B)^{\re}$ consists of the roots that are  integral sums of the simple roots corresponding to $B$. 
\end{lemma}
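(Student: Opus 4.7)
The plan is to deduce this lemma as an immediate consequence of the characterization $\Delta(B)^{\re}=\Delta^{\re}\cap\Z\Pi(B)$ established in the preceding proposition (attributed to Morita). The real part $\Delta(B)^{\re}$ is defined precisely as the intersection of the real roots of the ambient system with the $\Z$-span of the $B$-simple roots, so a closure-type statement about integral combinations of elements of $\Pi(B)$ should fall out of the definition.

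First I would fix a finite collection of simple roots $\alpha_{i_1},\dots,\alpha_{i_m}\in\Pi(B)$ and integer coefficients $c_1,\dots,c_m\in\Z$, and form the integral combination $\beta=\sum_{j=1}^m c_j\alpha_{i_j}$. By construction $\beta\in\Z\Pi(B)$. The content of the lemma is that if $\beta$ happens to be a real root of $\Delta$, then $\beta$ already lies in $\Delta(B)^{\re}$. But $\beta\in\Delta^{\re}$ is precisely the hypothesis one needs, so $\beta$ lies in both sets whose intersection defines $\Delta(B)^{\re}$, and hence in $\Delta(B)^{\re}$ itself.

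There is no substantive obstacle here; the only point requiring any care is the implicit assumption that the integral combination under consideration is actually a real root of the ambient system (without this assumption the statement is vacuous, since $\Delta(B)^{\re}$ consists only of real roots). The lemma is essentially a restatement, in a form convenient for later applications in Section~\ref{Rk2subsys}, of one inclusion of the identity $\Delta(B)^{\re}=\Delta^{\re}\cap\Z\Pi(B)$, and so requires no independent calculation.
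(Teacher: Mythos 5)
Your proof is correct and follows essentially the same route as the paper's: both arguments reduce immediately to the characterization $\Delta(B)^{\re}=\Delta^{\re}\cap\Z\Pi(B)$ together with the fact that the lattice $\Z\Pi(B)$ is closed under integral combinations. Your explicit remark that the closure statement is only meaningful for combinations that happen to be real roots is a sensible reading of the (somewhat informally stated) lemma and matches the paper's intent.
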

\begin{proof} We have $\Delta(B)^{\re}=\Delta^{\re}\cap \Z\Pi(B)$, but since $B$ is a subsystem arising from a submatrix of the generalized Cartan matrix, $B$ has an associated root lattice $Q_0=\bigoplus_{i\in K} \mathbb{Z}\alpha_i$ which is closed under taking integral sums. (See also Section 4, in particular Lemma 6, of [C]).
\end{proof}

Since $\Delta(B)^{\re}=\Phi(B)$, we have the following.

\begin{proposition} Using the notation above,  $\Phi(B)$ consists of the roots that are integral sums of the simple roots corresponding to $B$.

\end{proposition}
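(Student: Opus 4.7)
The plan is to combine the two preceding results directly: the proposition that identifies $\Phi(B)$ with $\Delta(B)^{\re}$, and the lemma that shows $\Delta(B)^{\re}$ enjoys the closure property. Since equality of sets automatically transfers every set-theoretic property, the proof should be essentially a one-line invocation once the statement is interpreted correctly.

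More concretely, I would first restate $\Phi(B) = \Delta(B)^{\re} = \Delta^{\re} \cap \mathbb{Z}\Pi(B)$ using the earlier proposition. Now suppose we are given elements $\alpha_{i_1},\ldots,\alpha_{i_k} \in \Pi(B)$ and integers $c_1,\ldots,c_k$ such that the combination $\gamma = \sum_{j} c_j \alpha_{i_j}$ happens to lie in $\Delta^{\re}$. Then $\gamma \in \mathbb{Z}\Pi(B)$ by construction, and $\gamma \in \Delta^{\re}$ by hypothesis, so $\gamma \in \Delta^{\re} \cap \mathbb{Z}\Pi(B) = \Phi(B)$. This is exactly the closure claim. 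Equivalently, if $\alpha,\beta \in \Phi(B)$ and $\alpha + \beta \in \Delta^{\re}$, then $\alpha+\beta \in \mathbb{Z}\Pi(B)$ and is a real root, hence lies in $\Phi(B)$.

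The only real obstacle is one of interpretation rather than mathematics: the phrase ``closed under taking integral sums'' cannot mean closure under \emph{arbitrary} $\mathbb{Z}$-linear combinations, since plenty of such combinations are not roots at all (they may be imaginary roots, or not roots of $\Delta$). The intended meaning, consistent with the proof of the preceding lemma, is that whenever an integer combination of elements of $\Pi(B)$ lies in $\Delta^{\re}$, that element already belongs to $\Phi(B)$. With this understood, the argument reduces immediately to the set equality $\Phi(B) = \Delta(B)^{\re}$ already proved, and no further work is required.
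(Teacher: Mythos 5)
Your proof is correct and is essentially the argument the paper intends: the sentence immediately preceding the proposition already announces that the closure property transfers from $\Delta(B)^{\re}$ to $\Phi(B)$ via the equality $\Phi(B)=\Delta(B)^{\re}=\Delta^{\re}\cap\Z\Pi(B)$, and your reading of ``closed under integral sums'' (only those integral combinations that happen to be real roots are claimed to lie in the subsystem) matches the paper's usage. The paper's written proof instead takes $\alpha,\beta\in\Phi(B)$ and checks, via the $\alpha$-root string through $\beta$, that every combination $s\alpha+t\beta$ lies in $\Z\Pi(B)$, but this reduces to the same triviality you use, namely that $\Z\Pi(B)$ is closed under integer linear combinations.
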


\begin{proof} Let $\alpha,\beta\in \Phi(B)$.
We recall that $$-p\a+\beta,\ \dots,\ \b-\a,\ \beta, \ \a+\b, \ \dots ,\ q\a+\beta$$ is the $\alpha$--string through $\beta$. We claim that for $s,t\in\Z$, $s\alpha+t\beta\in \Z\Pi(B)$. Let
$$\alpha=\sum_{i\in K} a_i\alpha_i \quad\text{and  \ }
\beta=\sum_{i\in K} b_i\alpha_i.$$
Writing elements of the root string in terms of their coordinates on the root lattice $Q_0$, it is clear that they are all elements of $\Z\Pi(B)$. For example
$-p\a+\beta=\sum_{i\in K}(-pa_i+b_i)\alpha_i.$
\end{proof}

\subsection{Classification of $\Phi$-subsystems in rank 2}
Our next step is to classify the $\Phi$-subsystems in any infinite rank 2 root system using explicit formulas for the Weyl group reflections. Let $\Delta$ be a root system of type $H(a,b)$ for $a\ge b$ and $ab\ge4$.
Let $\Gamma\subseteq \Delta^{\re}$ be nonempty.

First we note that $\Phi(\Gamma)$ is closed under negation, since $w_\a\a=-\a$. So, using the formulas of Lemma~\ref{L-neg},
$$\Phi(\Gamma)=\{\alpha^{LL}_j,\alpha^{LU}_{-j-1},\alpha^{SU}_k,\alpha^{SL}_{-k-1} \mid j\in I^L,\, k\in I^S\},$$
for some index sets $I^L,I^S\subseteq \mathbb{Z}$.
Every  real root has the form $\a=w\a_i$ for $i=1,2$ and $w\in W$, so the reflection in $\alpha$ is $w_\a=ww_iw^{-1}$.
We obtain the following formulas for the reflections corresponding to each real root:
$$w^{LL}_j = w^{LU}_{-j-1}= (w_1w_2)^{2j}w_1, \qquad  w^{SU}_j=w^{SL}_{-j-1}=(w_2w_1)^{2j}w_2.$$
We can use this to easily prove formulas for the action of a reflection on a real root:
\begin{lemma}\label{reflact} For all $j,k\in\mathbb{Z}$,
\begin{align*}
w^{LL}_k\a^{LL}_j&=-\a^{LL}_{2k-j},& w^{SU}_k\a^{SU}_j&= -\a^{SU}_{2k-j},\\
w^{LL}_k\a^{SU}_j&=-\a^{SU}_{-2k-j-1},& w^{SU}_k\a^{LL}_j&= -\a^{LL}_{-2k-j-1}.
\end{align*}
\end{lemma}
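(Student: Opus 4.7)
The plan is to introduce the rotation element $\rho := w_1 w_2 \in W$ and exploit the structure of the infinite dihedral Weyl group. Since $W \cong D_\infty$ is generated by the involutions $w_1, w_2$, the element $\rho$ has infinite order with inverse $\rho^{-1} = w_2 w_1$, and the dihedral relation $w_i \rho w_i = \rho^{-1}$ holds for $i=1,2$. Iterating gives the key commutation rule
\[
w_i\, \rho^n = \rho^{-n}\, w_i \quad\text{for all } n\in\Z,\ i=1,2.
\]

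Next I would rewrite every ingredient of the lemma in terms of $\rho$:
\[
\alpha^{LL}_j = \rho^j \alpha_1, \qquad \alpha^{SU}_j = \rho^{-j} \alpha_2,
\]
\[
w^{LL}_k = \rho^{2k} w_1, \qquad w^{SU}_k = \rho^{-2k} w_2.
\]
I also need the four base identities $w_1\alpha_1 = -\alpha_1$, $w_2 \alpha_2 = -\alpha_2$, $w_1 \alpha_2 = -\rho\, \alpha_2$, and $w_2 \alpha_1 = -\rho^{-1} \alpha_1$. The last two follow immediately from $\rho \alpha_2 = w_1 w_2 \alpha_2 = -w_1 \alpha_2$ and $\rho^{-1}\alpha_1 = w_2 w_1 \alpha_1 = -w_2\alpha_1$.

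With these tools, each of the four formulas reduces to a two-line computation. For example,
\[
w^{LL}_k\, \alpha^{LL}_j = \rho^{2k} w_1 \rho^j \alpha_1 = \rho^{2k-j} w_1 \alpha_1 = -\rho^{2k-j}\alpha_1 = -\alpha^{LL}_{2k-j},
\]
and for a mixed case,
\[
w^{LL}_k\, \alpha^{SU}_j = \rho^{2k} w_1 \rho^{-j}\alpha_2 = \rho^{2k+j} w_1\alpha_2 = -\rho^{2k+j+1}\alpha_2 = -\alpha^{SU}_{-2k-j-1}.
\]
The remaining two identities $w^{SU}_k \alpha^{SU}_j = -\alpha^{SU}_{2k-j}$ and $w^{SU}_k \alpha^{LL}_j = -\alpha^{LL}_{-2k-j-1}$ are completely analogous, with the roles of $w_1,w_2$ and of long/short roots exchanged.

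There is no real obstacle here; the only subtle step is recognizing the asymmetry responsible for the shift by $-1$ in the mixed formulas, which traces back to the relation $w_1\alpha_2 = -\rho\,\alpha_2$ (as opposed to $w_1\alpha_1 = -\alpha_1$). Once the dihedral commutation rule is established, the proof is a few lines of bookkeeping.
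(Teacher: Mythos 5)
Your proof is correct, and it is essentially the computation the paper has in mind: the paper gives no explicit proof but derives the lemma directly from the formulas $w^{LL}_k=(w_1w_2)^{2k}w_1$, $w^{SU}_k=(w_2w_1)^{2k}w_2$ and the definitions $\a^{LL}_j=(w_1w_2)^j\a_1$, $\a^{SU}_j=(w_2w_1)^j\a_2$, which is exactly your calculation packaged via the rotation $\rho=w_1w_2$ and the dihedral relation $w_i\rho^n=\rho^{-n}w_i$. All four identities check out, including the shift by $-1$ coming from $w_1\a_2=-\rho\a_2$ and $w_2\a_1=-\rho^{-1}\a_1$.
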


\begin{lemma} \label{reflincl} Given integers $j$ and $k$:
\begin{enumerate} 
\item\label{reflincl-l} If $j,k\in I^L$, then $j+(k-i)\mathbb{Z}\subseteq I^L$.
\item\label{reflincl-s} If $j,k\in I^S$, then $j+(k-j)\mathbb{Z}\subseteq I^S$.
\item\label{reflincl-ls}If $j\in I^L$, $k\in I^S$, then $j+(2j+2k+1)\mathbb{Z}\subseteq I^L$ and $k+(2j+2k+1)\mathbb{Z}\subseteq I^S$.
\end{enumerate}
\end{lemma}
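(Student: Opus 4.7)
The plan is to exploit the defining property of $\Phi(\Gamma)$: it is closed under reflections by its own real roots, and closed under negation (since $w_\alpha\alpha=-\alpha$). Once this is combined with the reflection formulas of Lemma~\ref{reflact}, each statement reduces to an arithmetic progression closure argument on the index sets $I^L$ and $I^S$.

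For part~\ref{reflincl-l}, suppose $j,k\in I^L$, so $\alpha^{LL}_j,\alpha^{LL}_k\in\Phi(\Gamma)$. Set $d=k-j$. By Lemma~\ref{reflact},
$$w^{LL}_k\alpha^{LL}_j = -\alpha^{LL}_{2k-j} = -\alpha^{LL}_{j+2d},\qquad w^{LL}_j\alpha^{LL}_k = -\alpha^{LL}_{2j-k} = -\alpha^{LL}_{j-d},$$
so after negating, both $j+2d$ and $j-d$ lie in $I^L$; in particular $\{j-d, j, j+d, j+2d\}\subseteq I^L$. I would then induct: if consecutive terms $j+md, j+(m+1)d$ are in $I^L$, reflecting the first at the second (via $w^{LL}_{j+(m+1)d}\alpha^{LL}_{j+md}=-\alpha^{LL}_{j+(m+2)d}$) produces $j+(m+2)d$, and reflecting the second at the first produces $j+(m-1)d$. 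Hence $j+\mathbb{Z}d\subseteq I^L$, which is the desired conclusion (correcting the apparent typo ``$k-i$'' to ``$k-j$'').

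Part~\ref{reflincl-s} is proved by exactly the same inductive argument, using the formula $w^{SU}_k\alpha^{SU}_j=-\alpha^{SU}_{2k-j}$ from Lemma~\ref{reflact} in place of its long-root analogue.

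For part~\ref{reflincl-ls}, the strategy is to use the mixed reflection formulas of Lemma~\ref{reflact} to produce a second element in each index set, then reduce to parts~\ref{reflincl-l} and~\ref{reflincl-s}. Concretely, if $j\in I^L$ and $k\in I^S$, then
$$w^{SU}_k\alpha^{LL}_j=-\alpha^{LL}_{-2k-j-1},\qquad w^{LL}_j\alpha^{SU}_k=-\alpha^{SU}_{-2j-k-1},$$
so after negation $-2k-j-1\in I^L$ and $-2j-k-1\in I^S$. The difference in each case is
$$(-2k-j-1)-j=-(2j+2k+1),\qquad (-2j-k-1)-k=-(2j+2k+1).$$
Applying part~\ref{reflincl-l} to $\{j,\,-2k-j-1\}\subseteq I^L$ yields $j+(2j+2k+1)\mathbb{Z}\subseteq I^L$, and applying part~\ref{reflincl-s} to $\{k,\,-2j-k-1\}\subseteq I^S$ yields $k+(2j+2k+1)\mathbb{Z}\subseteq I^S$. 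There is no real obstacle here beyond bookkeeping on the indices; the only care required is to confirm the correct sign conventions in Lemma~\ref{reflact} and in the normalization of $\Phi(\Gamma)$ using Lemma~\ref{L-neg}, which fixes the chosen representatives $\alpha^{LL}_j$ and $\alpha^{SU}_k$ for positive real roots.
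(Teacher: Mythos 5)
Your proof is correct, and for parts (i) and (ii) it is essentially the paper's argument: closure of $\Phi(\Gamma)$ under negation and under reflections in its own roots, combined with the formulas of Lemma~\ref{reflact}, gives the two-sided induction along the arithmetic progression (and yes, ``$k-i$'' in the statement is a typo for ``$k-j$''). The only genuine difference is in part (iii): the paper runs a separate simultaneous induction, maintaining the hypothesis that $j+nd\in I^L$ and $k+nd\in I^S$ together and stepping $n$ up and down with alternating applications of $w^{SU}_k$ and $w^{LL}_j$, whereas you apply the mixed reflections exactly once to produce the single new indices $-2k-j-1\in I^L$ and $-2j-k-1\in I^S$, each at distance $2j+2k+1$ from the original, and then invoke parts (i) and (ii). Your reduction is cleaner and makes the extra induction in (iii) unnecessary; the paper's version establishes the slightly stronger bookkeeping fact that the long and short progressions advance in lockstep, which is convenient but not needed for the statement as written.
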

\begin{proof}
Suppose $I^S$ contains $\ell:=j+(n-1)(k-j)$ and $m:=j+n(k-j)$. Then
Lemma~\ref{reflact} shows that $j+(n+1)(k-j)=2\ell-m\in I^S$ and
$j+(n-2)(k-j)=2m-\ell \in I^S$. Part (i) now follows by induction, using separate cases for $\mathbb{Z}_{\geq 0}$ and $\mathbb{Z}_{< 0}$. Part (ii) is similar.

Let $d:=2j+2k+1$. Now suppose $j,j+nd\in I^L$ and $k, k+nd\in I^S$.
Then $j-(n+1)d= -2k-(j+nd)-1\in I^L$ and so $j+(n+1)d =2j-(j-(n+1)d)\in I^S$.
Similar arguments show that $j+(n-1)d\in I^S$ and $k+(n\pm1)d\in I^L$.
Part (iii) now follows by induction, using separate cases for $\mathbb{Z}_{\geq 0}$ and $\mathbb{Z}_{< 0}$.
\end{proof}

The following result shows how to classify the $\Phi$-subsystems in terms of their index sets. The proof is routine:
\begin{proposition}\label{Iclass} (\cite{CKMS},\cite{Sr})
\begin{enumerate} 
\item If $I^S$ is empty, then $I^L=r+d\mathbb{Z}$ for some $r,d\in\mathbb{Z}$ with $d\ge0$ and $0\le r<d$.
\item If $I^L$ is empty, then $I^S=r+d\mathbb{Z}$ for some $r,d\in\mathbb{Z}$ with $d\ge0$ and $0\le r<d$.
\item Otherwise, $I^L=r+(2d+1)\mathbb{Z}$ and $I^S=d-r+(2d+1)\mathbb{Z}$ for some $d\ge0$ and $-d\le r\le d$.
\end{enumerate}
\end{proposition}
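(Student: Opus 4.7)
The plan is to extract the arithmetic structure of $I^L$ and $I^S$ from the closure properties in Lemma~\ref{reflincl}, and then, in the mixed case, use Lemma~\ref{reflincl}(iii) to tie the two arithmetic progressions together.

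The first ingredient is a general fact: any nonempty subset $D\subseteq\Z$ closed under $(j,k)\mapsto j+n(k-j)$ for all $n\in\Z$ must be an arithmetic progression. The argument is the standard minimal-positive-difference one: if $|D|\ge 2$, let $d$ be the minimum of $|k-j|$ over distinct $j,k\in D$; closure immediately gives an entire coset $j_0+d\Z\subseteq D$, and any element of $D$ outside that coset would produce a pair with smaller positive difference, contradicting minimality. Applied with parts (i) and (ii) of Lemma~\ref{reflincl}, this settles cases (i) and (ii) of the Proposition, with $r$ chosen as the representative of the coset in $\{0,1,\dots,d-1\}$.

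For case (iii), both $I^L$ and $I^S$ are nonempty, and I would first rule out singletons. If, say, $I^L=\{j_0\}$, then applying Lemma~\ref{reflincl}(iii) to $j_0$ and any $k\in I^S$ forces the infinite coset $j_0+(2j_0+2k+1)\Z$ into $I^L$, contradicting $|I^L|=1$ since $2j_0+2k+1\ne 0$. The general fact above then produces arithmetic progressions $I^L=r_1+d_1\Z$ and $I^S=r_2+d_2\Z$ with $d_1,d_2\ge 1$. Lemma~\ref{reflincl}(iii), rewritten in terms of these APs, says $d_1\mid(2j+2k+1)$ and $d_2\mid(2j+2k+1)$ for every $j\in I^L$ and $k\in I^S$; the odd integer $2j+2k+1$ being divisible by $d_i$ forces each $d_i$ odd, and varying $k$ by $d_2$ inside $I^S$ then subtracting yields $d_1\mid 2d_2$, hence $d_1\mid d_2$ by oddness. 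Symmetrically $d_2\mid d_1$, and so $d_1=d_2=:2d+1$ for some $d\ge 0$.

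It remains to solve the congruence $2(r_1+r_2)+1\equiv 0\pmod{2d+1}$, i.e.\ $r_1+r_2\equiv d\pmod{2d+1}$, which is permissible since $2$ is invertible modulo the odd integer $2d+1$. Choosing the representative $r$ of $r_1$ in $\{-d,-d+1,\dots,d\}$---a complete residue system mod $2d+1$---gives $r_2\equiv d-r\pmod{2d+1}$, so one may write $I^L=r+(2d+1)\Z$ and $I^S=(d-r)+(2d+1)\Z$, as required. The only real obstacle is the step forcing the two common differences to coincide and be odd; this is where all three parts of Lemma~\ref{reflincl} must be combined, and the rest is routine bookkeeping of residues.
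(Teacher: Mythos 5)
Your proof is correct and follows essentially the same route as the paper: the heart of the argument, part (iii), is identical—apply Lemma~\ref{reflincl}(iii) to force $d_1\mid 2j+2k+1$, deduce that $d_1,d_2$ are odd and divide each other, and then solve $2(r_1+r_2)+1\equiv 0\pmod{2d+1}$ for the residues. The only (harmless) variation is in parts (i)--(ii), where you run a minimal-positive-difference argument directly on the closed set $I^L$, whereas the paper takes the gcd of differences of the generators' indices and then separately checks that the resulting coset indexes a $\Phi$-subsystem; your version also makes explicit the exclusion of singletons in case (iii), which the paper leaves implicit.
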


\begin{thm}\label{subsys}
Let $\Delta$ be an infinite rank 2 root system of type $H(a,b)$ with $a\ge b$ and $ab\ge4$.
Every nonempty $\Phi$-subsystem of $\Delta$ has simple roots, Cartan matrix, and inner product matrix given by one of the rows in Table~\ref{T-Phisubs}
where $\delta_d:= \eta_{d}-\eta_{d-1}$ and  $\epsilon_d:= \gamma_{d+1}-\gamma_d$.
In particular  all $\Phi$-subsystems of $\Delta$ have rank at most $2$.
\end{thm}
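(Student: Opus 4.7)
The plan is to combine Proposition~\ref{Iclass}, which enumerates every $\Phi$-subsystem $\Phi(\Gamma)$ by its pair of index sets $(I^L,I^S)$, with a direct computation of simple roots and Cartan data in each of the resulting cases. By Proposition~\ref{Iclass}, a nonempty $\Phi$-subsystem falls into one of three families: (a) $I^S=\emptyset$, $I^L=r+d\mathbb{Z}$; (b) $I^L=\emptyset$, $I^S=r+d\mathbb{Z}$; or (c) $I^L=r+(2d+1)\mathbb{Z}$, $I^S=d-r+(2d+1)\mathbb{Z}$. In each family, the value $d=0$ produces a single positive root and hence a rank-one ($A_1$) subsystem, which I would dispose of first.

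For the rank-two cases ($d>0$), I would identify the simple roots as the ambient-positive roots in $\Phi(\Gamma)$ that cannot be written as a positive $\mathbb{Z}$-combination of other ambient-positive subsystem roots. Using Lemma~\ref{staircase} (or Lemma~\ref{staircase2} when $b=1$) to order the four families $\alpha^{LL}_j,\alpha^{LU}_j,\alpha^{SU}_j,\alpha^{SL}_j$ by height, the natural candidates are: in case (a), $\alpha^{LL}_r$ together with $\alpha^{LU}_{d-r-1}$ (the two lowest positive long roots lying in the subsystem); in case (b), the analogous pair of short roots; in case (c), one long root and one short root chosen so that their index pair lies in the relevant residue class modulo $2d+1$. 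Lemma~\ref{reflact} then verifies that reflecting each candidate in the other reproduces the full arithmetic progressions of Proposition~\ref{Iclass}, showing both that the candidates generate $\Phi(\Gamma)$ and that the rank never exceeds two (giving the final assertion of the theorem).

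The Cartan matrix and inner product matrix follow from a direct computation using the coordinates $\alpha^{LL}_j=\eta_j\alpha_1+a\gamma_j\alpha_2$ and $\alpha^{SU}_j=b\gamma_j\alpha_1+\eta_j\alpha_2$ of Lemma~\ref{acp}, combined with the symmetrized form $(u,v)=[u]_S^T B [v]_S$. The resulting polynomials in $\eta_\ast,\gamma_\ast$ telescope via the recurrences $\eta_j=ab\gamma_j-\eta_{j-1}$ and $\gamma_{j+1}=\eta_j-\gamma_j$. For instance, in case (a) with $r=0$, the pairwise Cartan integer between $\alpha^{LL}_0=\alpha_1$ and $\alpha^{LU}_{d-1}$ reduces to $2\eta_{d-1}-ab\gamma_d=\eta_{d-1}-\eta_d=-\delta_d$, giving a symmetric subsystem of type $H(\delta_d,\delta_d)$; in case (c), the cross term $\eta_d-2\gamma_d$ collapses to $\epsilon_d$ via $\gamma_{d+1}=\eta_d-\gamma_d$, producing a (generally non-symmetric) Cartan matrix of the form $H(b\epsilon_d,a\epsilon_d)$ (up to the transpose convention recorded in the footnote of Section~\ref{realroots}).

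The main obstacle is the second step: pinning down the correct simple-root pair in each case. The choice depends on the sign of the parameter $r$ (which dictates whether the lowest positive root on a given branch is of type $LL$ or $LU$, respectively $SU$ or $SL$) and, in case (c), on parity considerations forced by the modulus $2d+1$. Once this identification is handled correctly, the remaining computations are routine telescoping applications of Lemma~\ref{acp}, and the shapes $\delta_d$ and $\epsilon_d$ emerge naturally.
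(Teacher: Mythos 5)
Your proposal is correct and follows essentially the same route as the paper: reduce to the three index-set families of Proposition~\ref{Iclass}, take $\a^{LL}_r,\a^{LU}_{d-r-1}$ (resp.\ the short or mixed analogues) as the basis, and compute the Cartan integers by telescoping with Lemma~\ref{acp}, obtaining $-\delta_d$ and $-\epsilon_d$ exactly as in the paper's worked computation of $c_{12}$ for type $\text{II}_L$. The only quibble is that in case (c) the paper's convention $H(a,b)=\left(\begin{smallmatrix}2&-b\\-a&2\end{smallmatrix}\right)$ yields $H(a\epsilon_d,b\epsilon_d)$ rather than $H(b\epsilon_d,a\epsilon_d)$, a labelling point you already flag.
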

\begin{table}
\begin{tabular}{l||l|l|l|l}
Type & Integer conditions & Simple roots & Cartan Matrix & Inner product matrix\\\hline
$\text{I}_L$&$r$ arbitrary & $\a^{LL}_r$ & $A_1$ & $\frac{a}{b}A_1$\\
$\text{I}_S$&$r$ arbitrary & $\a^{SU}_r$ & $A_1$ & $A_1$\\
$\text{II}_L$ & $d>0$, $0\le r<d$ & $\a^{LL}_r,\a^{LU}_{d-r-1}$& $H(\delta_d,\delta_d)$& $\frac{a}{b} H(\delta_d,\delta_d)$\\
$\text{II}_S$&$d>0$, $0\le r<d$ & $\a^{SU}_r,\a^{SL}_{d-r-1}$& $H(\delta_d,\delta_d)$& $H(\delta_d,\delta_d)$\\
$\text{II}_{LS}$&$d\ge 0$, $-d\le r\le d$ & $\a^{LL}_r,\a^{SU}_{d-r}$& $H(a\epsilon_d, b\epsilon_d)$ & $B(a\epsilon_d, b\epsilon_d)$
\end{tabular}
\vspace{2mm}
\caption{$\Phi$-subsystems of rank 2 root systems}\label{T-Phisubs}
\end{table}

\begin{proof}
Let $\Phi'$ be a $\Phi$-subsystem of $\Delta$.
First suppose that $\Phi'\subseteq W\a_1$. 
Then Proposition~\ref{Iclass}(i) implies that 
$\Phi'=\{\alpha^{LL}_j,\alpha^{LU}_{-j-1} \mid j\in r+d\mathbb{Z}\},$
for some $d\ge0$ and $0\le r<d$.
If $d=0$, this gives us type $\text{I}_L$.
Otherwise it is easily shown that every positive root in $\Phi'$ is a
positive linear combination of $\a^{LL}_r$ and $\a^{LU}_{d-r-1}$, so these roots forms a basis. The Cartan matrix and inner product matrix can be computed directly from the basis. 

For example, if the Cartan matrix is $(c_{ij})$ then
\begin{align*}
c_{12} &= \frac{2(\a^{LL}_r,\a^{LU}_{d-r-1})}{(\alpha_r^{LL},\alpha^{LL}_r)}
  = \frac{b}{a} (\a^{LL}_r,\a^{LU}_{d-r-1})
  =\frac{b}{a} \left((w_1w_2)^r\a_1,(w_1w_2)^r\a^{LU}_{d-1}\right)
  =\frac{b}{a} (\a_1,\a^{LU}_{d-1})\\
  &= \frac{b}{a}\left(\begin{matrix}1&0\end{matrix}\right) \left(\begin{matrix}2a/b&-a\\-a&2\end{matrix}\right) 
  \left(\begin{matrix}\eta_{d-1}\\a\gamma _{d}\end{matrix}\right)
   = \frac{b}{a}\left(2\frac{a}{b}\eta_{d-1}-a^2\gamma_d\right)
 = 2\eta_{d-1}-ab\gamma_d
  = \eta_{d-1}-\eta_d =-\delta_d,
\end{align*} 
where the second last equality follows from Lemma~\ref{acp}(ii).
This gives type $\text{II}_L$.

Similarly we get types $\text{I}_S$ and $\text{II}_S$ from  Proposition~\ref{Iclass}(ii), and type $\text{II}_{LS}$ from Proposition~\ref{Iclass}(iii).
\end{proof}

As a corollary, we obtain the following.

\begin{thm}\label{infsubs} 
If $\Delta$ is a rank 2 hyperbolic root system, then $\Delta$ contains symmetric rank 2 hyperbolic root subsystems of type $H(k,k)$ for infinitely many distinct $k\geq 3$. 
If $\Delta$ is non-symmetric of type $H(a,b)$, then it
also contains non-symmetric rank 2 hyperbolic root subsystems of type $H(a\ell,b\ell)$ for infinitely many distinct $\ell\geq 2$.
\end{thm}

\begin{proof} The proof follows from the results in Table~\ref{T-Phisubs}. The first claim corresponds to cases $\text{II}_L$ and $\text{II}_S$. The second claim corresponds to case $\text{II}_{LS}$.
\end{proof}

Values of $\delta_d$ and $\epsilon_d$ for small $d$ are given in Table~\ref{T-subparams}.
\begin{table}

{\tiny
$$\begin{array}{r|rr}
d & \delta_d=\eta_d-\eta_{d-1} & \epsilon_d=\gamma_{d+1}-\gamma_{d} \\\hline
0 & & 1\\
1 & ab - 2 & ab - 3 \\
2 & a^2b^2 - 4ab + 2 & a^2b^2 - 5ab + 5 \\
3 & a^3b^3 - 6a^2b^2 + 9ab - 2 & a^3b^3 - 7a^2b^2 + 14ab - 7 \\
4 & a^4b^4 - 8a^3b^3 + 20a^2b^2 - 16ab + 2 & a^4b^4 - 9a^3b^3 +
27a^2b^2 - 30ab + 9 \\
5 & a^5b^5 - 10a^4b^4 + 35a^3b^3 - 50a^2b^2 + 25ab - 2 & a^5b^5 -
11a^4b^4 + 44a^3b^3 - 77a^2b^2 + 55ab - 11 \\
6 & a^6b^6 - 12a^5b^5 + 54a^4b^4 - 112a^3b^3 + 105a^2b^2 - 36ab + 2 &
a^6b^6 - 13a^5b^5 + 65a^4b^4 - 156a^3b^3 + 182a^2b^2 - 91ab + 13
\end{array}$$}
\caption{Values of $\delta_d$ and $\epsilon_d$ for small $d$}\label{T-subparams}
\end{table}

We can now prove Theorems~\ref{infshortrts} and~\ref{infbytype} for $\Phi$-subsystems. The latter is similar to results in \cite{CKMS} and \cite{Sr}.

\begin{thm}
Let $\Delta$ be a rank 2 root system and let $\Gamma$ be a nonempty set of real roots in $\Delta$.
\begin{enumerate}
\item If $\Delta$ is finite, then $\Phi(\Gamma)$ is finite.
\item If $\Delta$ is affine of type $\widetilde{A}_1$, then $\Phi(\Gamma)$ has finite type $A_1$ or affine type $\widetilde{A}_1$.
\item If $\Delta$ is affine of type $\widetilde{A}_2^{(2)}$, then $\Phi(\Gamma)$ has finite type $A_1$, or affine type $\widetilde{A}_1$ or $\widetilde{A}_2^{(2)}$.
\item If $\Delta$ is hyperbolic, then $\Phi(\Gamma)$ has finite type $A_1$ or hyperbolic type.
\end{enumerate}
\end{thm}

\begin{proof}
Part (i) is clear. The finite type $A_1$ occurs exactly when $\Gamma\subseteq\{\pm\alpha\}$, so we will assume from now on that this is not the case.

If $\Delta$ is affine, then $ab=4$ and it is easy to show from the recursion formulas in Lemma~\ref{acp} that
$\delta_d=\eta_d-\eta_{d-1}=2$ and $\epsilon_d=\gamma_{d+1}-\gamma_d=1$.
Parts (ii) and (iii) now follow.

If $\Delta$ hyperbolic, then $ab>4$, and so for $d>1$
$$\delta_d=\eta_d-\eta_{d-1} = (ab-2)\eta_{d-1} -\eta_{d-2}-\eta_{d-1} >2\eta_{d-1} -\eta_{d-2}-\eta_{d-1} =\eta_{d-1}-\eta_{d-2}=\delta_{d-1}.$$
By induction we get $\delta_d \ge \delta_1=(ab-1)-1=ab-2>2$ for all $d>0$.
It now follows that $H(\delta_d,,\delta_d)$ is hyperbolic since ${\delta_d}^2>4$.

A similar argument shows that
$\epsilon_d>\epsilon_{d-1}$ for $d>0$,
and so $\epsilon_d\ge\epsilon_0=1$ for $d\ge0$.
and so $H(a\epsilon_d,b\epsilon_d)$ is hyperbolic.
\end{proof}

\subsection{Classification of $\Delta$-subsystems in rank 2}
We now consider the classification of $\Delta$-subsystems of $\Delta$.
Let $\Gamma\subseteq \Delta^\re$ nonempty and recall that $\Delta(\Gamma)=\mathbb{Z}\Gamma\cap\Delta$,
 $\Delta^\re(\Gamma)=\mathbb{Z}\Gamma\cap\Delta^\re$.
Since the imaginary roots of an affine or hyperbolic root system are just the linear combinations of real roots
with nonpositive norm, it will suffice to describe $\Delta^\re(\Gamma)$.
From the definition of a reflection, we can see that $w_\a\realroots(\Gamma)\subseteq \realroots(\Gamma)$ for all $\a\in\Gamma$, and so
$$\Phi(\Gamma)\subseteq\realroots(\Gamma).$$

We also have $\Phi(\realroots(\Gamma))=\realroots(\Gamma)$, so the real roots of a $\Delta$-subsystem always form a $\Phi$-subsystem, but possibly for a different set of generators.

The classification of $\Delta$ subsystems reduces to divisibility properties for the sequences $\eta_j$ and $\gamma_j$. 
This lemma requires a complex but elementary induction argument:
\begin{lemma}\label{divrec}  (\cite{CKMS},\cite{Sr}) Let $a\ge b\ge1$ with $ab\ge4$, and let $d\ge0$, $i\in\mathbb{Z}$. Then
\begin{align*}
\gamma_d\delta_{j-d}&=\gamma_j-\gamma_{j-2d},\\
\eta_d\epsilon_{j-d-1}&=\gamma_j-\gamma_{j-2d-1},\\
\eta_d\delta_{j-d}&=\eta_j-\eta_{j-2d-1},\\ 
ab\gamma_d\epsilon_{j-d}&=\eta_j-\eta_{j-2d}.
\end{align*}
\end{lemma}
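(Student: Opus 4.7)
The plan is to pass to closed-form expressions. By Lemma~\ref{acp}(iv), both $\gamma_j$ and $\eta_j$ satisfy the linear recurrence $X_j = (ab-2)X_{j-1}-X_{j-2}$, whose characteristic roots are the parameters $\psi_\pm$ already introduced in Section~\ref{rootsums}; these satisfy $\psi_++\psi_-=ab-2$ and $\psi_+\psi_-=1$. Matching the initial conditions of Lemma~\ref{acp}(i) gives
\[
\gamma_j=\frac{\psi_+^j-\psi_-^j}{\psi_+-\psi_-}, \qquad \eta_j=\tfrac12\bigl(\psi_+^j+\psi_-^j\bigr)+\tfrac{ab}{2}\gamma_j,
\]
valid for all $j\in\mathbb{Z}$ and consistent with the sign conventions of Lemma~\ref{L-neg}.

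First I would identify the auxiliary sequences $\delta_d$ and $\epsilon_d$ in the same language. A quick check of two initial values (using $\eta_{-1}=-1$), combined with the fact that both $\delta_d$ and $ab\,\epsilon_d$ satisfy the same recurrence $X_d=(ab-2)X_{d-1}-X_{d-2}$, yields
\[
\delta_d=\psi_+^d+\psi_-^d \qquad\text{and}\qquad ab\,\epsilon_d=\delta_d+\delta_{d+1}.
\]

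With these identifications, each of the four identities reduces to a product-of-geometric-progressions calculation. The key simplification is $\psi_\pm^{-d}=\psi_\mp^d$, a consequence of $\psi_+\psi_-=1$, which forces the cross-terms on the two sides to match. For identity~(1), both
\[
\frac{(\psi_+^d-\psi_-^d)(\psi_+^{j-d}+\psi_-^{j-d})}{\psi_+-\psi_-}\quad\text{and}\quad\frac{(\psi_+^j-\psi_-^j)-(\psi_+^{j-2d}-\psi_-^{j-2d})}{\psi_+-\psi_-}
\]
collapse to $\bigl[\psi_+^j(1-\psi_-^{2d})-\psi_-^j(1-\psi_+^{2d})\bigr]/(\psi_+-\psi_-)$. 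Identities (2)--(4) are handled by analogous expansions; (3) and (4) additionally use $ab\,\epsilon_d=\delta_d+\delta_{d+1}$ to convert between the $\gamma$-side and the $\eta$-side of the identity.

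An alternative route entirely avoids closed forms: fix $j$ and induct on $d$. Since each of $\gamma$, $\eta$, $\delta$, $\epsilon$ satisfies $X_d=(ab-2)X_{d-1}-X_{d-2}$, the identity at level $d$ equals $(ab-2)\cdot(\text{identity at }d-1)-(\text{identity at }d-2)$ term by term, so it suffices to verify the base cases $d=0,1$ by direct substitution. The main obstacle in either approach is the bookkeeping: the four identities are tightly intertwined by different index shifts ($j-2d$ versus $j-2d-1$) and swap the roles of $\gamma$ and $\eta$, so one must keep the cases aligned; no individual step is hard.
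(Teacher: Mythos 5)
Your primary (closed-form) argument is sound and genuinely different from the paper's. The paper proves all four identities by a simultaneous induction on $d$: it first records the coupled first-order relations $\delta_{j-1}=ab\epsilon_{j-1}-\delta_j$ and $\epsilon_{j-1}=\delta_j-\epsilon_j$ (alongside $\gamma_{d+1}=\eta_d-\gamma_d$ and $\eta_d=ab\gamma_d-\eta_{d-1}$ from Lemma~\ref{acp}), and then expands, e.g., $\gamma_{e+1}\delta_{j-e-1}=(\eta_e-\gamma_e)(ab\epsilon_{j-e-1}-\delta_{j-e})$, so that the level-$(e+1)$ instance of one identity becomes an integer linear combination of level-$e$ instances of the others. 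Your Binet-type formulas replace that bookkeeping with the single observation $\psi_+\psi_-=1$, and your identifications $\delta_d=\psi_+^d+\psi_-^d$ and $ab\,\epsilon_d=\delta_d+\delta_{d+1}$ check out against the initial values and the recurrence. Two caveats, one minor and one not.

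Minor: at $ab=4$, which the lemma covers and which is needed downstream for Lemma~\ref{div}\ref{div-ge41} and Theorem~\ref{subsysbeq1}(ii), you have $\psi_+=\psi_-=1$ and your formula for $\gamma_j$ is $0/0$. This is easily repaired --- for fixed $d,j$ both sides of each identity are polynomials in $ab$, so validity for all $ab>4$ forces validity at $ab=4$ --- but you should say so.

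Not minor: the ``alternative route'' fails as stated. With $j$ fixed, the two sides of (say) the first identity, viewed as sequences in $d$, do \emph{not} satisfy $X_d=(ab-2)X_{d-1}-X_{d-2}$: a product of two solutions of a second-order linear recurrence is not generally a solution of that recurrence. Concretely, for $ab=5$, $j=4$ one gets $\gamma_0\delta_4=0$, $\gamma_1\delta_3=18$, $\gamma_2\delta_2=21\neq 3\cdot 18-0$. Your own closed forms show what goes wrong: as functions of $d$ both sides are spanned by $1$, $\psi_+^{2d}$, $\psi_-^{2d}$, so they satisfy a common \emph{third}-order recurrence with characteristic roots $1,\psi_+^{2},\psi_-^{2}$. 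The induction can be salvaged, but only with that third-order recurrence and the three base cases $d=0,1,2$ --- or by running the paper's coupled induction on all four identities simultaneously, which is precisely why the paper does it that way.
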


\begin{lemma} \label{div} Let $a\ge b\ge1$ with $ab\ge4$, and let $d\ge0$, $j\in\mathbb{Z}$.
\begin{enumerate}
\item\label{div-ab} $\gcd(a,\eta_j)=\gcd(b,\eta_j)=1$. 
\item\label{div-gg} $\gamma_d \mid \gamma_j$ if and only if $j\in d\mathbb{Z}$.
\item\label{div-eg} $\eta_d\mid \gamma_j$ if and only if $j\in(2d+1)\mathbb{Z}$. 
\item\label{div-ee} $\eta_d\mid\eta_j$ if and only if $j\in d+(2d+1)\mathbb{Z}$.
\item\label{div-ge} $\gamma_d\mid\eta_j$ if and only if $d=1$, when $ab>4$.
\item\label{div-ge41} $\gamma_d\mid\eta_j$ if and only if $d=2e+1$ is odd and $j\in e+(2e+1)\mathbb{Z}$, when $ab=4$.
\end{enumerate}
\end{lemma}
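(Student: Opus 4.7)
The approach is to derive all six parts from the four divisibility identities of Lemma~\ref{divrec}, combined with the negation formulas of Lemma~\ref{L-neg} and the monotonicity of $\gamma_j$ and $\eta_j$ on $j\ge 0$. Part~(i) is immediate: reducing $\eta_j=ab\gamma_j-\eta_{j-1}$ modulo $a$ (resp.\ $b$) gives $\eta_j\equiv -\eta_{j-1}\pmod{a}$, so inductively $\eta_j\equiv(-1)^j\pmod{a}$, hence $\gcd(a,\eta_j)=\gcd(b,\eta_j)=1$. For (ii)--(v), Lemma~\ref{divrec} yields the four periodicities
\begin{align*}
\gamma_j&\equiv\gamma_{j-2d}\pmod{\gamma_d}, & \gamma_j&\equiv\gamma_{j-2d-1}\pmod{\eta_d},\\
\eta_j&\equiv\eta_{j-2d-1}\pmod{\eta_d}, & \eta_j&\equiv\eta_{j-2d}\pmod{\gamma_d},
\end{align*}
so in each case it suffices to determine which $j$ in a single period produce a zero residue. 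The negation formulas $\gamma_{-k}=-\gamma_k$ and $\eta_{-k}=-\eta_{k-1}$ fold each period in half, reducing the inspection to $j\in\{0,1,\ldots,d\}$. On this range both sequences are strictly increasing positive integers for $j\ge 1$, with $\gamma_j\le\gamma_d<\eta_d=\gamma_d+\gamma_{d+1}$ and $\eta_j\le\eta_d$. Parts (ii), (iii), (iv) follow: the only zero residues within the folded period are $j\in\{0,d\}$, $j=0$, and $j=d$ respectively, matching the claimed index sets.

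Part (v) is the main obstacle, because for $d\ge 2$ the values $\eta_j$ can exceed $\gamma_d$. Here I would fold to $j\in\{0,\ldots,d-1\}$ via $\eta_{j-2d}=-\eta_{2d-j-1}$ and prove $\eta_j\not\equiv 0\pmod{\gamma_d}$ directly. The recurrence gives $\gamma_d=(ab-2)\gamma_{d-1}-\gamma_{d-2}\ge (ab-3)\gamma_{d-1}\ge 2\gamma_{d-1}$ whenever $ab\ge 5$, so $\eta_j=\gamma_j+\gamma_{j+1}\le\gamma_{d-1}+\gamma_d<2\gamma_d$ throughout the range. The delicate remaining task is to exclude the single possible collision $\eta_j=\gamma_d$: for $j=d-1$ one has $\eta_{d-1}-\gamma_d=\gamma_{d-1}>0$, and for $j\le d-2$ the same recurrence estimate strengthens to $\eta_j\le\gamma_{d-2}+\gamma_{d-1}<\gamma_d$ outright, so no collision occurs. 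The cases $d\in\{0,1\}$ are trivial.

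For (vi), the affine case $ab=4$, the recurrence collapses to $X_j=2X_{j-1}-X_{j-2}$, and with the given initial data one checks directly that $\gamma_j=j$ and $\eta_j=2j+1$ for all $j\in\mathbb{Z}$ (the formulas for $j<0$ follow from Lemma~\ref{L-neg}). Then $\gamma_d\mid\eta_j$ iff $d\mid 2j+1$, which forces $d$ to be odd; writing $d=2e+1$, this condition becomes $2(j-e)\equiv 0\pmod{2e+1}$, equivalent to $j\equiv e\pmod{2e+1}$ since $\gcd(2,2e+1)=1$, giving the stated index set.
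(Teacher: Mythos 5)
Your proof is correct and follows the same strategy the paper sketches: part (i) via $\eta_j\equiv(-1)^j\pmod{ab}$, parts (ii)--(v) by using the identities of Lemma~\ref{divrec} to reduce modulo $\gamma_d$ or $\eta_d$ to a single (folded) period and then appealing to monotonicity and the relations of Lemma~\ref{acp} to locate the zero residues, and part (vi) from the explicit formulas $\gamma_j=j$, $\eta_j=2j+1$ in the affine case. The paper's own proof is only a two-line sketch, and your write-up supplies the missing details (in particular the size estimates needed for part (v)) consistently with that outline.
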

\begin{proof}
\ref{div-ab} This follows from the fact that $\eta_j\equiv(-1)^j\pmod{ab}$, which is easily proved by induction.

Cases (ii)-(v) proceed by repeated application of Lemma~\ref{divrec}. Part (v) also uses Lemma~\ref{acp}. Part (vi) is immediate.
\end{proof}

\newpage

\begin{thm}
\label{subsysbeq1}
Let $\Delta$ be a rank 2 root system of type $H(a,b)$ with $a\ge b$ and $ab\ge4$.
Let $\Gamma\subseteq\Delta^\re$ be nonempty. 
\begin{enumerate}
\item If $a>4$, $b=1$ and $\Phi(\Gamma)$ is the subsystem consisting of all short roots in $\Delta^\re$, then $\Delta^\re(\Gamma)=\Delta^\re\ne\Phi(\Gamma)$.

\item If $a=4$, $b=1$ and $\Phi(\Gamma)$ is a subsystem of type $\text{II}_S$ with basis $\a^{SU}_r,\a^{SL}_{d-r-1}$ for some odd $d=2e+1$ and $0\le r<d$,
then $\Delta^\re(\Gamma)\ne\Phi(\Gamma)$ is a subsystem of type $\text{II}_{LS}$ with basis $\a^{LL}_{s},\a^{SU}_{e-s}$ where $s\equiv e-r\pmod{d}$ and $-e\le s\le e$.  
 
\item In all other cases,  $\Delta^\re(\Gamma)=\Phi(\Gamma)$.
\end{enumerate}
\end{thm}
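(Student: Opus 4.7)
The plan is to reduce to a case-by-case analysis over the five types of $\Phi$-subsystem classified in Table~\ref{T-Phisubs}, and then treat each type with an explicit lattice computation.

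The first step is to show that $\Delta^\re(\Gamma)$ is itself a $\Phi$-subsystem. The inclusion $\Phi(\Gamma)\subseteq\Delta^\re(\Gamma)$ is immediate. The key point is that for any $\beta\in\Delta^\re(\Gamma)$, the reflection $w_\beta$ preserves both $\Delta^\re$ and $\mathbb{Z}\Gamma$: indeed, $w_\beta(\alpha)=\alpha-\langle\alpha,\beta^\vee\rangle\beta$ stays in $\mathbb{Z}\{\alpha,\beta\}\subseteq\mathbb{Z}\Gamma$ for every $\alpha\in\Gamma$. Hence $\Delta^\re(\Gamma)=\Phi(\Delta^\re(\Gamma))$ appears in Table~\ref{T-Phisubs}, and since $\mathbb{Z}\Gamma=\mathbb{Z}\Phi(\Gamma)$, I will replace $\Gamma$ by the basis of $\Phi(\Gamma)$ given there and go through the five rows.

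Types $\text{I}_L$ and $\text{I}_S$ are trivial (one-dimensional lattice). For types $\text{II}_L$ and $\text{II}_{LS}$, Lemma~\ref{acp} shows that every element of $L=\mathbb{Z}\Gamma$ has $\alpha_2$-coefficient divisible by $a\ge2$, so Lemma~\ref{div}\ref{div-ab} ($\gcd(a,\eta_k)=1$) forbids short roots in $L$; a direct index comparison against Proposition~\ref{Iclass}(i) and~(iii) then shows no extra long roots appear, giving case (iii). Type $\text{II}_S$ with $b\ge 2$ is handled by the symmetric argument using $\gcd(b,\eta_k)=1$.

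The substantive case is type $\text{II}_S$ with $b=1$ and basis $\{\alpha^{SU}_r,\alpha^{SL}_{d-r-1}\}$. Here $L$ is generated in $\mathbb{Z}\alpha_1\oplus\mathbb{Z}\alpha_2$ by $(\gamma_r,\eta_r)$ and $(\gamma_{d-r},\eta_{d-r-1})$; a short induction on $r$ using Lemma~\ref{acp} will identify its determinant as $\gamma_d$. A long root $\alpha^{LL}_k=\eta_k\alpha_1+a\gamma_k\alpha_2$ then lies in $L$ iff $\gamma_d$ divides both $\eta_k\eta_{d-r-1}-a\gamma_k\gamma_{d-r}$ and $a\gamma_k\gamma_r-\eta_k\eta_r$, with an analogous condition for $\alpha^{LU}_k$. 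For $ab>4$, Lemma~\ref{div}\ref{div-ge} rules these divisibilities out when $d\ge2$, and when $d=1$ we necessarily have $r=0$, so $L=\mathbb{Z}\alpha_1\oplus\mathbb{Z}\alpha_2\supseteq\Delta^\re$; this is case (i). For $ab=4$, Lemma~\ref{div}\ref{div-ge41} permits the divisibilities exactly when $d=2e+1$ is odd, and solving the congruences (using the identities of Lemma~\ref{divrec} to rewrite the Cramer expressions modulo $\gamma_d$) will pin down the arithmetic progression of $k$'s for which $\alpha^{LL}_k\in L$; matching this against the $\text{II}_{LS}$ parametrization from Proposition~\ref{Iclass}(iii) will recover the basis $\alpha^{LL}_s,\alpha^{SU}_{e-s}$ with $s\equiv e-r\pmod{d}$ and $-e\le s\le e$, giving case (ii). The main obstacle I anticipate is this last step: reconciling the Cramer divisibilities with the explicit $\text{II}_{LS}$ parametrization to extract the exact congruence $s\equiv e-r\pmod d$, which requires patient bookkeeping with the sequences $\eta_j$ and $\gamma_j$ and the auxiliary identities from Lemma~\ref{divrec}.
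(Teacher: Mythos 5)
Your overall architecture matches the paper's: reduce to the five rows of Table~\ref{T-Phisubs} (after observing, as the paper does just before the theorem, that $\Delta^\re(\Gamma)$ is itself a $\Phi$-subsystem containing $\Phi(\Gamma)$), and decide membership of each root in the lattice $L=\mathbb{Z}\Gamma$ by divisibility, invoking Lemma~\ref{div}. However, you are missing the one idea that makes the computation tractable, and as a consequence the decisive steps of your argument are not actually carried out. The paper first conjugates by a Weyl group element --- $\Phi(\Gamma)=(w_1w_2)^r\Phi(\{\a^{LL}_0,\a^{LU}_{d-1}\})$ for type $\text{II}_L$, and similarly for $\text{II}_S$ and $\text{II}_{LS}$ --- to reduce to $r=0$ (resp.\ $r=d$), where one generator is a simple root and the lattice becomes diagonal: $\mathbb{Z}\{\alpha_1,a\gamma_d\alpha_2\}$, $\mathbb{Z}\{b\gamma_d\alpha_1,\alpha_2\}$, or $\mathbb{Z}\{\eta_d\alpha_1,\alpha_2\}$. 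Each membership test is then a \emph{single} divisibility of the form $\gamma_d\mid\gamma_j$, $\eta_d\mid\eta_j$, $\gamma_d\mid\eta_j$, etc., which is exactly what Lemma~\ref{div} answers. Your plan keeps the general basis and replaces each such test by a pair of Cramer-type conditions such as $\gamma_d\mid(\eta_k\eta_{d-r-1}-a\gamma_k\gamma_{d-r})$. Lemma~\ref{divrec} is not set up for these cross products --- its identities are tailored to the diagonal tests --- so the reduction you defer to ``patient bookkeeping'' is a genuine unresolved obstacle, not a routine verification. The same issue already appears in your case (iii): for type $\text{II}_L$ the divisibility-by-$a$ argument kills short roots, but ruling out \emph{extra long roots} (i.e., showing $I^L$ does not shrink to $r'+d'\mathbb{Z}$ with $d'$ a proper divisor of $d$) still needs the two-variable lattice computation, and ``a direct index comparison against Proposition~\ref{Iclass}'' does not supply it.

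Concretely, the fix is to establish and use $W$-equivariance: $\Delta^\re(w\Gamma)=w\,\Delta^\re(\Gamma)$ and $\Phi(w\Gamma)=w\,\Phi(\Gamma)$ for $w\in W$, so it suffices to treat one representative per $W$-orbit of subsystems. With $r=0$ the determinant claim you make (that $[\,Q:L\,]=\gamma_d$ for type $\text{II}_S$, $b=1$) becomes the trivial computation $\det\left(\begin{smallmatrix}0&1\\\gamma_d&\eta_{d-1}\end{smallmatrix}\right)=-\gamma_d$, the membership of $\a^{LL}_k$ reduces to $\gamma_d\mid\eta_k$, and Lemma~\ref{div}\ref{div-ge} and \ref{div-ge41} immediately separate the three conclusions of the theorem, including the congruence $j\in e+(2e+1)\mathbb{Z}$ that, after undoing the conjugation by $(w_2w_1)^r$, yields the stated basis $\a^{LL}_s,\a^{SU}_{e-s}$ with $s\equiv e-r\pmod d$. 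Without this reduction your proposal identifies the right answer but does not prove case (ii), which is the substantive content of the theorem.
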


\begin{proof} 
If $\Phi(\Gamma)$ has type $\text{I}_L$ or $\text{I}_S$, then it is clear that  $\Delta^\re(\Gamma)=\Phi(\Gamma)$.\\

Suppose $\Phi(\Gamma)$ has type $\text{II}_L$. Since $\Phi(\Gamma)=(w_1w_2)^r\Phi(\{\a^{LL}_0,\a^{LU}_{d-1}\})$, it suffices to consider $r=0$.\\

Now $\a^{LL}_0=\a_1$ and $\a^{LU}_{d-1}=\eta_{d-1}\a_1+a\gamma_d\a_2$, so
$$\Delta^\re(\Gamma) =  \mathbb{Z}\{\alpha^{LL}_0,\alpha^{LU}_{d-1}\} \cap \Delta^\re = \mathbb{Z}\{\alpha_1,a\gamma_d\alpha_2\}\cap \Delta^\re.$$

The root $\a^{LL}_j=\eta_{j}\alpha_1+a\gamma_{j}\alpha_2$ is in $\Delta^\re(\Gamma)$ if and only if $\gamma_d\mid\gamma_j$ if and only if $j\in d\mathbb{Z}$ by Lemma~\ref{div}\ref{div-gg}.\\

Also $\a^{SU}_j=b\gamma_j\a_1+\eta_i\a_2$ is in $\Delta^\re(\Gamma)$ if and only if $a\gamma_d\mid \eta_j$ which is not possible by Lemma~\ref{div}\ref{div-ab} since $a>1$.
Hence $\Delta^\re(\Gamma)=\Phi(\Gamma)$.\\

Suppose $\Phi(\Gamma)$ has type $\text{II}_{LS}$. Since $\Phi(\Gamma)=(w_2w_1)^{d-r}\Phi(\{\a^{LL}_d,\a^{SU}_{0}\})$, it suffices to consider $r=d$.

We have $\a^{LL}_{d}=\eta_d\a_1+a\eta_d\a_2$ and $\a^{SU}_0=\a_2$, so
$$\Delta^\re(\Gamma) =  \mathbb{Z}\{\alpha^{LL}_d,\alpha^{SU}_{0}\} \cap \Delta^\re = \mathbb{Z}\{\eta_d\alpha_1,\alpha_2\}\cap \Delta^\re.$$

Now $\a^{LL}_j=\eta_{j}\alpha_1+a\gamma_{j}\alpha_2$ is in $\Delta^\re(\Gamma)$ if and only if $\eta_d\mid\eta_j$  if and only if $j\in d+(2d+1)\mathbb{Z}$ by Lemma~\ref{div}\ref{div-ee}.
And $\a^{SU}_j=b\gamma_j\a_1+\eta_j\a_2$ is in $\Delta^\re(\Gamma)$ if and only if $\eta_d\mid b\gamma_i$ if and only if $j\in (2d+1)\mathbb{Z}$ by Lemma~\ref{div}\ref{div-eg}.
Hence $\Delta^\re(\Gamma)=\Phi(\Gamma)$.

Finally suppose $\Phi(A)$ has type $\text{II}_S$. Since $\Phi(\Gamma)=(w_2w_1)^r\Phi(\{\a^{SU}_0,\a^{SL}_{d-1}\})$, it suffices to consider $r=0$.

Now $\a^{SU}_0=\a_2$ and $\a^{SL}_{d-1}=b\gamma_d\a_1+\eta_{d-1}\a_2$, so
$$\Delta^\re(\Gamma) =  \mathbb{Z}\{\alpha^{LL}_0,\alpha^{LU}_{d-1}\} \cap \Delta^\re = \mathbb{Z}\{b\gamma_d\alpha_1,\alpha_2\}\cap \Delta^\re.$$

We have $\a^{SU}_j=b\gamma_i\a_1+\eta_j\a_2\in \Delta^\re(\Gamma)$ if and only if $\gamma_d\mid \gamma_j$ if and only if $j\in d\mathbb{Z}$.
And $\a^{LL}_j=\eta_{j}\alpha_1+a\gamma_{j}\alpha_2$ is in $\Delta^\re(\Gamma)$ if and only if $b\gamma_d\mid\eta_j$.
By Lemma~\ref{div}\ref{div-ab}, \ref{div-ge}, and \ref{div-ge41},  this can only happen if $a>4$, $b=1$ and $d=1$; or $a=4$, $b=1$ and $d$ odd.\\

If $a>4$, $b=1$, and $d=1$, then $\Phi(\Gamma)$ is the set of all short real roots, and $b\gamma_d\mid\eta_j$ for all $j$ so $\Delta^\re(\Gamma)=\Delta^\re$.\\

If $a=4$, $b=1$, and $d=2e+1$, then $b\gamma_d\mid\eta_i$ if and only if $j\in e+(2e+1)\mathbb{Z}$, so $I^S=(2e+1)\mathbb{Z}$, $I^L=e+(2e+1)\mathbb{Z}$, and hence $\Delta^\re(\Gamma)$ has type $\text{II}_{LS}$ with the given basis. In all other cases  $\Delta^\re(\Gamma)=\Phi(\Gamma)$.
\end{proof}

We may restate Theorem~\ref{subsysbeq1} in the following way.
\begin{thm}\label{restateinfshortrts}
Let $\Delta$ be a  rank 2 infinite root system and let $\Gamma$ be a set of real roots which generate $\Delta$, that is, $\Delta(\Gamma)=\Delta$.
Then either $\Phi(\Gamma)$ is the set of all real roots in $\Delta$ or it is the set of all \emph{short} real roots in $\Delta$.
The second case occurs only if $a=1$ or $b=1$ and $\Phi(\Gamma)$ consists of short roots.
\end{thm}

Theorems~\ref{infshortrts} and~\ref{infbytype} now follow for $\Delta$-subsystems.

\section{Structure constants}

Let $A =(a_{ij})_{i,j\in I}$ for $I=\{1,\dots ,\ell\}$ be any  symmetrizable generalized Cartan matrix.
Let $\mathfrak{g}=\mathfrak{g}(A)$ be the Kac--Moody algebra corresponding to  $A$ with Cartan subalgebra $\mathfrak{h}$.
Let $\langle\cdot,\cdot\rangle: \mathfrak{h}^{\ast}\longrightarrow \mathfrak{h}$ denote the natural nondegenerate bilinear pairing between  $\mathfrak{h}$ and its dual $\mathfrak{h}^{\ast}$.
Let $\Delta=\Delta(A)$ be the corresponding root system with simple roots $\Pi=\{\alpha_1,\dots,\alpha_{\ell}\}\subseteq \mathfrak{h}^{\ast}$ and simple coroots $\Pi^{\vee}=\{\alpha_1^{\vee},\dots,\alpha_{\ell}^{\vee}\} \subseteq
\mathfrak{h}$ such that  
$$\langle\alpha_j,\alpha_i^{\vee}\rangle=\alpha_j(\alpha_i^{\vee})=a_{ij}$$ for $i,j\in I$.

Then $\mathfrak{g}$ has root space decomposition
$$\mathfrak{g}=\mathfrak{g}^+\ \oplus\ \frak{h}\ \oplus\ \mathfrak{g}^-,$$  
$$\mathfrak{g}^+ =\bigoplus_{\alpha\in\Delta^+}\mathfrak{g}_{\alpha},\qquad
\mathfrak{g}^- = \bigoplus_{\alpha\in\Delta^-}\mathfrak{g}_{\alpha}.$$ 
The Chevalley involution $\omega$ is an automorphism of $\mathfrak{g}$ with $\omega^2=1$ and $\omega(\mathfrak{g}_\a)=\mathfrak{g}_{-\a}$ for all $\a\in\Delta$.

In a future paper we will construct Chevalley bases for these algebras, but in the following we restrict our attention to the real root spaces.
Recall that if $\alpha\in\Delta^{\re}$, then the root space $\mathfrak{g}_{\alpha}$ is one dimensional. 
For each $\a\in\Delta^{\re}_+$ we  a choose root vector $x_{\a}\in\mathfrak{g}_{\alpha}$ and  $x_{-\a}:=-\omega(x_\a)$, normalized
so that $[x_{\a},x_{-\a}] = \a^{\vee}$.
In the remaining sections,  our choices  of root vectors $\{x_{\a}\mid\a\in\Delta^\re\}$ are assumed to satisfy
\begin{align}
\omega(x_\a)&=-x_{-\a}&&\text{for $\a\in\Delta^{\re}$},\label{rel-Chev}\\
[x_{\a},x_{-\a}] &= \a^{\vee}  &&\text{for $\a\in\Delta^{\re}$},\label{rel-neg}\\
[x_{\a},x_{\b}]&= 0 &&\text{for $\a,\b\in\Delta^{\re}$ with  $\a+\b\notin\Delta\cup\{0\}$},\label{rel-0}\\
[x_{\a},x_{\b}]&= n_{\a,\b} \ x_{\a+\b} &&\text{for $\a,\b\in\Delta^{\re}$ with $\a+\b\in\Delta^{\re}$ and  $x_{\a+\b}\in \mathfrak{g}_{\alpha+\beta}$}.
\label{rel-triple}
\end{align}

Given roots $\a,\b\in\Delta$, the   $\a$--root string though $\b$  is:
$$\b-p_{\a,\b}\a,\dots,\b-\a,\b, \b+\a,\dots,\b+q_{\a,\b}\a.$$
The following result  is proven in analogy with the finite dimensional case (noted  also in \cite{Mor}). For rank 2 symmetrizable Kac--Moody algebras,   this was proven in detail in \cite{CCFMMTZ}, following (\cite{Hu1}, Ch 25).
\begin{lemma}\label{L-nab} 
Let $\omega$ be the Chevalley involution and suppose that $\omega(x_{\a}) = -x_{-\a}$. Then
$$n_{\a,\b}=\pm(p_{\a,\b}+1).$$
for all $\a,\b,\a+\b\in\Delta^\re$.
\end{lemma}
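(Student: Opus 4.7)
The plan is to follow the classical argument from the finite-dimensional semisimple case (Humphreys, \S 25; see also \cite{Mor} for the rank 2 Kac--Moody setting). The essential ingredients are the finiteness of real-root strings and the integrability of the $\mathfrak{sl}_2(\a)$-action on $\mathfrak{g}$, both of which hold in our setting because $\a\in\Delta^\re$.

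First, I apply the Chevalley involution $\omega$ to \eqref{rel-triple}. Since $\omega$ is a Lie algebra automorphism and $\omega(x_\gamma)=-x_{-\gamma}$ for $\gamma\in\Delta^\re$ by \eqref{rel-Chev}, comparing both sides of $\omega([x_\a,x_\b])=\omega(n_{\a,\b}x_{\a+\b})$ yields
\[
n_{-\a,-\b}=-n_{\a,\b}.
\]

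Second, I analyze the $\a$-string through $\b$. Since $\a$ is real, the string is a finite interval $\{\b+k\a:-p\le k\le q\}$ with $p=p_{\a,\b}$, $q=q_{\a,\b}$, and $\langle\b+k\a,\a^\vee\rangle=p-q+2k$. The Jacobi identity gives
\[
[x_{-\a},[x_\a,x_{\b+k\a}]]-[x_\a,[x_{-\a},x_{\b+k\a}]] = -(p-q+2k)\,x_{\b+k\a},
\]
which translates into the recursion $a_k-a_{k-1}=-(p-q+2k)$ for $a_k:=n_{\a,\b+k\a}\,n_{-\a,\b+(k+1)\a}$. Telescoping with the boundary value $a_q=0$ (since $\b+(q+1)\a\notin\Delta$) produces $a_k=(q-k)(p+k+1)$; in particular, setting $k=0$ gives $n_{\a,\b}\,n_{-\a,\b+\a}=q(p+1)$.

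Third, following the classical proof, I apply the Jacobi identity to the triple $(x_{-\a},x_{-\b},x_{\a+\b})$ and reduce each inner double bracket to a coroot via \eqref{rel-neg}. Combined with the $\mathfrak{sl}_2(\a)$-module identity from Step~2, this yields the classical quadratic relation
\[
n_{\a,\b}\,n_{-\a,-\b} = -(p_{\a,\b}+1)^2.
\]
Together with Step~1 this gives $n_{\a,\b}^2=(p_{\a,\b}+1)^2$, whence $n_{\a,\b}=\pm(p_{\a,\b}+1)$.

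The main obstacle is Step~3, since in the Kac--Moody setting the $\a$-string through $\b$ may pass through imaginary roots of multiplicity greater than one---a phenomenon absent in the finite-dimensional case. Nevertheless, the real root spaces $\mathfrak{g}_\a$, $\mathfrak{g}_\b$, and $\mathfrak{g}_{\a+\b}$ remain one-dimensional by hypothesis, so the Jacobi computation still reduces to a single equation for $n_{\a,\b}\,n_{-\a,-\b}$, and Morita~\cite{Mor} has verified this explicitly in the rank 2 symmetrizable case.
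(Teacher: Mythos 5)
Your route is the same one the paper takes (it simply appeals to "the same proof as the finite-dimensional case" plus Morita), so reconstructing the classical argument is the right idea, and Steps 1 and 3 are sound in outline. However, the genuine gap sits in Step 2, not in Step 3 where you locate it. The telescoping quantity $a_k=n_{\a,\b+k\a}\,n_{-\a,\b+(k+1)\a}$ presupposes that every root space $\mathfrak{g}_{\b+k\a}$ with $0\le k\le q$ is one-dimensional; otherwise $x_{\b+k\a}$ and the scalar $n_{\a,\b+k\a}$ are simply undefined and the boundary condition $a_q=0$ cannot be propagated down to $a_0$. This failure occurs in exactly the cases the lemma is needed for: in $H(a,1)$ with $a\ge5$, take $\a=\a_2$, $\b=\a_1$, so $p=0$, $q=a$, and the upper part of the string contains $\a_1+k\a_2$ for $2\le k\le a-2$, which are imaginary roots of multiplicity possibly greater than one. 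Your closing observation that $\mathfrak{g}_\a$, $\mathfrak{g}_\b$, $\mathfrak{g}_{\a+\b}$ are one-dimensional does rescue the Jacobi computation of Step 3 (which only touches those spaces and their negatives), but it does nothing for the recursion of Step 2.

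Two repairs are available. First, replace the telescoping by $\mathfrak{sl}_2$-module theory: any nonzero vector of $\mathfrak{g}_{\b+q\a}$ is a highest-weight vector for $\mathfrak{sl}_2(\a)$ of weight $p+q$; the irreducible submodule it generates meets the one-dimensional space $\mathfrak{g}_\b$ nontrivially and hence contains $x_\b$, so $[x_{-\a},[x_\a,x_\b]]=q(p+1)x_\b$ follows from the weight formula in the irreducible module of highest weight $p+q$ without ever choosing vectors in the intermediate (possibly imaginary, higher-multiplicity) weight spaces. Alternatively, telescope from whichever end of the string is short: for real $\a,\b$ with $\a+\b$ real in these systems one of $p,q$ is always $0$ or $1$ by the root-string computations of Section 3, so the recursion need only pass through real roots. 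You should also state explicitly that Step 3 uses the length identity $q\,|\a+\b|^2=(p+1)|\b|^2$, a finite-type fact that must be reverified for real roots of a Kac--Moody algebra (it does hold here, again by the explicit root strings). Finally, a minor point: your Step 1 yields $n_{-\a,-\b}=-n_{\a,\b}$, which is the correct sign for the normalization $\omega(x_\a)=-x_{-\a}$; since only its square enters the final computation, this does not affect the conclusion.
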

We  write 
$$n_{\a,\beta}=s_{\a,\beta}(p_{\a,\beta}+1)$$
for some $s_{\a,\b}\in\{\pm1\}$. We refer to $s_{\a,\b}$ as the {\it sign} of the structure constant $n_{\a,\beta}$.
Note that  $p_{\a,\b}$ is determined by the root strings in the root system, but the possible systems of signs
$\{s_{\a,\b}\mid\a,\b,\a+\b\in\Delta^\re\}$ are harder to determine.

The following result generalizes the well known analogous result in the finite dimensional case, proven in Theorem 4.1.2 of [Ca] (see also Proposition 8.1 of \cite{Cas1}). For symmetrizable Kac--Moody algebras,  this theorem is proven in detail in \cite{CCFMMTZ} and (ii) was noted in \cite{Cas2}.

\begin{theorem}\label{lengths} The structure constants of a symmetrizable Kac--Moody algebra satisfy:
\begin{enumerate}
\item\label{len-swap}$n_{\a,\beta}=-n_{\beta,\a}=-n_{-\a,-\b}$ for all $\a,\b,\a+\b\in\Delta^{\re}$.
\item\label{len-triple} If $\a,\b,\g\in\Delta^{\re}$ and $\a+\beta+\gamma=0$, then 
$$\dfrac{n_{\a,\beta}}{| \gamma |^2}=\dfrac{n_{\beta,\gamma}}{|\a|^2}=\dfrac{n_{\gamma,\a}}{|\b|^2}.$$ 
\item\label{len-quadruple} Let $\a,\b,\g,\d\in\Delta^{\re}$ and $\a+\beta+\gamma+\d=0$ with no two roots antipodal.  If every pairwise sum of $\a,\b,\g,\d$ is a real root, then we have:
$$\dfrac{n_{\a,\beta}n_{\g,\d}}{|\a+\b|^2}+\dfrac{n_{\beta,\gamma}n_{\a,\d}}{|\b+\g|^2}+
\dfrac{n_{\gamma,\a}n_{\b,\d}}{|\g+\a|^2}=0.$$ 
\end{enumerate}
\end{theorem}
\begin{proof} The first equality of (i) follows from skew symmetry. For the second equality of (i), if $\a+\b\in\Delta^{\re}$ then $-\a-\b\in\Delta^{\re}$ and we have
$$n_{-\a,-\beta} x_{-\a-\beta} =[x_{-\a},x_{-\beta} ]=[\omega(x_{\a}),\omega(x_{\beta})]=\omega[x_{\a},x_{\beta}]$$
$$=\omega(n_{\a,\beta} x_{\a+\beta})=-n_{\a,\beta} x_{-\a-\beta}.$$
Hence $n_{-\a,-\beta} =-n_{\a,\beta}.$

For (ii),  since $\a+\beta+\gamma=0$, we have
$$ \a+\beta=-\gamma,\ \beta+\gamma=-\a,\ \a+\gamma=-\beta.$$
By the Jacobi identity
$$[[x_{\a},x_{\b}],x_{\g}]+[[x_{\b},x_{\g}],x_{\a}]+[[x_{\g},x_{\a}],x_{\b}]=0.$$
Thus
$$n_{\a,\beta}[x_{-\g},x_{\g}]+n_{\beta,\gamma}[x_{-\a},x_{\a}]+n_{\gamma,\a}[x_{-\b},x_{\b}]=0$$
So
$$-n_{\a,\beta}\g^\vee-n_{\beta,\gamma}\a^\vee-n_{\gamma,\a}\b^\vee=0.$$
The proof follows using the identifications $\g^\vee=\dfrac{2\g}{(\g,\g)}$, $\a^\vee=\dfrac{2\a}{(\a,\a)}$ and $\b^\vee=\dfrac{2\b}{(\b,\b)}$.

For (iii), let $\a,\b,\g,\d\in\Delta^\re$ with $\a+\beta+\gamma+\d=0$ and suppose that every pairwise sum of $\a,\b,\g,\d$ is a real root.  Then we have
\begin{align*}
    \alpha + \beta & = -\gamma - \delta\\
    \beta + \gamma & = -\alpha - \delta\\
    \gamma + \alpha & = -\beta - \delta
\end{align*}
The proof follows by applying \(\text{ad}x_{\delta}\) to the Jacobi identity to obtain
$$0=[[[x_{\alpha}, x_{\beta}], x_{\gamma}],x_{\delta}] + [[[x_{\beta}, x_{\gamma}], x_{\alpha}],x_{\delta}] + [[[x_{\gamma}, x_{\alpha}], x_{\beta}],x_{\delta}]$$
and using (ii).
\end{proof}

We now generalize Carter's method (\cite{Ca1}) of finding structure constants from signs on extraspecial pairs. We apply this method to rank 2 Kac-Moody algebras in the next two sections.

Consider a subset $E\subseteq \Delta^\re_+\times \Delta^\re_+$ and $(\a,\b)\in (\Delta^\re_+\times \Delta^\re_+)-E$.
We say that the sign on $(\a,\b)$ is \emph{determined} by the signs on $E$ if
$$ s_{\a',\b'}=\bar{s}_{\a',\b'}\text{ for all $(\a',\b')\in E$} \quad\implies\quad s_{\a,\b}=\bar{s}_{\a,\b},$$
where $\{x_\a\mid\a\in\Delta^\re_+\}$ and $\{\bar{x}_\a\mid\a\in\Delta^\re_+\}$  are systems of root vectors satisfying (\ref{rel-Chev})--(\ref{rel-triple}) with corresponding  signs
 $\{s_{\a,\b}\mid \a,\b,\a+\b\in\Delta^\re_+\}$ and  $\{\bar{s}_{\a,\b}\mid \a,\b,\a+\b\in\Delta^\re_+\}$ respectively.
 
Take a well-ordering $<$ on the set $\Delta_+^\re$ that respects addition, that is, $\a<\a+\b$ and $\b<\a+\b$ whenever $\a,\b,\a+\b\in\Delta_+^\re$.
We then define the set of \emph{special pairs}
$$P=\{(\a,\b)\mid \a,\b,\a+\b\in\Delta_+^\re\text{ with } \a<\b\}.$$
Take a special pair $(\a,\b)$ and set $\gamma=-\a-\b$.
By Lemma~\ref{lengths}\ref{len-swap} and~\ref{len-triple}, $s_{\a,\b}$ determines all structure constants involving the triples $\{\a,\b,\g\}$ and $\{-\a,-\b,-\g\}$ as follows:
\begin{align*}
s_{\a,\b}&=s_{-\b,-\a}=s_{-\a,-\g}=s_{\g,\a}=s_{-\g,-\b}=s_{\b,\g},\\
-s_{\a,\b}=s_{\b,\a}&=s_{-\a,-\b}=s_{-\g,-\a}=s_{\a,\g}=s_{-\b,-\g}=s_{\g,\b}.
\end{align*}
The entire system of signs of structure constants is now determined by the signs on the  special pairs $P$, using Algorithm~1 of~\cite{CMT}.
 
The ordering $<$ gives the usual lexicographic ordering on pairs, that is,
$(\a,\b)<(\a',\b')$ whenever $\a<\a'$, or $\a=\a'$ and $\b<\b'$.
Since this is a well-ordering, 
we can define the \emph{extraspecial pairs} by transfinite induction as follows: Suppose $(\a,\b)$ is a special pair and $E$ is the set of all extraspecial pairs less than $(\a,\b)$. Then we define $(\a,\b)$ to be extraspecial if, and only if, the sign on $(\a,\b)$ is \emph{not} determined by the signs on $E$.
It is clear that  the signs of the structure constants on extraspecial pairs determine all structure constant signs involving real root vectors. 

Define a positive root to be \emph{decomposable} if it can be written as a sum of two other positive roots. 
In finite root systems, a positive root is  decomposable if, and only if, it is nonsimple.
Lemma~\ref{lengths}\ref{len-quadruple} ensures that, in the finite-dimensional case, there is exactly one extraspecial pair $(\a,\b)$ with $\a+\b=\gamma$ for each decomposable positive root $\gamma$.

Lemma~\ref{lengths}\ref{len-quadruple} is less useful in the infinite dimensional case, and we need a different approach to show that
this property still holds for extraspecial pairs of rank 2 Kac-Moody algebras.

\section{Structure constants for rank 2 Kac--Moody algebras, except $A^{(2)}_2$}
In this section we completely determine the structure constants involving real root vectors in the rank 2 Kac--Moody algebras $H(a,b)$ for $ab>4$ and $H(2,2)=A_1^{(1)}$. 

For convenience, in this section and the following, we write $x_j^{LU}$ instead of $x_{\a_J^{LU}}$, and so on.

\begin{proposition}\label{P-allsc-triv}
If $\Delta=\Delta(H(a,b))$ with $a,b>1$ and $ab\geq 4$, then there are no triples $\a,\b,\g\in\Delta^\re$ with $\a+\b+\g=0$. 
\end{proposition}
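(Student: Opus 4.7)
The plan is to observe that this proposition is a direct corollary of Proposition~\ref{sumbne1} (the result that for $a \ge b > 1$, no sum of two real roots is a real root).

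First, since the hypothesis $a, b > 1$ and $ab \ge 4$ is symmetric in $a$ and $b$, I may assume without loss of generality that $a \ge b > 1$, so that Proposition~\ref{sumbne1} applies to $\Delta = \Delta(H(a,b))$. Next, suppose for contradiction that there exist $\alpha, \beta, \gamma \in \Delta^{\re}$ with $\alpha + \beta + \gamma = 0$. Rearranging gives $\alpha + \beta = -\gamma$. Since the set of real roots is closed under negation (as $w_\gamma \gamma = -\gamma$ and reflections preserve $\Delta^{\re}$), we have $-\gamma \in \Delta^{\re}$, and hence $\alpha + \beta \in \Delta^{\re}$. This directly contradicts Proposition~\ref{sumbne1}, completing the proof.

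There is no genuine obstacle here — the work was already carried out in Section~3 when establishing the staircase pattern of Lemma~\ref{staircase} and using it to rule out $\beta \pm \alpha_i \in \Delta^{\re}$. The present proposition simply rephrases Proposition~\ref{sumbne1} in the triple-sum formulation that will be convenient for the structure constant analysis to follow (where triples summing to zero correspond to pairs $(\alpha, \beta)$ with $\alpha + \beta$ a real root). The proof is therefore a single application of the earlier result, with only the cosmetic step of converting between the two formulations via negation of real roots.
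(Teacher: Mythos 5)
Your proof is correct and matches the paper's own argument: both deduce the result immediately from Proposition~\ref{sumbne1} by rewriting $\alpha+\beta+\gamma=0$ as $\alpha+\beta=-\gamma\in\Delta^{\re}$. Your extra remarks (the WLOG reduction to $a\ge b$ and closure of $\Delta^{\re}$ under negation) are harmless elaborations of steps the paper leaves implicit.
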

\begin{proof} Suppose that there exist $\a,\b,\g\in\Delta^\re$ with $\a+\b+\g=0$. Then $\a+\b=-\g$ is a real root.

But by Proposition~\ref{sumbne1}, when $a,b>1$ and $ab\geq 4$, no sum of real roots can be a real root. 
\end{proof}
Theorem~\ref{T-sc}\ref{T-sc-triv} now follows.

\begin{proposition}\label{triples}
Let $\Delta=\Delta(H(a,1))$ with $a>4$. Then $\a,\b,\g$ are real roots of $\Delta$ with $\a+\b+\g=0$ if and only if, for some $j\in\Z$,
$$\{\a,\b,\g\}= \{\a^{SU}_j,\a^{SU}_{j+1},-\a^{LU}_j\}\quad\text{or}\quad\{\a^{SL}_j,\a^{SL}_{j+1},-\a^{LL}_{j+1}\}.$$
\end{proposition}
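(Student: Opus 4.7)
\textbf{Plan for the proof of Proposition~\ref{triples}.}
The easy direction is a direct computation. For each $j\in\mathbb{Z}$ one verifies, using Lemma~\ref{acp} with $b=1$, that $\a^{SU}_j+\a^{SU}_{j+1}=\a^{LU}_j$ and $\a^{SL}_j+\a^{SL}_{j+1}=\a^{LL}_{j+1}$; both identities reduce to the pair of recurrences $\eta_j=\gamma_j+\gamma_{j+1}$ and $a\gamma_{j+1}=\eta_j+\eta_{j+1}$, which follow from parts (ii) and (iii) of Lemma~\ref{acp}.

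For the forward direction, suppose $\a,\b,\g\in\Delta^\re$ with $\a+\b+\g=0$. Since a triple of positive (or negative) roots cannot sum to zero, after relabelling we may assume $\a\in\Delta^\re_+$. Write $\a=w\alpha_i$ with $w\in W$ and $i\in\{1,2\}$; applying $w^{-1}$ reduces the problem to enumerating real roots $\b'$ for which $\alpha_i+\b'$ is a nonzero real root. Invoking Theorem~\ref{T-beq1}(i)-(ii) for $i=1$ and (iii)-(iv) for $i=2$, and keeping track of both signs of $\b'$, the only triples $\{\alpha_i,\b',\g'\}$ of real roots summing to zero that appear are, up to permutation,
\[ T_1=\{\alpha_1,\alpha_2,-\alpha_1-\alpha_2\}\quad\text{and}\quad T_2=\{\alpha_2,\alpha_1+(a-1)\alpha_2,-\alpha_1-a\alpha_2\}. \]
Using Lemmas~\ref{acp} and~\ref{L-neg}, $T_1=\{\a^{SU}_{-1},\a^{SU}_0,-\a^{LU}_{-1}\}$ and $T_2=\{\a^{SU}_0,\a^{SU}_1,-\a^{LU}_0\}$, so both base triples belong to the first of the two claimed families.

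To conclude, I would check that the union $\mathcal{F}$ of the two listed families is stable under $W$, so that every Weyl translate of $T_1$ or $T_2$ again lies in $\mathcal{F}$. Since $W=\langle w_1,w_2\rangle$ it suffices to verify closure under the simple reflections, which follows from Lemma~\ref{reflact} combined with the negation identities of Lemma~\ref{L-neg}. For example one computes
\[ w_1\bigl\{\a^{SU}_j,\a^{SU}_{j+1},-\a^{LU}_j\bigr\}=\bigl\{\a^{SL}_j,\a^{SL}_{j+1},-\a^{LL}_{j+1}\bigr\}, \]
moving the first family to the second; three analogous identities (with appropriate shifts in $j$) handle the remaining cases of $w_2$ on the first family and of $w_1,w_2$ on the second.

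The main obstacle is the bookkeeping in this last step: correctly reconciling the index shifts produced by Lemma~\ref{reflact} with the sign flips of Lemma~\ref{L-neg} requires care, but it introduces no new idea beyond the reduction to the simple-root case already delivered by Theorem~\ref{T-beq1}.
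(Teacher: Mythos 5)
Your proof is correct and follows essentially the same route as the paper's: reduce by a Weyl group element to the case where one root is simple, apply Theorem~\ref{T-beq1} to pin down the other two, and translate back. The only cosmetic differences are that the paper first invokes Theorem~\ref{sums} to know two of the three roots must be short (so only the $\alpha_2$ cases of Theorem~\ref{T-beq1} are needed), whereas you treat $\alpha_1$ and $\alpha_2$ directly, and the paper writes down the Weyl translates of the base triple explicitly rather than phrasing the last step as $W$-stability of the claimed family.
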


\begin{proof} We have real roots $\a,\b,\g$ satisfying $\a+\b+\g=0$ if and only if
$\a+\b=-\g.$ 
By Proposition~\ref{sums} two of the three roots must be short so, without loss of generality, assume $\a,\b$ are short roots.
Then there is a $w\in W$ such that $w^{-1}\a=\a_2$. Since $w^{-1}\a+w^{-1}\b=-w^{-1}\g$ is a real root, Theorem~\ref{T-beq1} shows that $w^{-1}\b$ is s $\a_1$ or $\a_1+(a-1)\a_2$. But $\a_1$ is long, so $w^{-1}\b=\a_1+(a-1)\a_2$.
We get the triples $\{\a^{SU}_j,\a^{SU}_{j+1},-\a^{LU}_j\}$ and
$\{\a^{SL}_j,\a^{SL}_{j+1},-\a^{LL}_{j+1}\}$ by taking $w=(w_1w_2)^j$ and $(w_1w_2)^jw_1$, respectively.
\end{proof}

\begin{theorem}
Let $\Delta=\Delta(H(a,1))$ with $a>4$. 
Let $H\in\{U,L\}$ and set $\a=\a^{SH}_j$, $\b=\a^{SH}_{j+1}$, $\g=-(\a+\b)$, then
\begin{alignat*}{3}
 p_{\a,\b}=p_{\b,\a}&=a-1,\qquad & p_{\a,\g}=p_{\g,\a}= p_{\b,\g}=p_{\g,\b}&=0.
 \end{alignat*}
\end{theorem}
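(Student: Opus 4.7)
The plan is to exploit the Weyl-invariance of root strings to reduce the statement to a single canonical triple, then use skew-symmetry of the structure constants to cut the number of $p$-values that need to be computed, and finally read off each remaining $p$ directly from the invariant quadratic form on $\mathbb{Z}\a_1\oplus\mathbb{Z}\a_2$.

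First, since $W$ permutes the roots and preserves every $\a$-string, applying $(w_2w_1)^{-j}$ (when $H=U$) or $w_1(w_1w_2)^{-j}$ (when $H=L$) carries the pair $(\a^{SH}_j,\a^{SH}_{j+1})$ to $(\a_2,\a_1+(a-1)\a_2)$ in both cases, and $\g$ to $-\a_1-a\a_2=-\a^{LU}_0$. Next, Lemma~\ref{lengths}\ref{len-swap} gives $n_{\a,\b}=-n_{\b,\a}$, hence $|n_{\a,\b}|=|n_{\b,\a}|$; since $|n_{\a,\b}|=p_{\a,\b}+1$ by Lemma~\ref{L-nab}, this forces $p_{\a,\b}=p_{\b,\a}$, and similarly $p_{\a,\g}=p_{\g,\a}$ and $p_{\b,\g}=p_{\g,\b}$. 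It therefore suffices to compute the three values $p_{\a,\b}$, $p_{\a,\g}$, $p_{\b,\g}$.

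For these I would use the classification: a nonzero $v=x\a_1+y\a_2$ is a short real root iff $ax^2-axy+y^2=1$, a long real root iff $ax^2-axy+y^2=a$, and an imaginary root iff $ax^2-axy+y^2\le 0$. For the $\a$-string through $\b$, writing $\b+k\a=\a_1+(a-1+k)\a_2$ gives the quadratic $f(k):=k^2+(a-2)k+1$. Direct substitution yields $f(1-a)=f(1)=a$ and $f(2-a)=f(0)=1$, while $f(-1)=f(3-a)=4-a<0$ and $f(-a)=f(2)=2a+1>0$. Since $f$ is convex, $f(k)<0$ for every integer $k$ with $3-a\le k\le -1$, placing imaginary roots at all interior positions; combined with the four real roots at $k\in\{1-a,2-a,0,1\}$ and the fact that $\b-a\a$ and $\b+2\a$ are not roots, this shows that the $\a$-string through $\b$ is unbroken and runs from $k=1-a$ to $k=1$, yielding $p_{\a,\b}=a-1$. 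For the two remaining pairs a single one-step check suffices: one computes $|\g-\a|^2/2=|\g-\b|^2/2=2a+1$, so neither $\g-\a$ nor $\g-\b$ lies in $\Delta$, whence $p_{\a,\g}=p_{\b,\g}=0$.

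The only substantive point is verifying that the $\a$-string through $\b$ contains no internal gaps; this reduces to the convexity bound $f(k)<0$ for $3-a\le k\le-1$, which holds uniformly in $a>4$. Everything else is a short evaluation using Lemma~\ref{acp} and the definition of the form on $\mathbb{Z}\a_1\oplus\mathbb{Z}\a_2$.
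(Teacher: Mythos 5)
Your proof is correct and follows essentially the same route as the paper: conjugate by a Weyl element to the canonical triple $(\a_2,\ \a_1+(a-1)\a_2,\ -\a_1-a\a_2)$ and then read off the root strings, with your quadratic-form computation $f(k)=k^2+(a-2)k+1$ just making explicit the string $\a_1,\a_1+\a_2,\dots,\a_1+a\a_2$ that the paper writes down directly. Your skew-symmetry argument via Lemma~\ref{L-nab} for $p_{\a,\b}=p_{\b,\a}$ (and the analogous equalities) is a welcome addition, since the paper's proof only computes $p_{\a,\b}$, $p_{\a,\g}$, $p_{\b,\g}$ and leaves the transposed quantities implicit.
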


\begin{proof} 
As in the proof of Proposition~\ref{triples}, we can find $w\in W$ such that
$$w^{-1}\a=\a_2,\quad w^{-1}\b=\a_1+(a-1)\a_2,\quad w^{-1}\g=-\a_1-a\a_2.$$

Now $p_{\a,\g}=0$ since $w^{-1}(\g-\a)=-\a_1-(a+1)\a_2$ is not a root
and $p_{\b,\g}=0$ since $w^{-1}(\g-\b)=-a\a_2$ is not a root.

Finally   the $w^{-1}\a$ root string through $w^{-1}\b$ is
$$w^{-1}\b-(a-1)w^{-1}\a= \a_1,\  \a_1+\a_2,\  \a_1+2\a_2,\ \dots,\ w^{-1}\b=\a_1+(a-1)\a_2, \ w^{-1}(\b+\a)=\a_1+a\a_2,$$
and thus $p_{\a,\b}=p_{w^{-1}\a,w^{-1}\b}=a-1$.
\end{proof}

We can now determine all possible systems of signs for these structure constants. We take our ordering on $\Delta^\re_+$ to be
\begin{multline*}
a^{SU}_0<\a^{LL}_0<\a^{SL}_{0}<\a^{SU}_{1}< \a^{SL}_{1}<\a^{LU}_0<\a^{LL}_1<\cdots\\
\cdots < \a^{SU}_{j}<\a^{SL}_j<\a^{SU}_{j+1}<\a^{SL}_{j+1}<\a^{LU}_j<\a^{LL}_{j+1}<\cdots,
\end{multline*}
for $j\ge1$.
Now the set of decomposable roots is
$$\{\a^{SL}_0\}\cup\{\a_j^{LU},\a_{j+1}^{LL}\mid j\ge0\},$$
and the set of special pairs is
$$P= \{(\a^{SU}_0,\a^{LL}_{0})\}\cup\{(\a^{SB}_j,\a^{SB}_{j+1})\mid j\ge0,\; B\in\{U,L\}\},$$
with exactly one special pair corresponding to each decomposable root.
We now show that every special pair is an extraspecial pair:
\begin{theorem}\label{T-Ha1signs}
Let $A=H(a,1)$ with $a>4$ and let $\mathfrak{g}=\mathfrak{g}(A)$. 
Then the extraspecial pairs for $\mathfrak{g}$ are $(\a^{SU}_0,\a^{LL}_{0})$ with sum $\a^{SL}_0$; and, for each integer $j\ge0$,
$(\a^{SU}_j,\a^{SU}_{j+1})$ with sum $\a_j^{LU}$ and $(\a^{SL}_j,\a^{SL}_{j+1})$ with sum $\a_{j+1}^{LL}$.
\end{theorem}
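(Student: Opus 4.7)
The plan is to use Proposition~\ref{triples} to enumerate all positive decompositions of real roots and obtain uniqueness, then verify each special pair is extraspecial via a direct sign-flipping argument.

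First, I would enumerate the positive decomposable roots. By Proposition~\ref{triples}, every triple $\{\a,\b,\g\}$ of real roots with $\a+\b+\g=0$ has the form $\{\a^{SU}_i,\a^{SU}_{i+1},-\a^{LU}_i\}$ or $\{\a^{SL}_i,\a^{SL}_{i+1},-\a^{LL}_{i+1}\}$ for some $i\in\Z$. A decomposition $\g=\a+\b$ with $\a,\b,\g$ all positive corresponds to such a triple (possibly after negation) with exactly one negative entry. Running through all $i$ and applying Lemma~\ref{L-neg} to translate negative-index entries (for instance $\a^{SU}_{-1}=-\a^{SL}_0$), I would show that the positive decomposable roots and their \emph{unique} decompositions are exactly
\begin{align*}
\a^{SL}_0 &= \a^{SU}_0+\a^{LL}_0,\\
\a^{LU}_j &= \a^{SU}_j+\a^{SU}_{j+1} \qquad (j\ge 0),\\
\a^{LL}_{j+1} &= \a^{SL}_j+\a^{SL}_{j+1} \qquad (j\ge 0).
\end{align*}
Together with the given ordering on $\Delta^\re_+$ (under which $\a^{SU}_j<\a^{SU}_{j+1}$, $\a^{SL}_j<\a^{SL}_{j+1}$, and $\a^{SU}_0<\a^{LL}_0$), these decompositions correspond precisely to the special pairs listed in the theorem.

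Next, I would show each special pair $(\a,\b)$ with sum $\g=\a+\b$ is extraspecial by exhibiting two valid root vector systems that agree on every earlier special pair but differ at $(\a,\b)$. Fix any system $\{x_\d\}$ satisfying (\ref{rel-Chev})--(\ref{rel-triple}) and define $\{x'_\d\}$ by $x'_{\pm\g}:=-x_{\pm\g}$ and $x'_\d:=x_\d$ for $\d\ne\pm\g$. A direct check shows this new system still satisfies (\ref{rel-Chev})--(\ref{rel-triple}), and the new structure constants satisfy
\[
n'_{\a',\b'} = \epsilon_{\a'}\epsilon_{\b'}\epsilon_{\a'+\b'}\,n_{\a',\b'}, \qquad \epsilon_\d=\begin{cases}-1 & \text{if }\d\in\{\pm\g\},\\ +1 & \text{otherwise}.\end{cases}
\]
Since $\a',\b',\a'+\b'\in\Delta^\re_+$, the sign $s'_{\a',\b'}$ flips from $s_{\a',\b'}$ iff exactly one of $\a',\b',\a'+\b'$ equals $\g$. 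Three cases arise: (a) $\a'+\b'=\g$, which by uniqueness forces $(\a',\b')=(\a,\b)$; (b) $\a'=\g$, giving $(\g,\b')$ with $\g>\a$, hence a \emph{later} pair in the lexicographic ordering; and (c) $\b'=\g$, giving $(\a',\g)$ with $\a'<\g$.

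For case (c) I would apply Proposition~\ref{triples} once more to enumerate the positive $\a'$ with $\g+\a'\in\Delta^\re$ for each $\g\in\{\a^{SL}_0,\a^{LU}_j,\a^{LL}_{j+1}\}$: when $\g=\a^{LU}_j$ or $\g=\a^{LL}_{j+1}$ with $j\ge 0$ there is no such positive $\a'$, and when $\g=\a^{SL}_0$ the unique such $\a'$ is $\a^{SL}_1$, which satisfies $\a^{SL}_1>\a^{SU}_0=\a$, again giving a later pair. Thus $\{x'_\d\}$ agrees with $\{x_\d\}$ on every special pair strictly less than $(\a,\b)$ while $s'_{\a,\b}=-s_{\a,\b}$, proving $(\a,\b)$ is extraspecial. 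The main obstacle is the bookkeeping in case (c): for each $\g$, one must carefully identify all positive $\a'$ with $\g+\a'\in\Delta^\re$ by translating each triple from Proposition~\ref{triples} via Lemma~\ref{L-neg} into its ``positive form'', since a triple that formally involves $\a^{SU}_i$ or $\a^{SL}_i$ with $i<0$ may, after negation, yield a genuine positive-root sum involving $\g$.
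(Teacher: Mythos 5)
Your proposal is correct, and it proves the statement by the same general mechanism as the paper --- rescaling root vectors by signs to exhibit two admissible systems that agree on all earlier special pairs but disagree at the target pair --- but the execution is genuinely different. The paper flips an infinite ``half-line'' of short root vectors (all $x^{SU}_k$ with $k\le j$ together with their negatives $x^{SL}_k$, $k\ge -j-1$), engineered so that \emph{exactly one} special sign $s^U_j$ changes and every other special sign, earlier or later, is preserved; this stronger conclusion is what lets the paper assert in Theorem~\ref{T-sc}\ref{T-sc-Ha1} that all the signs can be chosen independently. Your flip of only $x_{\pm\g}$ where $\g=\a+\b$ is more local and arguably cleaner: for $\g=\a^{LU}_j$ or $\a^{LL}_{j+1}$ it also changes exactly one sign (since, as you verify from Proposition~\ref{triples}, no positive root sums with a positive long root to give a real root), while for $\g=\a^{SL}_0$ it additionally flips the strictly later pair $(\a^{SL}_0,\a^{SL}_1)$ --- harmless for extraspeciality, since only the extraspecial pairs preceding $(\a,\b)$ matter, and in any case correctable by a subsequent flip of $x_{\pm\a^{LL}_1}$ if one wants the full independence statement. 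Two small points of bookkeeping: the instance $\g=\a^{SL}_0$, $\a'=\a^{SL}_1$ that you file under case (c) actually belongs to case (b), because $\a^{SL}_1>\a^{SL}_0$ so the ordered special pair is $(\g,\a^{SL}_1)=(\a^{SL}_0,\a^{SL}_1)$; and the decisive comparison there is $\a^{SL}_0>\a^{SU}_0=\a$ on the \emph{first} components, not $\a^{SL}_1>\a^{SU}_0$. Neither affects the validity of the argument.
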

\begin{proof}
For all $j\ge0$ define signs for these pairs  as follows:
$$s^{UL}:= s_{\a^{SU}_0,\a^{LL}_{0}},\quad s^U_j:=s_{\a^{SU}_j,\a^{SU}_{j+1}}\quad\text\and\quad s^L_j:=s_{\a^{SL}_j,\a^{SL}_{j+1}}.$$
Suppose $\a=\a^{SU}_j$, $\b=\a^{SU}_{j+1}$, $\g=-\a^{LU}_j=-\a^{SU}_j-\a^{SU}_{j+1}$.
We claim that we can make any choice of signs $s^{UL},s^U_j,s^L_j\in\{+1,-1\}$ for $j\ge0$.
This can be seen by varying the basis we use for $\mathfrak{g}$. 
Take $\a=\a^{SU}_j$, $\b=\a^{SU}_{j+1}$, $\g=-\a^{LU}_j=-\a^{SU}_j-\a^{SU}_{j+1}$ for $j\in\N$.
Suppose we replace root vector $x^{SU}_k$ with $-x^{SU}_k$ for $k\le j$;
replace root vector $x^{SL}_k$ with $-x^{SL}_k$ for $k\ge -j-1$;
and leave all other root vectors unchanged.
Clearly this will not impact properties (\ref{rel-Chev})--(\ref{rel-0}).
Now for $\ell>0$, the Lie algebra relation
$$[x^{SL}_\ell,x^{SL}_{\ell+1}] = s^L_\ell ax^{LU}_\ell$$ 
will be unaffected since $\ell>0\ge -j-1$.
Now consider the relation 
$$[x^{SU}_\ell,x^{SU}_{\ell+1}] = s^U_\ell ax^{LU}_\ell .$$
For $\ell>j$, this relation is also unaffected;
for $0<\ell<j$, two of the three root vectors swap their signs, so the value of $s^U_\ell$ is unchanged;
for $\ell=j$ only one of the three root vectors swaps its sign, so $s^U_\ell$ is replaced by $-s^U_\ell$.
Since all other relations of the form (\ref{rel-triple}) follow from the values of $s^U_\ell,s^L_\ell$ for $\ell>0$, this has the effect of swapping 
a single sign $s^U_j$. A similar argument allows us to swap a single sign $s^L_\ell$,
and we can swap the sign of $s^{UL}$ by swapping the sign of $x^{LL}_0$ and $x^{LU}_{-1}$ without affect any of the other signs.
By transfinite induction, we can now choose these signs arbitrarily.
\end{proof}
Theorem~\ref{T-sc}\ref{T-sc-Ha1} is an immediate corollary of this result.

\section{Structure constants for the twisted affine algebra of type $A_2^{(2)}=H(4,1)$}

In [L], Lepowsky  constructed twisted affine Lie algebras using twisted vertex operators, giving  a  description of structure constants for twisted affine algebras.  In recent work, Calinescu, Lepowsky and Milas (\cite{CLM}) gave an explicit construction of the rank 2 twisted affine algebra $A_2^{(2)}$, including implicit structure constants (up to signs),  using the general method of Lepowsky. 

In this section, we compute constants arising from root strings and construct all possible systems of signs.

\begin{figure}[h]
\begin{center}
{\includegraphics[width=3.6in]{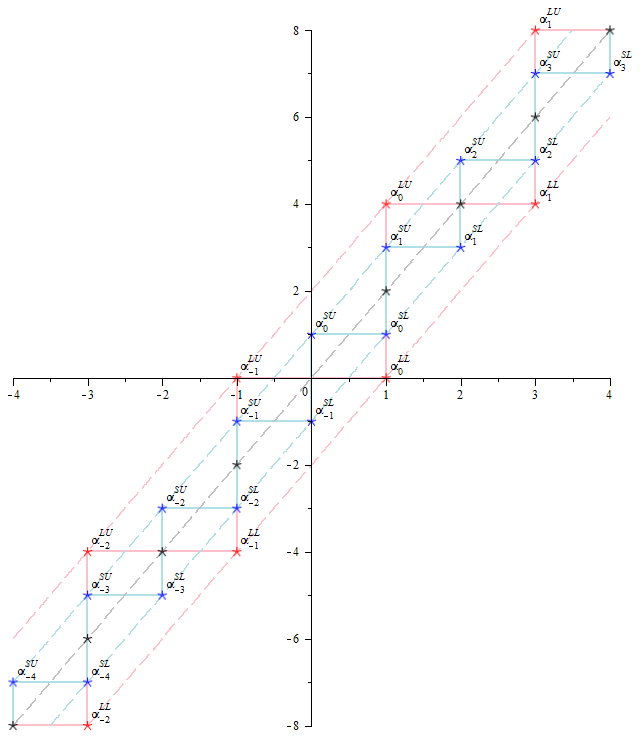}}
\caption{The root system of $A_2^{(2)}=H(4,1)$}\label{Hyp}
\end{center}
\end{figure}

\begin{theorem}
Let $\Delta=\Delta(H(4,1))$. Then $\a,\b,\g$ are real roots of $\Delta$ with $\a+\b+\g=0$ if and only if
$$\{\a,\b,\g\}= \{\a^{SU}_j,\a^{SU}_{2k+1-j},-\a^{LU}_k\}\quad\text{or}\quad\{\a^{SL}_j,\a^{SL}_{2k+1-j},-\a^{LL}_{k+1}\},$$
for some $j,k\in\Z$. Furthermore, if $H\in\{U,L\}$ and we set $\a=\a^{SH}_j$, $\b=\a^{SH}_{2k+1-j}$, $\g=-(\a+\b)$, then
\begin{alignat*}{3}
 p_{\a\b}&=3,\qquad & p_{\a\g}&=0, \qquad& p_{\b\g}&=0,\\
  p_{\b\a}&=3, & p_{\g\a}&=0, & p_{\g\b}&=0.
\end{alignat*}
That is, if  $\a$ and $\b$ are short roots whose sum is long, then $p_{\a\b}=3$ otherwise $p_{\a\b}=0$.
\end{theorem}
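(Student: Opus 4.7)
The plan is to mirror the proof of the analogous statement for $H(a,1)$ with $a>4$, exploiting the special geometric structure of $H(4,1)$. Here $\gamma_j=j$ and $\eta_j=2j+1$ are the unique solutions of the recursion in Lemma~\ref{acp} with $ab=4$, so every real root $x\alpha_1+y\alpha_2$ lies on one of the four parallel lines $2x-y\in\{\pm1,\pm2\}$, with short roots on the first pair and long roots on the second. Equivalently, $|x\alpha_1+y\alpha_2|^2=2(2x-y)^2$, so real-root norms are $2$ or $8$, while norm-$0$ lattice elements lie on $y=2x$ and are the integer multiples of the null root $\alpha_1+2\alpha_2$ (hence imaginary roots).

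For the classification of triples, Theorem~\ref{sums}(iii) rules out configurations with two or three long roots, and a parity argument rules out three short roots: each short root carries a sign $\epsilon=2x-y\in\{\pm1\}$, and three such signs cannot sum to $0$. So exactly one root, say $\gamma$, is long and $\alpha,\beta$ are short with $\alpha+\beta=-\gamma$. The sum $\alpha+\beta$ lies on the line $2x-y=\epsilon_\alpha+\epsilon_\beta$, which is a long-root line only when $\epsilon_\alpha=\epsilon_\beta$; thus both short summands must share the upper or the lower short line. In the upper case $\alpha=\alpha_j^{SU}$, $\beta=\alpha_{j'}^{SU}$, and matching coordinates to $\alpha_m^{LU}=(2m+1)\alpha_1+(4m+4)\alpha_2$ gives $j+j'=2m+1$; setting $k=m$ yields the family $\{\alpha_j^{SU},\alpha_{2k+1-j}^{SU},-\alpha_k^{LU}\}$. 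The lower case is identical, yielding the second family with $m=k+1$.

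For the $p$-values I would combine the root-string identity $p_{\alpha\beta}-q_{\alpha\beta}=\langle\beta,\alpha^\vee\rangle=2(\alpha,\beta)/|\alpha|^2$ with norm checks. For $\alpha,\beta$ both short with $\alpha+\beta$ long, polarizing $|\alpha+\beta|^2=8$ against $|\alpha|^2=|\beta|^2=2$ gives $(\alpha,\beta)=2$, hence $\langle\beta,\alpha^\vee\rangle=2$. Since $|\beta+2\alpha|^2=2+8+8=18\notin\{0,2,8\}$, $\beta+2\alpha$ is not a root, so $q_{\alpha\beta}=1$ and $p_{\alpha\beta}=3$. For the mixed pairs, $(\alpha,\gamma)=(\beta,\gamma)=-4$ yields $\langle\gamma,\alpha^\vee\rangle=-4$ and $\langle\alpha,\gamma^\vee\rangle=-1$; the norm check $|\alpha-\gamma|^2=|\gamma-\alpha|^2=18$ shows neither of these lies in $\Delta$, forcing $p_{\alpha\gamma}=p_{\gamma\alpha}=0$. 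The corresponding values for the pair $(\beta,\gamma)$ follow by the symmetry $\alpha\leftrightarrow\beta$.

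The only mildly subtle step will be the classification, specifically ruling out the mixed "one upper, one lower" short configuration cleanly: there the sum satisfies $\epsilon_\alpha+\epsilon_\beta=0$, so $\alpha+\beta$ lies on the null-root line $y=2x$ and is therefore a nonzero imaginary root rather than a real one. After this geometric observation, the inner-product and norm arithmetic driving the $p$-computations is routine.
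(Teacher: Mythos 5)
Your proposal is correct and follows essentially the same route as the paper, whose proof simply asserts that the result is elementary from the explicit coordinates $\alpha_j^{SU}=j\alpha_1+(2j+1)\alpha_2$, etc., together with the fact that the imaginary roots are the nonzero multiples of $\alpha_1+2\alpha_2$. Your write-up supplies the details the paper omits, organizing the computation around the linear invariant $2x-y$ and the norm $2(2x-y)^2$, and your use of $p_{\alpha\beta}-q_{\alpha\beta}=\langle\beta,\alpha^\vee\rangle$ is a clean way to get $p_{\alpha\beta}=3$ without walking the string through the intervening imaginary root.
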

\begin{proof}
This is elementary using
\begin{align*}
\alpha_j^{LL}&= (2j+1)\alpha_1+4j\alpha_2,& 
\alpha_j^{LU}&= (2j+1)\alpha_1+4(j+1)\alpha_2,\\
\alpha_j^{SU}&= j\alpha_1+(2j+1)\alpha_2,& 
\alpha_j^{SL}&= (j+1)\alpha_1+(2j+1)\alpha_2,
\end{align*}
and the imaginary roots are the nonzero integral multiples of $\a_1+2\a_2$, as shown in Figure~\ref{Hyp}.
\end{proof}

To find a system of structure constants, we first construct the Kac--Moody algebra of type $A_2^{(2)}=H(4,1)$ explicitly,
following \cite{Ca2} and \cite{Mitz}. Let $L^0$ be a Lie algebra of type $A_2$ with Lie bracket $[\cdot,\cdot]_0$, with  basis 
$$E_1,E_2,E_\theta=-[E_1,E_2]_0,\ F_1,F_2,F_\theta=[F_1,F_2]_0,\ H_1=[E_1,F_1]_0,H_2=[E_2,F_2]_0,$$
satisfying $[E_i,H_i]_0=-2E_i$ for $i=1,2$.
Fix invariant form 
$(\cdot,\cdot)_0$ with $( H_1+H_2,H_1+H_2)_0=2$.
Then define ${\mathcal{L}}(L^0) = \C[t,t^{-1}] \otimes L^0$, with cocycle $\kappa: {\mathcal{L}}(L^0)\times{\mathcal{L}}(L^0)\rightarrow \C$ given by 
$$\kappa(a,b) = \text{the coefficient of $t^{-1}$ in $\left( \frac{da}{dt},b\right)_0$}.$$
This defines a central extension 
$\widetilde{\mathcal{L}}(L^0) = {\mathcal{L}}(L^0) \oplus \C c $
with Lie product
$$[a+\lambda c, b+\mu c ] :=
  [a,b]_0 + \kappa(a,b)c .$$
The Kac--Moody Lie Algebra of type $\widetilde{A}^{(2)}_2$ is now the subalgebra $L\subseteq\widetilde{\mathcal{L}}(L^0)$ generated by
$1\otimes L^0$, $t\otimes E_\theta$, and $t^{-1}\otimes F_\theta$.

We can now give explicit coroots and root vectors of $L$ as follows: 
\begin{align*}
\a^\vee_1&=2\otimes(H_1+H_2), & \a^\vee_2&=-1\otimes(H_1+H_2) +c, \\
x^{SU}_r&=\sqrt{2}t^r\otimes\left(E_1+(-1)^rE_2\right), & x^{SL}_r&=\sqrt{2}t^{r+1}\otimes(F_1+(-1)^rF_2), \\
x^{LU}_r&=t^{2r+1}\otimes E_\theta, & x^{LL}_r&=t^{2r+1}\otimes F_\theta. 
\end{align*}
This gives explicit structure constants. For example,
\begin{align*}
[x^{SU}_j,x^{SU}_{2k+1-j}] &= \sqrt{2}t^j\sqrt{2}t^{2k+1-j}\otimes[E_1+(-1)^jE_2,E_1+(-1)^{2k+1-j}E_2]\\
&=2t^{2k+1}\otimes\left((-1)^{2k+1-j}[E_1,E_2]_0+(-1)^j[E_2,E_1]_0\right)\\
&=2t^{2k+1}(-(-1)^{2k+1-j}+(-1)^j)\otimes E_\theta =2t^{2k+1}2(-1)^{j}\otimes E_\theta  =4(-1)^{j}x^{LU}_k,
\end{align*}
and so $n_{\a^{SU}_j,\a^{SU}_{2k+1-j}} =  4(-1)^j.$
The signs of structure constants for our explicit root vectors are
\begin{align*}
s_{\a^{SB}_r,\a^{SB}_s}& =(-1)^r, &s_{\a^{SL}_i,\a^{LU}_j}&=(-1)^{i+1}, &s_{\a^{SU}_i,\a^{LL}_j}&=(-1)^{i+1}.
\end{align*}

We use the following ordering on $\Delta_+^\re$:
\begin{multline*}
\a^{SU}_0<\a^{SU}_1 <\a^{LL}_0<\a^{SL}_0< 
\a^{SL}_1<\a^{LU}_0<\a^{SU}_2<\a^{LL}_1<\a^{SL}_2<\cdots\\
\cdots<\a^{SU}_{2j-1}<\a^{SL}_{2j-1}<\a^{LU}_{j-1}<\a^{SU}_{2j}<\a^{LL}_j<\a^{SL}_{2j}<\cdots,
\end{multline*}
for $j>1$.
Now every positive real root is decomposable except for $\a^{LL}_0=\a_1$, $\a^{SU}_0=\a_2$, and $\a^{SU}_1$.
The special pairs are
\begin{align*}
P = \{ &(\a^{SU}_j,\a^{LL}_k)\mid 0\le j\le2k\text{ or $j=1$}\}\cup\{ (\a^{LL}_k,\a^{SU}_j)\mid j>2k\ge 0\text{ and $j\ne1$}\}\,\cup\\
 \{ &(\a^{SL}_j,\a^{LU}_k)\mid 0\le j\le2k+1\}\cup\{ (\a^{LU}_k,\a^{SL}_j)\mid j>2k+1\ge0\} \,\cup\\
\{ &(\a^{SB}_r,\a^{SB}_s) \mid  B\in\{U,L\}, \;0\le r<s,\; \text{$r+s$ odd}\}.
\end{align*}

The following theorem shows that there is exactly one extraspecial pair corresponding to each decomposable root.
\begin{theorem}\label{twisted}
Let $A=H(4,1)$ and let $\mathfrak{g}=\mathfrak{g}(A)$. 
Then the extraspecial pairs for $\mathfrak{g}$ are, for $k\ge0$,
\begin{align*}
(\a^{SU}_0,\a^{SU}_{2k+1}) &\text{ with sum $\a^{LU}_k$,}  
&(\a^{SL}_0,\a^{SL}_{2k+1}) &\text{ with sum $\a^{LL}_{k+1}$,}\\
(\a^{SU}_0,\a^{LL}_{k})&\text{ with sum $\a^{SL}_{2k}$,} &(\a^{SU}_1,\a^{LL}_{k})&\text{ with sum $\a^{SL}_{2k+1}$,} \\
(\a^{SL}_0,\a^{LU}_{k})&\text{ with sum $\a^{SU}_{2k+2}$,} &(\a^{SL}_1,\a^{LU}_{k})&\text{ with sum $\a^{SU}_{2k+3}$.}
\end{align*}
\end{theorem}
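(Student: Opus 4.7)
The plan is to recast the question as a linear algebra problem over $\F_2$. By the classification of real-root triples preceding the theorem, the decomposable positive real roots are $\a^{LU}_k$ and $\a^{LL}_{k+1}$ ($k\ge 0$), $\a^{SL}_r$ ($r\ge 0$), and $\a^{SU}_r$ ($r\ge 2$); each admits the decomposition listed in the theorem and, for most, further decompositions of the same form. A direct check verifies that every listed pair is special (smaller element first in the lex order) and that the list provides exactly one such pair per decomposable root. I introduce $\F_2$-variables $u_j,v_j,p_k,q_k$ ($j,k\ge 0$) recording the sign rescalings $x^{SU}_j\mapsto(-1)^{u_j}x^{SU}_j$, $x^{SL}_j\mapsto(-1)^{v_j}x^{SL}_j$, $x^{LU}_k\mapsto(-1)^{p_k}x^{LU}_k$, and $x^{LL}_k\mapsto(-1)^{q_k}x^{LL}_k$. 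Every such rescaling preserves (\ref{rel-Chev})--(\ref{rel-0}) and alters each sign $s_{\a,\b}$ by the $\F_2$-sum of the three variables corresponding to $\a$, $\b$, and $\a+\b$; thus every special pair produces a linear equation in these variables.

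To show each listed pair is extraspecial I would exhibit, for each of the six families in the statement, an explicit $\F_2$-assignment of $(u,v,p,q)$ that flips only the target sign among all the listed pairs. For the first listed pair in the first family, the assignment $u_j=1$ for $j\ge 2$, $p_k=1$ for all $k\ge 0$, and all other variables zero, works by a direct substitution into the six family equations. For the remaining families a similar but somewhat more elaborate assignment (involving nonzero $q_k$'s with prescribed parity in $k$) does the job, constructed by solving the corresponding $\F_2$-linear system. Hence no listed sign lies in the $\F_2$-span of the others, so by transfinite induction along the lex ordering each listed pair is extraspecial.

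To show no other special pair is extraspecial I would express the sign-change equation of each non-listed pair as an $\F_2$-linear combination of the equations for the listed pairs. As a sample, for the non-listed pair $(\a^{SU}_1,\a^{SU}_2)$ with sum $\a^{LU}_1$, chaining through the listed equations for $(\a^{SU}_0,\a^{SU}_1)$, $(\a^{SU}_0,\a^{SU}_3)$, $(\a^{SU}_0,\a^{LL}_0)$, $(\a^{SU}_1,\a^{LL}_0)$, $(\a^{SL}_0,\a^{LU}_0)$, and $(\a^{SL}_1,\a^{LU}_0)$ yields $u_3=v_0$, whence $u_1+u_2+p_1=v_0+u_3=0$. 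An analogous but longer chain, organized by induction on the height of the common sum, handles every non-listed decomposition of $\a^{LU}_k$, $\a^{LL}_{k+1}$, $\a^{SL}_r$, or $\a^{SU}_r$; hence by transfinite induction such pairs are never extraspecial.

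The main obstacle is carrying out these chains of substitutions systematically over all non-listed special pairs. In the finite-dimensional and untwisted settings, Lemma~\ref{lengths}\ref{len-quadruple} (the quadruple identity) would supply the required relations directly, but in $A_2^{(2)}$ the pairwise differences among short roots summing to the same long root are imaginary (multiples of $\delta=\a_1+2\a_2$), so the hypotheses of that lemma fail. The sign dependencies must instead be extracted from the interlocking structure of the six listed families: every extra decomposition of a decomposable root is forced by a loop through several neighbouring listed equations, and the technical heart of the proof is organizing these loops into a uniform induction.
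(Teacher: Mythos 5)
Your framework is the one the paper actually uses: every admissible system of signs differs from the explicit one (coming from the loop--algebra realization of $A_2^{(2)}$) by $\pm1$ rescalings of the root vectors, a rescaling changes the sign of a pair $(\a,\b)$ by the $\F_2$-sum of its values at $\a$, $\b$ and $\a+\b$, and the whole question becomes linear algebra over $\F_2$. Your two sample computations check out: the assignment $u_j=1$ for $j\ge2$, $p_k=1$ for all $k$ flips exactly the sign of $(\a^{SU}_0,\a^{SU}_1)$ among the listed pairs, and $u_1+u_2+p_1$ is the sum of the five listed equations attached to $(\a^{SU}_0,\a^{SU}_3)$, $(\a^{SU}_0,\a^{LL}_0)$, $(\a^{SU}_1,\a^{LL}_0)$, $(\a^{SL}_0,\a^{LU}_0)$, $(\a^{SL}_1,\a^{LU}_0)$. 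You also correctly identify why Lemma~\ref{lengths}\ref{len-quadruple} is unavailable (the differences of the short summands are multiples of the null root $\a_1+2\a_2$). Two cautions: for the ``determined'' direction you must also invoke the fact that \emph{every} admissible sign system arises by rescaling (one-dimensionality of real root spaces plus (\ref{rel-Chev}) and (\ref{rel-neg}) force the rescaling constants into $\{\pm1\}$), since otherwise linear dependencies among rescaling equations prove nothing; and your chain for $(\a^{SU}_1,\a^{SU}_2)$ uses the pairs $(\a^{SL}_0,\a^{LU}_0)$ and $(\a^{SL}_1,\a^{LU}_0)$, which are lexicographically \emph{larger} in the paper's pair ordering, so for the transfinite induction to close you should order special pairs by their sum first (their sums $\a^{SU}_2,\a^{SU}_3$ do precede $\a^{LU}_1$).

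The genuine gap is that the two infinite families of verifications you defer --- a flipping assignment for each of the six families of listed pairs, and a chain of listed equations for every non-listed special pair --- are the entire content of the theorem, and ``a similar but somewhat more elaborate assignment'' and ``an analogous but longer chain, organized by induction on the height of the common sum'' is where the actual work lives; nothing you have written guarantees these chains exist uniformly. The paper closes this gap globally rather than pair by pair: it packages all rescalings into a single linear map $\phi$ from $\F_2^{\Delta^\re_+}$ to sign changes on special pairs, exhibits three explicit kernel elements $\lambda^S,\lambda^L,\lambda_1$ (supported on parity classes of roots much like the assignments you anticipate), shows they are independent by restricting to the three indecomposable roots $\a^{LL}_0,\a^{SU}_0,\a^{SU}_1$, and then, for an \emph{arbitrary} prescription of signs on the listed pairs, constructs by a one-step recursion ($r^{LB}_{b+k}=r^{SB}_{2k+1}s^{LB}_{b+k}$, etc.) the unique rescaling vanishing on those three roots that realizes it. That single construction simultaneously yields both of your steps --- freeness of the listed signs and determination of all the others --- and is what replaces the case-by-case chains you would otherwise have to organize. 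To complete your proof you would either need to carry out that uniform kernel/section computation or supply a genuinely uniform description of your flipping assignments and equation chains for all six families and all $k$.
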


\begin{proof} 
Given $B\in\{U,L\}$, we define $b=0$ and $B'=L$ when $B=U$; $b=1$ and $B'=U$ when $B=L$. So
$$\a^{SB}_{2i}+\a^{SB}_{2j+1}=\a^{LB}_{b+i+j},\qquad
 \a^{SB'}_{2i+1}+\a^{LB}_{k-i}=\a^{SB}_{2(b+k)+1},\qquad
 \a^{SB'}_{2i}+\a^{LB}_{k-i}=\a^{SB}_{2(b+k)}.$$

The assumption that $\omega(x_{\a}) = -x_\a$ ensures that every possible system of signs of structure constants on real root vectors
can be achieved by  
 swapping the signs of the basis elements $x_\a$ for $\a\in\Delta^\re_+$,  provided that we do the same sign swaps to  $x_{-\a}$ for $\a\in\Delta^\re_+$.  
Let $\mathcal{R}$ be the set of all functions $\Delta^\re_+\rightarrow\Z_2$.
Given $\rho\in \mathcal{R}$, we  write $r^{HB}_j:=(-1)^{\rho(\a^{HB}_j)}$ and define  new root vectors
\begin{align*}
\bar{x}^{HU}_j&=r^{HU}_jx^{HU}_j, &\bar{x}^{HL}_j&=r^{HL}_jx^{HL}_j, &
\bar{x}^{HL}_{-j-1}&=r^{HU}_jx^{HL}_{-j-1},& \bar{x}^{HU}_{-j-1}&=r^{HL}_jx^{HU}_{-j-1},
\end{align*}
for $j\ge0$, $H\in\{S,L\}$,  $B\in\{U,L\}$.

Rather than use the special pairs $P$, we use a modified set
$$
P' = \{ (\a^{SU}_j,\a^{LL}_k)\mid j,k\ge0\}\cup
 \{ (\a^{SL}_j,\a^{LU}_k)\mid j,k\ge0\}\cup\{ (\a^{SB}_r,\a^{SB}_s) \mid  B\in\{U,L\}, \;0\le r<s,\; \text{$r+s$ odd}\}.
$$
This modification doesn't change the proof, apart from swapping the relevant signs, none of which are extraspecial.

Let $\mathcal{S}$ be the set of all functions $P'\rightarrow\Z_2$, and define a linear map
$\phi:\mathcal{R}\rightarrow\mathcal{S},\rho\mapsto \bar{\rho}$ by
$$\bar{\rho}(\a,\b) = \rho(\a)+\rho(\b) + \rho(\a+\b)$$ 
for all $(\a,\b)\in P'$.
Then $\bar{\rho}$ determines the signs of structure constants  as follows:
\begin{align*}
\bar{s}_{\a^{SB}_r,\a^{SB}_s}& =(-1)^{r+\bar{\rho}(\a^{SB}_r,\a^{SB}_s)}, &\bar{s}_{\a^{SL}_i,\a^{LU}_j}&=(-1)^{i+1+\rho(\a^{SL}_i,\a^{LU}_j)}, &\bar{s}_{\a^{SU}_i,\a^{LL}_j}&=(-1)^{i+1+\rho(\a^{SU}_i,\a^{LL}_j)}.
\end{align*}

Define the following elements of $\mathcal{R}$:
\begin{align*}
\lambda^S(\a)=1&\;\text{ if and only if }\;\a\in\{\a_{j}^{SB}\mid B\in\{U,L\},\; j\ge0\};\\
\lambda^L(\a)=1&\;\text{ if and only if }\;\a\in\{\a_{j}^{LB},\a_{2j+b}^{SB}\mid B\in\{U,L\},\; j\ge0\};\\
\lambda_1(\a)=1&\;\text{ if and only if }\;\a\in\{\a_{2j+1}^{LL},\a_{2j}^{LU},\a_{4j+1}^{SB},\a_{4j+2}^{SB}\mid B\in\{U,L\},\; j\ge0\}.
\end{align*}
Let $X=\{\lambda^S,\lambda^L,\lambda_1\}$. 
It is easily shown that each element of $X$ maps to $0$ under $\phi$,
so $\vspan(X)\subseteq\ker(\phi)$.
If we restrict the elements of $X$ to the set of roots $Y=\{\a^{LL}_0, \a^{SU}_0, \a^{SU}_1\}$,
they are linearly independent.
Hence $X$ is linearly independent and $\mathcal{R}=\vspan(X)\oplus\mathcal{R}'$, where
$\mathcal{R}'$ consists of all $\rho\in\mathcal{R}$ that take the value $0$ on every element of $Y$.

Suppose now that we are given arbitrary signs
$s^{LB}_{j+b}$ and $s^{SB'}_{j+2b}$, for $B\in\{U,L\}$,  $j\ge0$  (that is, one sign for each root in $\Delta^\re_+-Y$).
A unique  $\rho\in\mathcal{R}'$ is determined by defining the values $r^{HB}_j=(-1)^{\rho(\a^{HB}_j)}$ inductively as
\begin{align*}
r^{LL}_0&=1,& r^{SL}_0&=1,& r^{SL}_1&=1,\\
r^{LB}_{b+k}&=r^{SB}_{2k+1}s^{LB}_{b+k} ,& 
r^{SB'}_{2k}&=r^{LB}_{b+k}s^{SB'}_{2k},& 
r^{SB'}_{2k+1}&=r^{LB}_{b+k}s^{SB'}_{2k+1},
\end{align*}
for $B\in\{U,L\}$,   $k>0$.
Hence $\phi$ restricted to $\mathcal{R}'$ is one-to-one, and so $\vspan(X)=\ker(\phi)$.
This choice of $\rho$ gives us free choice of the extraspecial signs 
\begin{align*}
\bar{s}_{\a^{SB}_0,\a^{SB}_{2k+1}}&=s^{LB}_{k+b},&
\bar{s}_{\a^{SB}_0,\a^{LB'}_k}&=-s^{SB'}_{2(k+b)} &
\bar{s}_{\a^{SB}_0,\a^{LB'}_k}&= s^{SB'}_{2(k+b)+1}
\end{align*}
\end{proof}
Theorem~\ref{T-sc}\ref{T-sc-H41} is an immediate corollary of this result.

\newpage

\newpage

\begin{appendix}

\section{Explicit commutator relations}
We can now give all commutator relations involving only real root vectors in rank 2 Kac-Moody algebras. Recall that we write $x_j^{LU}$ instead of $x_{\a_j^{LU}}$, and so on. Given $B\in\{U,L\}$ we define
$$b=\begin{cases} 0&\text{for $B=U$,}\\1&\text{for $B=L$,}\end{cases}\qquad 
B'=\begin{cases} L&\text{for $B=U$,}\\U&\text{for $B=L$.}\end{cases}$$

\begin{corollary} Let $A=H(a,b)$ with $a,b>1$, $ab\geq 4$. The relations in $\mathfrak{g}=\mathfrak{g}(A)$ among pairs of 
real root vectors whose sum is neither imaginary nor zero are:
$$[x_j^{SB},x_j^{SB}]=[x_j^{SB},x_j^{LB}]=[x_j^{LB},x_j^{SB}]=[x_j^{LB},x_j^{LB}]=0,$$
for all $j\in\Z$ and $B\in\{U,L\}$.
\end{corollary}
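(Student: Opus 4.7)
The corollary is essentially a packaging of results already proved: it asserts that for $H(a,b)$ with $a,b>1$ and $ab\ge 4$, every commutator of two real root vectors whose sum is neither zero nor imaginary vanishes, and then specifically exhibits the four shapes of such commutators (labelled by the orbit types $S,L$ and by the branch $B\in\{U,L\}$). My plan is to invoke Proposition~\ref{sumbne1} and relation~(\ref{rel-0}) directly, together with a case-check that the listed pairs indeed have a sum which is not a root.

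First I would recall that $\Delta=\Delta^\re\sqcup\Delta^\im\sqcup\{0\}$, so the hypothesis that $\a+\b$ is neither imaginary nor zero means $\a+\b\in\Delta\cup\{0\}$ if and only if $\a+\b\in\Delta^\re$. Then Proposition~\ref{sumbne1} tells us that in the regime $a\ge b>1$, $ab\ge4$, no sum of two real roots is itself a real root. Combining these two observations, for any $\a,\b\in\Delta^\re$ with $\a+\b$ not imaginary and not zero we have $\a+\b\notin\Delta\cup\{0\}$, and relation~(\ref{rel-0}) then yields $[x_\a,x_\b]=0$.

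It remains to check that the four displayed commutators fall under this general statement, i.e.\ that their root sums are not imaginary and not zero. Using Lemma~\ref{acp}, the sums are
\begin{align*}
\a_j^{SB}+\a_j^{SB}&=2b\gamma_{j+b}\a_1+2\eta_j\a_2,\\
\a_j^{SB}+\a_j^{LB}&=(b\gamma_{j+b}+\eta_j)\a_1+(\eta_j+a\gamma_{j+b})\a_2,\\
\a_j^{LB}+\a_j^{LB}&=2\eta_j\a_1+2a\gamma_{j+b}\a_2.
\end{align*}
None of these is $0$, and using $|\a_1|^2=2a/b$, $|\a_2|^2=2$, $(\a_1,\a_2)=-a$ together with the fact that $|\a_j^{SB}|^2=2$ and $|\a_j^{LB}|^2=2a/b$, one checks that each of the above vectors has strictly positive norm squared (being a sum of two real roots at a non-obtuse angle when $a,b>1$). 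Hence none is an imaginary root. The corollary then follows from the previous paragraph. (Note that $[x_j^{SB},x_j^{SB}]=0$ and $[x_j^{LB},x_j^{LB}]=0$ also follow from skew-symmetry; listing them is just for uniformity.)

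There is no real obstacle: the content lives in Proposition~\ref{sumbne1}, and the corollary is a one-line consequence together with the routine check that the explicit sums above are nonzero and have positive norm, hence are not roots at all.
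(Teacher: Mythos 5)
Your proof is correct and takes essentially the same route as the paper: the corollary is an immediate consequence of Proposition~\ref{sumbne1} (no sum of two real roots is real when $a\ge b>1$) combined with relation~(\ref{rel-0}), which is exactly what you invoke, and your explicit norm computations verifying that the listed sums are nonzero and non-imaginary are a harmless supplementary check.
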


\begin{proposition}\label{P-allsc-Ha1}  Let $A=H(a,1)$ with $a>4$. 
The relations in $\mathfrak{g}=\mathfrak{g}(A)$ among pairs of 
real root vectors whose sum is neither imaginary nor zero are 
\begin{align*}
[x^{SU}_j,x^{SU}_{j+1}] &= 
  \begin{cases} s^U_jax^{LU}_j & \text{for $j\ge0$,}\\ -s^L_{-j-1}ax^{LU}_{j}& \text{for $j<0$}; \end{cases}&
[x^{SL}_j,x^{SL}_{j+1}]  &=
  \begin{cases} s^L_jax^{LL}_j & \text{for $j\ge0$,}\\ -s^U_{-j-1}ax^{LL}_{j}& \text{for $j<0$}; \end{cases}\\
[x^{SU}_j,x^{LL}_{-j-1}]  &=
  \begin{cases} s^U_jx^{SL}_{-j-2} & \text{for $j\ge0$,}\\ -s^L_{-j-1}x^{SL}_{-j-2}& \text{for $j<0$}; \end{cases}&
[x^{SL}_j,x^{LU}_{-j-1}]  &=\begin{cases} s^U_jx^{SU}_j & \text{for $j\ge0$,}\\ -s^L_{-j-1}x^{SU}_{j}& \text{for $j<0$}; \end{cases}
\end{align*}
\begin{align*}
[x^{SH}_j,x^{SH}_{k}] &= 0 \qquad \text{for $j,k\in\Z$, $|j-k|>1$, $H\in\{L,U\}$};\\
[x^{LH}_j,x^{LH}_{k}] =[x^{SH}_j,x^{LH}_{k}] &= 0\qquad  \text{for $j,k\in\Z$, $H\in\{L,U\}$;}
\end{align*}
together with the relations obtained by swapping the order of these commutators.
\end{proposition}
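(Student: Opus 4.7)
The plan is to derive the formulas from Proposition~\ref{triples}, Theorem~\ref{T-Ha1signs}, and Lemma~\ref{lengths}. First I would enumerate all pairs $(\a,\b)\in\Delta^{\re}\times\Delta^{\re}$ with $\a+\b\in\Delta^{\re}$: by Proposition~\ref{triples}, each such pair arises from one of the two triple families $\{\a^{SU}_j,\a^{SU}_{j+1},-\a^{LU}_j\}$ or $\{\a^{SL}_j,\a^{SL}_{j+1},-\a^{LL}_{j+1}\}$ by choosing two of the three roots. Rewriting the negative roots in our $LL/LU/SU/SL$ notation via Lemma~\ref{L-neg} produces exactly the commutator types listed in the proposition. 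All remaining pairs satisfy $\a+\b=0$, $\a+\b\in\Delta^{\im}$, or $\a+\b\notin\Delta$, so $[x_\a,x_\b]=0$ by relations~(\ref{rel-neg})--(\ref{rel-0}).

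Next I would determine the magnitudes. The theorem immediately preceding Theorem~\ref{T-Ha1signs} computes that the $\a$-string through $\b$ has $p=a-1$ for a pair of short roots in one of these triples and $p=0$ for the mixed short-long pairs. By Lemma~\ref{L-nab} this gives $|n_{\a^{SU}_j,\a^{SU}_{j+1}}|=|n_{\a^{SL}_j,\a^{SL}_{j+1}}|=a$ and $|n|=1$ for the mixed pairs. For the signs, Theorem~\ref{T-Ha1signs} identifies $s^{UL}$, $s^U_j$, $s^L_j$ ($j\ge 0$) as the extraspecial signs, which may be prescribed freely and which determine all other signs. Within each triple $\{\a,\b,\g\}$ summing to zero, the chain of identities
$$s_{\a,\b}=s_{\g,\a}=s_{\b,\g},\qquad -s_{\a,\b}=s_{\a,\g}=s_{\g,\b},$$
coming from Lemma~\ref{lengths}(i), combined with the length-ratio formula of Lemma~\ref{lengths}(ii), expresses all six signed structure constants on the triple as explicit multiples of any chosen one. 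Starting from $n_{\a^{SU}_j,\a^{SU}_{j+1}}=s^U_j\,a$ and $n_{\a^{SL}_j,\a^{SL}_{j+1}}=s^L_j\,a$, together with the length ratio $|\a^{SU}|^2:|\a^{LU}|^2=1:a$, this pins down the mixed-commutator formulas in the $j\ge 0$ clauses of the proposition.

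Finally, for $j<0$, I would reduce to the $j\ge 0$ case through Lemma~\ref{L-neg}: for instance $\a^{SU}_j=-\a^{SL}_{-j-1}$ for $j<0$, and the analogous identities for the other three families. The identity $n_{-\a,-\b}=n_{\a,\b}$ from Lemma~\ref{lengths}(i) then re-expresses each $j<0$ bracket as a $j\ge 0$ bracket of the swapped $U/L$ flavor, which accounts for the interchange of $s^U$ and $s^L$ and the extra minus sign in the $j<0$ clauses. The main obstacle is the sign bookkeeping: each commutator family must be verified separately for both sign regimes of $j$, tracking how the length-ratio identity, the skew-symmetry of Lemma~\ref{lengths}(i), and the negation identities of Lemma~\ref{L-neg} combine to yield the stated coefficients. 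Each individual verification reduces to a short identity in $\mathbb{Z}_2$, but the proliferation of cases makes the proof mechanical rather than conceptually deep.
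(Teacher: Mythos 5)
Your overall strategy is the one the paper itself relies on: this proposition appears in the appendix with no written proof, and is meant to follow by combining Proposition~\ref{triples} (enumeration of triples summing to zero), the root-string theorem preceding Theorem~\ref{T-Ha1signs} together with Lemma~\ref{L-nab} (magnitudes $a$ and $1$), Theorem~\ref{T-Ha1signs} and the sign chain derived from Lemma~\ref{lengths} (signs), and Lemma~\ref{L-neg} with $n_{-\a,-\b}=n_{\a,\b}$ (reduction of $j<0$ to $j\ge0$). So there is no methodological disagreement with the paper.

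The gap is that the one step you assert rather than perform --- that the enumeration ``produces exactly the commutator types listed in the proposition'' --- is where all the content lives, and it does not check out. Each triple $\{\a^{SU}_j,\a^{SU}_{j+1},-\a^{LU}_j\}$ contributes \emph{three} unordered pairs with real sum: besides $\{\a^{SU}_j,\a^{SU}_{j+1}\}$ and $\{\a^{SU}_j,\a^{LL}_{-j-1}\}$ one also gets $\{\a^{SU}_{j+1},\a^{LL}_{-j-1}\}$, i.e.\ after reindexing the family $\a^{SU}_k+\a^{LL}_{-k}=\a^{SL}_{-k}$; this is not of the form $[x^{SU}_j,x^{LL}_{-j-1}]$ for any $j$, and is not absorbed by the vanishing relations, which require the same superscript $H$ on both factors. (Concretely, in $H(5,1)$ one has $\a^{SU}_1+\a^{LL}_{-1}=(\a_1+4\a_2)+(-\a_1-5\a_2)=-\a_2\in\Delta^\re$.) The second triple family likewise yields the unlisted pairs $\a^{SL}_j+\a^{LU}_{-j-2}=\a^{SU}_{-j-2}$. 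Carrying out the same computation on the listed pairs also gives $\a^{SL}_j+\a^{LU}_{-j-1}=-\gamma_j\a_1-\eta_{j-1}\a_2=\a^{SU}_{-j}$ rather than $\a^{SU}_j$, and the chain $s_{\a,\g}=-s_{\a,\b}$ from Lemma~\ref{lengths} yields $[x^{SU}_j,x^{LL}_{-j-1}]=-s^U_j\,x^{SL}_{-j-2}$ for $j\ge0$, opposite to the displayed sign. In short, executing your plan faithfully does not reproduce the statement verbatim; you must actually perform the enumeration and either account for these extra families and index/sign discrepancies or show how they reduce to the listed relations. As written, the completeness and sign claims are taken on faith at exactly the point where they fail to match the target.
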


\begin{proposition}\label{P-allsc-H41}  Let $A=H(4,1)$. 
The relations in $\mathfrak{g}=\mathfrak{g}(A)$ among pairs of 
real root vectors whose sum is neither imaginary nor zero are 
\begin{align*}
[x^{SB}_{2j},x^{SB}_{2k+1}] &= 
  \begin{cases} r^{SB}_{2j}r^{SB}_{2k+1}r^{LB}_{b+j+k}4x^{LB}_{b+j+k} & \text{for $j, k\ge 0$,}\\
     r^{SB}_{-2j+1}r^{SB}_{2k+1}r^{LB}_{b+j+k}4x^{LB}_{b+j+k}& \text{for $0<-j\le k$},\\ 
     r^{SB}_{2j}r^{SB}_{-2k}r^{LB}_{1-b-j-k}4x^{LB}_{b+j+k}& \text{for $0\le j < -k$,}\\
     r^{SB}_{-2j+1}r^{SB}_{-2k}r^{LB}_{1-b-j-k}4x^{LB}_{b+j+k}& \text{for $0< -j < -k$;}  \end{cases}\\
[x^{SB'}_{2i},x^{LB}_{j}]  &=
  \begin{cases} -r^{SB'}_{2j}r^{LB}_{k}r^{SB}_{2(b+k-j)}  x^{SB}_{2(b+k-j)} & \text{for $j,k\ge0$,}\\ 
     -r^{SB'}_{-2j+1}r^{LB}_{k}r^{SB}_{2(b+k-j)}  x^{SB}_{2(b+k-j)} & \text{for $0<-j\le k$},\\ 
     -r^{SB'}_{2j}r^{LB}_{1-k}r^{SB}_{2(b+k-j)}  x^{SB}_{1-2(b+k-j)} & \text{for $0\le j < -k$,}\\
     -r^{SB'}_{-2j+1}r^{LB}_{1-k}r^{SB}_{2(b+k-j)}  x^{SB}_{1-2(b+k-j)} & \text{for $0< -j < -k$;}  \end{cases}
\end{align*}
together with the relations obtained by swapping the order of these commutators; all other such commutators are equal to 0.
Here
$$t_k=\prod_{i=1}^{k}s^{LL}_i s^{LU}_{i-1} s^{SL}_{2i-1} s^{SU}_{2i-1},$$
\begin{align*}
r^{LL}_k&=t_k, &r^{LU}_k&=t_ks^{LU}_ks^{SU}_{2k+1},\\
r^{SL}_{2k} &= t_ks^{LU}_ks^{SU}_{2k+1}s^{SL}_{2k},  & r^{SU}_{2k} &= t_{k+1}s^{SU}_{2k},\\
r^{SL}_{2k+1} &= t_ks^{LU}_ks^{SU}_{2k+1}s^{SL}_{2k+1},  & r^{SU}_{2k+1} &= t_{k+1}s^{SU}_{2k+1}.
\end{align*}
\end{proposition}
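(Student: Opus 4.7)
The plan is to combine the explicit twisted-vertex-operator realization of $\mathfrak{g}=\mathfrak{g}(H(4,1))$ from Section~6 with the change-of-basis machinery used in the proof of Theorem~\ref{twisted}. By Theorem~6.1, every pair of real roots $(\alpha,\beta)$ with $\alpha+\beta\in\Delta^{\re}$ is either a short--short pair summing to a long root (with $n_{\a,\b}=\pm4$) or a short--long pair summing to a short root (with $n_{\a,\b}=\pm1$), and the six families of extraspecial pairs in Theorem~\ref{twisted} cover each decomposable positive real root exactly once. Hence the relations listed in the proposition exhaust all nontrivial commutators of the required form, and once the signs on the six extraspecial families are fixed, parts~\ref{len-swap} and~\ref{len-triple} of Lemma~\ref{lengths} force the signs on every other pair.

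In the explicit basis of Section~6 I would first record the identity $[x^{SB}_i,x^{SB}_{2k+1-i}]=4(-1)^i x^{LB}_{b+k}$ (the $B=U$ case is computed explicitly in Section~6 and the $B=L$ case is identical), and then apply part~\ref{len-triple} of Lemma~\ref{lengths} to each triple $(\a^{SB}_i,\a^{SB}_{2k+1-i},-\a^{LB}_{b+k})$ to obtain the associated short--long--short structure constants. This gives a complete initial system in which every extraspecial sign is an explicit numerical value, e.g.\ $s_{\a^{SB}_0,\a^{SB}_{2k+1}}=+1$.

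I would then perform the diagonal basis change $\tilde x^{HB}_j=r^{HB}_j x^{HB}_j$, together with the matching sign swaps on the negative-root vectors forced by $\omega(\tilde x_\alpha)=-\tilde x_{-\alpha}$. Each triple structure constant then picks up the factor $r_\alpha r_\beta r_{\alpha+\beta}$. The inductive formulas for $t_k$ and $r^{HB}_k$ displayed in the proposition are precisely the values produced by the construction in the proof of Theorem~\ref{twisted}: they encode the unique element of $\mathcal{R}'$ that transports the explicit Section~6 extraspecial signs to any prescribed system of signs at the six extraspecial families. Substituting these $r$'s into the transformed commutators and simplifying absorbs the $(-1)^i$ factors and produces the clean formulas in the proposition.

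The main obstacle is the four-way case split by the signs of $j$ and $k$. When $j<0$, the vector $x^{SB}_{2j}$ is a negative-root vector, and by Lemma~\ref{L-neg} we have $\a^{SB}_{2j}=-\a^{SB'}_{-2j-1}$, so the basis-change factor attached to $x^{SB}_{2j}$ must be read off from the positive-root vector $x^{SB'}_{-2j-1}$; tracking this through explains the label swap $B\leftrightarrow B'$ (which is suppressed in the statement by a re-indexing) and accounts for the appearance of $r^{SB}_{-2j+1}$ in place of $r^{SB}_{2j}$ and of $r^{LB}_{1-b-j-k}$ in place of $r^{LB}_{b+j+k}$ in the lower sub-cases. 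The actual content of these cases is a parity check: the $(-1)^i$ factors from the Lepowsky construction, the antisymmetry $n_{\beta,\alpha}=-n_{\alpha,\beta}$ needed whenever the commutator is presented in the ``wrong'' order, and the $r$-factors from the basis change must combine to leave no residual $(-1)^{\cdot}$. This is routine but delicate sign bookkeeping and constitutes the bulk of the work.
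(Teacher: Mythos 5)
Your proposal is correct and follows essentially the same route as the paper: Proposition~\ref{P-allsc-H41} is a direct consequence of the explicit loop-algebra realization in Section~6 together with the diagonal basis change $\bar x^{HB}_j=r^{HB}_jx^{HB}_j$ constructed in the proof of Theorem~\ref{twisted}, and your outline reproduces exactly that derivation, including the use of Lemma~\ref{L-neg} and the $B\leftrightarrow B'$ relabelling for negative indices. One small caution: the $B=L$ short--short computation is not literally identical to the $B=U$ one, because $E_\theta=-[E_1,E_2]_0$ while $F_\theta=+[F_1,F_2]_0$, so an extra sign must be tracked there; this falls under the bookkeeping you already flag as the bulk of the work.
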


\end{appendix}

\end{document}